\newcommand{\numberset}{\mathbb}
\newcommand{\R}{\numberset{R}}
\newcommand{\N}{\numberset{N}}
\newcommand{\Z}{\numberset{Z}}
\newcommand{\lnorm}{\left\Arrowvert}
\newcommand{\rnorm}{\right\Arrowvert}
\newcommand{\LBrace}{\left\lbrace}
\newcommand{\RBrace}{\right\rbrace}
\newcommand{\lAbs}{\left|}
\newcommand{\rAbs}{\right|}
\theoremstyle{plain}
\newtheorem{thm}{Theorem}[section]
\newtheorem{proposition}[thm]{Proposition}
\newtheorem{lemma}[thm]{Lemma}
\theoremstyle{definition}
\newtheorem{definition}[thm]{Definition}
\def\Xint#1{\mathchoice 
	{\XXint\displaystyle\textstyle{#1}}% 
	{\XXint\textstyle\scriptstyle{#1}}% 
	{\XXint\scriptstyle\scriptscriptstyle{#1}}% 
	{\XXint\scriptscriptstyle\scriptscriptstyle{#1}}% 
	\!\int} 
\def\XXint#1#2#3{{\setbox0=\hbox{$#1{#2#3}{\int}$} 
		\vcenter{\hbox{$#2#3$}}\kern-.5\wd0}} 
\def\Mint{\Xint -}
\def\div{{\rm div}}
\def\R{\mathbb{R}}
\numberwithin{equation}{section} \makeatletter
\renewcommand{\p@enumi}{\thesection.}
\title{\textbf{Higher differentiability results for solutions to a class of non-autonomous obstacle problems with sub-quadratic growth conditions}}
\author{Andrea Gentile}
\begin{document}

		\maketitle
			\begin{center}
				Università degli Studi di Napoli "Federico II", Dipartimento di Mat. e Appl. "R. Caccioppoli", Via Cintia, 80126 Napoli, Italy\\
				E-mail address: \url{andrea.gentile@unina.it}\\
				ORCID iD: \url{https://orcid.org/0000-0002-9830-6272}
			\end{center}

	\begin{abstract}
		\noindent We establish some higher differentiability results of integer and fractional order for solution to non-autonomous obstacle problems of the form
		
		\begin{equation*}
		\min \left\{\int_{\Omega}f(x, Dv(x))\,:\, v\in
		\mathcal{K}_\psi(\Omega)\right\},
		\end{equation*}
		
		\noindent where the function $f$ satisfies $p-$growth conditions with respect to the gradient variable, for $1<p<2$, and $\mathcal{K}_\psi(\Omega)$ is the class of admissible functions $v\in u_0+W^{1, p}_0(\Omega)$ such that $v\ge\psi$ a. e. in $\Omega$, where $u_0\in W^{1,p}(\Omega)$ is a fixed boundary datum.\\
		Here we show that a Sobolev or Besov-Lipschitz regularity assumption on the gradient of the obstacle $\psi$ transfers to the gradient of the solution, provided the partial map $x\mapsto D_\xi f(x,\xi)$ belongs to a suitable Sobolev or Besov space. The novelty here is that we deal with subquadratic growth conditions with respect to the gradient variable, i. e. $f(x, \xi)\approx a(x)|\xi|^p$ with $1<p<2,$ and where the map $a$ belongs to a Sobolev or Besov-Lipschitz space.
	\end{abstract}
	\noindent {\footnotesize{{\bf AMS Classifications.} 35J87; 
		49J40; 47J20.}}
	
	\bigskip
	\noindent {\footnotesize{{\bf Key words and phrases.} Obstacle problems; Higher differentiability; Sobolev coefficients; Besov-Lipschitz coefficients.}}
	\bigskip
	\section{Introduction}
	
	We are interested in the regularity properties of solutions to problems of the form
	
	\begin{equation}\label{functionalobstacle}
		\min \left\{\int_{\Omega}f(x, Dv(x))\,:\, v\in
		\mathcal{K}_\psi(\Omega)\right\},
	\end{equation}
	where $\Omega\subset\R^n$ is a bounded open set, $n>2$, $f:\Omega\times\R^n\to\R$ is a Carath\'{e}odory map, such that $\xi\mapsto f(x, \xi)$ is of class $C^2(\R^n)$ for a.e. $x\in\Omega$, $\psi: \Omega
	\mapsto [-\infty, +\infty)$ belonging to the Sobolev class $ W^{1,p}_{\mathrm{loc}}$ is the \emph{obstacle}, and
	
	$$\mathcal{K}_\psi(\Omega)=\left\lbrace v\in u_0+W^{1, p}_0(\Omega, \R): v\ge\psi \text{ a.e. in }\Omega\right\rbrace$$
	
	is the class of the admissible functions, with $u_0\in W^{1, p}(\Omega)$ a fixed boundary datum.
	
	\bigskip

	Let us observe that $u\in W^{1,p}_{\mathrm{loc}}(\Omega)$ is a
	solution to the obstacle problem \eqref{functionalobstacle} in
	$\mathcal{K}_\psi(\Omega)$ if and only if $u \in
	\mathcal{K}_\psi(\Omega)$ and $u$ is a solution to the variational
	inequality
	
	\begin{equation}\label{variationalinequality}
		\int_{\Omega}\left<A(x, Du(x)), D(\varphi(x)-u(x))\right>dx\ge0\qquad\forall
		\varphi\in \mathcal{K}_\psi(\Omega),
	\end{equation}
	where the operator $A: \Omega\times\R^n\to\R^n$ is defined
	as follows
	
	\begin{equation*}
		A_i(x, \xi)=D_{\xi_i}f(x, \xi)\qquad\forall i=1,...,n.
	\end{equation*}
	
	We assume that $A$ is a $p$-harmonic type operator, that is it satisfies the following $p$-ellipticity and $p$-growth conditions with respect to
	the $\xi$-variable. There exist positive constants $\nu, L, \ell$
	and an exponent $1<p\le2$ and a parameter $0\le \mu\le 1$  such that
	
	\begin{equation}\label{p-ellipticityA1}
	\left<A(x, \xi)-A(x, \eta), \xi-\eta\right>\ge\nu|\xi-\eta|^2\left(\mu^2+|\xi|^2+|\eta|^2\right)^\frac{p-2}{2},
	\end{equation}

	\begin{equation}\label{p-growtA2}
	\left|A(x, \xi)-A(x, \eta)\right|\le L|\xi-\eta|\left(\mu^2+|\xi|^2+|\eta|^2\right)^\frac{p-2}{2},
	\end{equation}
	
	\begin{equation}\label{p-growthA3}
	\left|A(x, \xi)\right|\le\ell\left(\mu^2+|\xi|^2\right)^\frac{p-1}{2},
	\end{equation}
	
	for all $\xi\in\R^n$ and for almost every $x\in\Omega.$\\
	
	The interest in the study of the regularity properties of solution to obstacle problems has been strongly increasing in the last decades as a research topic in Calulus of Variations and Partial Differential Equations.\\
	%First, in the Sixties, in works by Stampacchia (\cite{Stampacchia}) and Fichera (\cite{Fichera}), some classical obstacle problem were linked to the properties of the capacity of a set and to elastostatics. Later, many other papers and monographs were produced dealing with more general problems based on variational inequalities (\cite{LS, Rodrigues}).\\%
	From the very beginning, obstacle problems were solved applying techniques of functional analysis, and it was clear soon that the regularity properties of the solutions were strictly connected to those of the obstacle.\\
	In the linear setting it was observed that the solutions and the obstacle have the same regularity (see \cite{BK, CaffarelliKinderlehrer, KS}), but it is not the same in the nonlinear case.\\
	Hence, along the years, there has been an intense research activity concerning the regularity properties of solutions to obstacle problems in the nonlinear setting (see \cite{ByunChoOk} and the references therein).\\		
	%Some early H\"{o}lder continuity result where fuond by Michael and Ziemer in \cite{MZ} and Choe \cite{Choe}, and other result about obstacle problems can be found in \cite{BFM, ChoeLewis, Lindqvist}.	Later, obstacle problems begin to be studied in several functional frameworks \cite{ByunChoOk, EH1}, and more general growth conditions \cite{EH2, BCW, EH3, Ok1, Ok2}.%\\
	Many recent works deal with regularity properties of solutions to variational problems in which the integrand depends on the $x-$variable trough a function that is possibly discontinuous, such as in the case of Sobolev-type dependence, under quadratic (see \cite{PdN2}), and super-quadratic growth conditions (see \cite{Giova1, Giova2, Giova3, GiovaPassarelli, KristensenMingione,PdN1}).\\
	This kind of topics has been object of study also in the framework of obstacle problems (see \cite{CGG, EP1, EP2, MaZhang}), even in the case of $(p,q)$-growth condition (such as in \cite{CEP, Gavioli1, Gavioli2}).\\
	All the quoted papers show that the regularity of the obstacle influences the regularity of the solution, provided a suitable assumption is made on the map $x\mapsto f(x, \xi).$\\
	Already for unconstrained problems it is known that the sub-quadratic growth conditions require specific tools and, in general, the expected regularity of the solution, in the case $1<p<2$ strongly differs from the case $p\ge2$ (for a detailed explaination of this phenomenon see \cite{BDW}).\\
	We refer to the pioneering paper \cite{AcerbiFusco} in case of equations with H\"{o}lder-continuous coefficients (see also \cite{KM1, KM2, LeonettiMascoloSiepe}) and to \cite{Gentile1, Gentile2} for the case of Sobolev coefficients.\\
	The main aim of this paper is to extend to the sub-quadratic growth case some higher differentiability results for solutions to non-autonomous obstacle problems proved in \cite{EP2}.\\
	First, we show that an higher differentiability property of integer order of the gradient of the obstacle tranfers to the solution of problem \eqref{functionalobstacle}, provided the partial map $x\mapsto D_\xi f(x, \xi)$ belongs to a suitable Sobolev class, with no loss in the order of differentiation.\\
	More precisely  we assume that the map $x\mapsto A\left(x,\xi\right)$ belongs to $W^{1,n}_{\mathrm{loc}}(\Omega)$ for every $\xi\in\R^n$ or, equivalently, that there exists a non-negative function $g\in L^n_{\mathrm{loc}}(\Omega)$ such that
	
	\begin{equation}\label{x-dependence}
	\left|D_xA(x, \xi)\right|\le
	g(x)\left(\mu^2+|\xi|^2\right)^\frac{p-1}{2},
	\end{equation}
	(see \cite{Hajlasz}).\\
	
	Note that, since $f$, as a function of the $\xi$ variable, is of class $C^2$, then the operator $A$ is of class $C^1$ with respect to $\xi$, and \eqref{p-growtA2} implies
	
	\begin{equation}\label{A2bis}
	\left|D_\xi A(x, \xi)\right|\le c\left(\mu^2+|\xi|^2\right)^\frac{p-2}{2},
	\end{equation}
	
	for all $\xi\in\R^n\setminus\{0\}$ and for a. e. $x\in\Omega.$\\
	The first result we prove in this paper is the following.
	
	\begin{thm}\label{thm1}
		Let $u\in W^{1,p}_{\mathrm{loc}}(\Omega)$ be a solution to the obstacle problem \eqref{functionalobstacle} under assumptions \eqref{p-ellipticityA1}--\eqref{x-dependence} for $1<p<2$. Then the following implication holds:
		
		\begin{equation}\label{implication1}
		V_p\left(D\psi\right)\in W^{1,2}_{\mathrm{loc}}\left(\Omega\right)\Rightarrow V_p\left(Du\right)\in W^{1,2}_{\mathrm{loc}}\left(\Omega\right),
		\end{equation}
		
		with the following estimate
		
		\begin{align}\label{estimate1}
		\left\Arrowvert DV_p(Du(x))\right\Arrowvert_{L^2\left(B_{\frac{R}{2}}\right)}\le C\left(1+\left\Arrowvert Du\right\Arrowvert_{L^p(B_{2R})}+\left\Arrowvert V_p\left(D\psi\right)\right\Arrowvert_{W^{1, 2}(B_{2R})}+\left\Arrowvert g\right\Arrowvert_{L^n\left(B_{R}\right)}\right)^\sigma,
		\end{align}
		
		where $C$ and $\sigma$ are positive constants depending on $n, p, q, R, \alpha, \nu, L$ and $\ell$.
	\end{thm}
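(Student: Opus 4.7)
The plan is to derive a Caccioppoli-type inequality for the finite differences of $V_p(Du)$, starting from the variational inequality \eqref{variationalinequality} and using a test function that respects the obstacle constraint. Fix concentric balls $B_{R/2}\subset B_R\subset B_{2R}\Subset\Omega$ and a cutoff $\eta\in C^\infty_c(B_R)$ with $\eta\equiv 1$ on $B_{R/2}$ and $|D\eta|\le C/R$. For $|h|$ small, I would build two admissible competitors in $\mathcal{K}_\psi(\Omega)$ of the form
\[
\varphi_1=u-\tau_{-h}\!\bigl[\eta^{2}\tau_{h}(u-\psi)\bigr],\qquad \varphi_2=u+\tau_{-h}\!\bigl[\eta^{2}\tau_{h}(u-\psi)\bigr],
\]
verifying the admissibility $\varphi_i\ge \psi$ by the usual discrete convex-combination argument (writing $\eta^2\tau_h u = \eta^2\tau_h(u-\psi)+\eta^2\tau_h\psi$ and using $u\ge\psi$). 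Substituting both into \eqref{variationalinequality} and performing discrete integration by parts to move $\tau_{-h}$ onto $A(\cdot,Du)$ produces the core identity
\[
\int_{\Omega}\eta^{2}\bigl\langle\tau_{h}A(x,Du(x)),\tau_{h}Du(x)\bigr\rangle\,dx=\int_{\Omega}\eta^{2}\bigl\langle\tau_{h}A(x,Du(x)),\tau_{h}D\psi(x)\bigr\rangle\,dx+\mathcal{R},
\]
where $\mathcal{R}$ collects boundary-type contributions involving $D\eta$ and $\tau_h(u-\psi)$.

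The next step is to split $\tau_h A(x,Du(x))=\bigl[A(x+he_s,Du(x+he_s))-A(x+he_s,Du(x))\bigr]+\bigl[A(x+he_s,Du(x))-A(x,Du(x))\bigr]$. By the $p$-ellipticity \eqref{p-ellipticityA1}, the first bracket paired with $\tau_h Du$ controls from below $|\tau_hV_p(Du)|^2$, while by \eqref{x-dependence} the second bracket is bounded by $|h|\,\bar g(x)(\mu^2+|Du|^2)^{(p-1)/2}$ for a suitable $L^n$ function $\bar g$. Carrying this decomposition into both sides and invoking \eqref{p-growtA2} to estimate $|\tau_hA(x+he_s,\cdot)|$ by $(\mu^2+|Du(x)|^2+|Du(x+he_s)|^2)^{(p-2)/2}|\tau_h Du|$ transforms the right-hand side into a product of the form $|\tau_hV_p(Du)|\cdot|\tau_hV_p(D\psi)|$, modulo correction terms in $g$.

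The key balancing is the use of Young's inequality in the subquadratic regime: the negative exponent $(p-2)/2$ would normally cause trouble, but multiplying and dividing by $(\mu^2+|Du(x)|^2+|Du(x+he_s)|^2)^{(p-2)/4}$ reconstructs $|\tau_hV_p(Du)|$ factors on both sides, so a small parameter can be chosen and the unwanted $|\tau_hV_p(Du)|^2$ term reabsorbed into the left-hand side. For the $g$-term, I would apply H\"older's inequality pairing $g\in L^n$ with $(\mu^2+|Du|^2)^{p/2}\in L^{n/(n-2)}$-type quantities, then use the Sobolev embedding $W^{1,2}\hookrightarrow L^{2n/(n-2)}$ applied to $V_p(Du)$ (which, up to constants, behaves like $(\mu^2+|Du|^2)^{p/4}$) to transfer part of the resulting $L^{2n/(n-2)}$ norm of $V_p(Du)$ onto $\|DV_p(Du)\|_{L^2}$, again reabsorbing a small multiple into the left-hand side.

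After these absorptions, dividing by $|h|^{2}$ and passing to the limit $h\to 0$ through the standard finite-difference characterization of $W^{1,2}_{\mathrm{loc}}$ yields
\[
\int_{B_{R/2}}|DV_p(Du)|^{2}\,dx\le C\Bigl(1+\|Du\|_{L^{p}(B_{2R})}^{p}+\|V_p(D\psi)\|_{W^{1,2}(B_{2R})}^{2}+\|g\|_{L^{n}(B_R)}^{\beta}\Bigr)^{\sigma},
\]
which gives both the conclusion in \eqref{implication1} and the quantitative estimate \eqref{estimate1}. The main obstacle will be the subquadratic Young balance of Step 3: in contrast to the $p\ge 2$ case treated in \cite{EP2}, the absorption must be carried out without ever isolating a bare $|\tau_h Du|^2$, since the coefficient $(\mu^2+|\xi|^2)^{(p-2)/2}$ degenerates as $|\xi|\to\infty$; every estimate must therefore be expressed directly in the $V_p$-language, and the $g$-terms must be distributed between the absorbable part and the Sobolev-embedding part with exponents compatible with $1<p<2$ and $n\ge 3$.
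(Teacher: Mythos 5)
Your plan departs from the paper's proof of Theorem 1.1 in a natural way (the paper invokes the linearization of Fuchs--Mingione, Theorem \ref{Fuchs}, and tests the resulting \emph{equation} \eqref{diveq}; you propose to work directly with the variational inequality \eqref{variationalinequality}, which is closer to how the paper handles Theorems \ref{thm2}--\ref{thm3}). That alternative route is viable, but as written it has two genuine gaps.

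\textbf{Admissibility of the test functions.} You claim that both $\varphi_1=u-\tau_{-h}[\eta^{2}\tau_{h}(u-\psi)]$ and $\varphi_2=u+\tau_{-h}[\eta^{2}\tau_{h}(u-\psi)]$ lie in $\mathcal K_\psi(\Omega)$ by ``the usual discrete convex-combination argument,'' and deduce an \emph{identity}. That argument does not apply to either of them. For example, when $\eta(x)=\eta(x-h)=1$ one computes
\[
\varphi_1(x)-\psi(x)=-(u-\psi)(x)+(u-\psi)(x-h)+(u-\psi)(x+h),
\]
which is negative whenever $u>\psi$ sharply at $x$ but $u=\psi$ at $x\pm h$; and similarly for $\varphi_2$. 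The convex-combination trick only delivers one-sided perturbations: one must use, as in Section \ref{Thm2Pf}, the two separate competitors $u+\tau\,\eta^{2}(\cdot)\tau_{h}(u-\psi)$ and $u+\tau\,\eta^{2}(\cdot-h)\tau_{-h}(u-\psi)$, each of which is admissible for $\tau\in[0,1]$, and after a change of variables add the two resulting \emph{inequalities}; one obtains an inequality of definite sign, not an identity. This changes the organization of the error terms (there is no right-hand side to expand in $A_x$ and $A_\xi D^2\psi$), so your ``$\mathcal R$'' must be re-derived.

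\textbf{The closure of the $g$-term requires higher integrability of $Du$, not just Sobolev embedding of $V_p(Du)$.} After pairing $g\in L^n$ with $(\mu^{2}+|Du|^{2})^{p/2}\approx |V_p(Du)|^{2}$ via H\"older with exponents $(n/2,\,n/(n-2))$, you propose to bound $\|V_p(Du)\|_{L^{2n/(n-2)}}$ by $\|DV_p(Du)\|_{L^{2}}$ and then reabsorb ``a small multiple.'' But the critical Sobolev embedding carries no free smallness parameter: the interpolation exponent in Gagliardo--Nirenberg at the exponent $2n/(n-2)$ is exactly $1$, so the coefficient in front of $\|DV_p(Du)\|_{L^{2}}^{2}$ is $\|g\|_{L^{n}(B_t)}^{2}$, which is \emph{not} small in general. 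Moreover, a priori you only know $V_p(Du)\in L^{2}_{\mathrm{loc}}$, so that Sobolev norm is not even finite at that stage. The paper sidesteps both issues by first invoking Lemma \ref{lemma2.11} ($x\mapsto A(x,\xi)$ is locally uniformly $VMO$ under \eqref{x-dependence}) and then Theorem \ref{ThmByunChoOk} (Byun--Cho--Ok) at exponent $q=\frac{np}{n-2}$: since $V_p(D\psi)\in W^{1,2}_{\mathrm{loc}}$ implies $D\psi\in L^{np/(n-2)}_{\mathrm{loc}}$ by Sobolev embedding, the Calder\'on--Zygmund estimate gives $Du\in L^{np/(n-2)}_{\mathrm{loc}}$ directly with the explicit bound \eqref{BCOestimate}, which is what actually closes \eqref{conclusion1.2}. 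Your sketch omits this step entirely; without it, or without an approximation/iteration scheme that rigorously furnishes the a priori higher integrability, the absorption cannot be carried out and the estimate does not close.
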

	
	Our next aim is to prove that the analogous phenomenon holds true in case the obstacle belongs to a Besov-Lipschitz space, provided we assume a Besov-Lipschitz dependence of the operator $A$ with respect to the $x$-variable. This represents, in some sense, the "fractional counterpart" of Theorem \ref{thm1}.\\
	More precisely, instead of \eqref{x-dependence}, we assume that, given $\alpha\in(0,1)$ and $1\le q<\infty$ there is a sequence of measurable non-negative functions $g_k\in L^{\frac{n}{\alpha}}(\Omega)$ such that
	$$
	\sum_k\Arrowvert g_k\Arrowvert^q_{L^{\frac{n}{\alpha}}(\Omega)}<\infty,
	$$ 
	and at the same time
	
	\begin{equation}\label{A5}
	\left|A(x, \xi)-A(y, \xi)\right|\le\left(g_k(x)+g_k(y)\right)|x-y|^\alpha\left(\mu^2+|\xi|^2\right)^\frac{p-1}{2},
	\end{equation}
	
	for each $\xi\in\R^n$ and almost every $x, y\in\Omega$ such that $2^{-k}\mathrm{diam}(\Omega)\le|x-y|\le2^{-k+1}\mathrm{diam}(\Omega)$. We will shortly write then that $\left(g_k\right)_k\in\ell^q\left(L^\frac{n}{\alpha}(\Omega)\right).$
	If $A(x, \xi)=\gamma(x)|\xi|^{p-2}\xi$ and $\Omega=\R^n$ then \eqref{A5} says that $\gamma\in B^\alpha_{\frac{n}{\alpha},q}.$\\
	
	It is worth noticing that, due to the sub-quadratic growth conditions, the Besov regularity of the obstacle transfers to the solution with a small loss in the order of differentiations.
	
	\begin{thm}\label{thm2}
		Let $u\in W^{1, p}_{\mathrm{loc}}(\Omega)$ be a solution to the obstacle problem \eqref{functionalobstacle}, under the assumptions \eqref{p-ellipticityA1}--\eqref{p-growthA3} and \eqref{A5}, for $1<p<2$. Then the following implication holds 
		
		\begin{equation}\label{implication2}
		V_p\left(D\psi\right)\in B^{\alpha}_{2,q,\mathrm{loc}}(\Omega)\Rightarrow V_p\left(Du\right)\in B^{\alpha\beta}_{2,q,\mathrm{loc}}(\Omega)
		\end{equation}
		
		for any $q\le 2^*_\alpha=\frac{2n}{n-2\alpha}$ and $\beta\in(0, 1)$.\\
		Moreover, for any ball $B_{4R}\Subset\Omega$, the following estimate holds
		
		\begin{align}\label{estimate2}
		\left\Arrowvert\frac{\tau_h V_p\left(Du\right)}{|h|^{\alpha\beta}}\right\Arrowvert_{L^q\left(\frac{dh}{|h|^n}; L^2\left(B_{\frac{R}{2}}\right)\right)}\le C&\Bigg(1+\left\Arrowvert Du\right\Arrowvert_{L^p(B_{4R})}+\left\Arrowvert V_p\left(D\psi\right)\right\Arrowvert_{B^\alpha_{2,q}(B_{4R})}\notag\\
		&+\left\Arrowvert\left\lbrace g_k\right\rbrace_k\right\Arrowvert_{\ell^q\left(L^\frac{n}{\alpha}\left(B_{2R}\right)\right)}\Bigg)^\sigma,
		\end{align}
				
		where $C$ and $\sigma$ are positive constants depending on $n, p, q, R, \alpha, \nu, L$ and $\ell$.
	\end{thm}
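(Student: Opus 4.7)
The plan is to adapt the difference-quotient argument of \cite{EP2} from the super-quadratic to the sub-quadratic regime, exploiting the auxiliary map $V_p$ together with the ellipticity inequality \eqref{p-ellipticityA1}. Fix a ball $B_{4R}\Subset\Omega$ and a cut-off $\eta\in C^\infty_c(B_R)$ with $\eta\equiv 1$ on $B_{R/2}$; for each $h\in\R^n$ with $|h|$ small, I would insert into \eqref{variationalinequality} the pair of admissible test functions in $\mathcal{K}_\psi(\Omega)$ built from $u$, $\psi$, and $\eta^2\tau_h(\cdot)$ constructed in \cite{EP2}, whose admissibility relies on the a.e.~inequality $u\ge\psi$. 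After a discrete integration by parts in $\tau_h$, the resulting inequality rearranges as
\[
\int_\Omega\eta^2\bigl\langle A(x+h,Du(x+h))-A(x+h,Du(x)),\tau_h Du(x)\bigr\rangle\,dx\le\Sigma_1+\Sigma_2+\Sigma_3,
\]
where $\Sigma_1$ encodes the $x$-dependence of $A$ via \eqref{A5}, $\Sigma_2$ collects the finite differences of the obstacle (through $\tau_h V_p(D\psi)$), and $\Sigma_3$ is the cut-off remainder controlled by \eqref{p-growtA2} and $\|\nabla\eta\|_\infty\le C/R$.

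By \eqref{p-ellipticityA1} the left-hand side dominates $c\int\eta^2|\tau_h V_p(Du)|^2\,dx$, so the task reduces to estimating each $\Sigma_i$. In $\Sigma_1$, \eqref{A5} produces a factor $|h|^\alpha(g_k(x)+g_k(x+h))(\mu^2+|Du|^2+|Du(\cdot+h)|^2)^{(p-1)/2}$, which in the sub-quadratic case cannot be directly absorbed into $V_p$. The standard remedy is the splitting
\[
(\mu^2+|Du|^2+|Du(\cdot+h)|^2)^{(p-1)/2}=(\mu^2+|Du|^2+|Du(\cdot+h)|^2)^{(p-2)/4}\cdot(\mu^2+|Du|^2+|Du(\cdot+h)|^2)^{p/4},
\]
so that the first factor, paired with $|\tau_h Du|$, reconstructs $|\tau_h V_p(Du)|$ and can be absorbed via Young's inequality; the residual is a $p/2$-power of $\mu^2+|Du|^2$ times $|h|^{2\alpha}(g_k(x)+g_k(x+h))^2$. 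Hölder with exponents $n/\alpha$ and its conjugate, combined with the Sobolev embedding made available by the assumption $q\le 2^*_\alpha$, then separates $\|g_k\|^2_{L^{n/\alpha}}$ from a higher Lebesgue norm of $V_p(Du)$ on a slightly enlarged ball. The terms $\Sigma_2$ and $\Sigma_3$ are handled analogously, with $\tau_h V_p(D\psi)$ replacing $|h|^\alpha g_k$ in $\Sigma_2$.

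Once the pointwise-in-$h$ bound
\[
\int_{B_R}\eta^2|\tau_h V_p(Du)|^2\,dx\le C|h|^{2\alpha}\Bigl(\|g_k\|^2_{L^{n/\alpha}(B_{2R})}+\bigl\|\tau_h V_p(D\psi)/|h|^\alpha\bigr\|^2_{L^2(B_{2R})}+\text{l.o.t.}\Bigr)
\]
is available for $h$ in the $k$-th dyadic annulus $2^{-k}\mathrm{diam}(\Omega)\le|h|\le 2^{-k+1}\mathrm{diam}(\Omega)$, I would divide by $|h|^{2\alpha\beta}$, raise to the $q/2$-th power, integrate against $dh/|h|^n$, and sum in $k$; the $\ell^q$-summability of $(g_k)_k$ in $L^{n/\alpha}$ and the Besov regularity of $V_p(D\psi)$ then yield \eqref{estimate2}. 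The main obstacle I anticipate is the Young-inequality absorption step: the residual $p/2$-power of $\mu^2+|Du|^2$ sits at an integrability threshold just above $2$, and to fit inside the finite-difference framework it must be interpolated between the baseline $L^p$-summability of $Du$ and the desired Besov bound for $V_p(Du)$ itself. This self-improvement is precisely where the loss $\beta\in(0,1)$ enters, distinguishing Theorem \ref{thm2} from its super-quadratic counterpart in \cite{EP2}, in which the analogous residual has the "right" sign and no such interpolation is needed.
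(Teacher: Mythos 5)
Your overall architecture (difference quotients of $u-\psi$ inserted into the variational inequality, reconstruction of $|\tau_h V_p(Du)|^2$ on the left via \eqref{p-ellipticityA1} and Lemma \ref{lemma6GP}, H\"{o}lder with $n/\alpha$ against $(g_k)$, dyadic-annulus summation) tracks the paper's proof. However, the crucial closing step is wrong, and without it the argument does not go through. After absorbing the $V_p(Du)$ factor by Young's inequality, both the coefficient term and the cut-off remainders leave residuals of the form $\int(\mu^2+|Du|^2)^{np/(2(n-2\alpha))}$. You propose to control these by ``interpolating between the baseline $L^p$-summability of $Du$ and the desired Besov bound for $V_p(Du)$ itself.'' This is not available: at this stage one does not yet know $V_p(Du)$ lies in any Besov or higher Lebesgue class, so there is nothing to interpolate against, and the inequality would be self-referential. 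The paper closes this gap through an \emph{a priori and independent} higher-integrability input: assumption \eqref{A5} forces $x\mapsto A(x,\xi)$ into a local uniform $VMO$ class (Lemma \ref{lemma2.11}), which activates the Calder\'{o}n--Zygmund estimate of Theorem \ref{ThmByunChoOk}; since $q\le 2^*_\alpha$ puts $V_p(D\psi)$, hence $D\psi$, in $L^{np/(n-2\alpha)}_{\mathrm{loc}}$, that theorem upgrades $Du$ to $L^{np/(n-2\alpha)}_{\mathrm{loc}}$ \emph{before} any difference-quotient estimate, and this is what makes the residuals finite. This ingredient is absent from your plan and cannot be replaced by interpolation.

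You also misdiagnose the source of the loss $\beta$. It does not arise from any Lebesgue-scale interpolation of the residual: it comes from the cut-off remainder terms, which carry powers $|h|^p$ and $|h|^{\alpha+1}$ rather than the desired $|h|^{2\alpha}$. The paper handles this by letting the radius shrink with the increment, $R=|h|^\beta$ (and correspondingly $|D\eta|\lesssim |h|^{-\beta}$), so that every term acquires a power of $|h|$ at least $2\alpha\beta$ before dividing; this geometric rescaling is what produces the factor $\beta$ in $B^{\alpha\beta}_{2,q}$, and it also forces the careful bookkeeping tying the dyadic annulus index $k$ (from \eqref{A5}) to the admissible range of $|h|$ once $R$ depends on $h$. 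As written, your proposal never mentions the $R$-$|h|$ coupling, so even if the Calder\'{o}n--Zygmund step were supplied, you would not obtain the exponent $\alpha\beta$.
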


	In the Besov-Lipschitz framework, if $q=\infty$, we still have that a fractional differentiability property of the obstacle transfers to the solution with a larger loss on the order of differentiation than the one we have when $q$ is finite. This is due to the fact that the regularity of the type $B^\alpha_{p,\infty}$ is the weakest one to assume both on the coefficients and on the gradient of the obstacle (see Lemmas \ref{Lemma2.9EP} and \ref{EP2.3} in Section \ref{Preliminaries} below).\\
	More precisely, we prove the following.
	
	\begin{thm}\label{thm3}
		Let $u\in W^{1, p}_{\mathrm{loc}}(\Omega)$ be a solution to the obstacle problem \eqref{functionalobstacle}, under the assumptions \eqref{p-ellipticityA1}--\eqref{p-growthA3} for $1<p<2$. If there exists $\alpha\in(0,1)$ and a function $g\in L^\frac{n}{\alpha}_{\mathrm{loc}}(\Omega)$ such that
		
		\begin{equation}\label{A6}
		\left|A(x, \xi)-A(y, \xi)\right|\le\left(g(x)+g(y)\right)|x-y|^\alpha\left(\mu^2+|\xi|^2\right)^\frac{p-1}{2},
		\end{equation}
		
		for a. e. $x, y\in\Omega$ and for every $\xi\in\R^n$, then, provided $0<\alpha<\gamma<1$ the following implication holds
		
		\begin{equation}\label{implication3}
		V_p\left(D\psi\right)\in B^{\gamma}_{2,\infty,\mathrm{loc}}(\Omega)\Rightarrow V_p\left(Du\right)\in B^{\alpha\beta}_{2,\infty,\mathrm{loc}}(\Omega),
		\end{equation}
		for any $\beta\in(0, 1).$\\
		Moreover, for any ball $B_{4R}\Subset\Omega$, the following estimate holds
		
		\begin{align}\label{estimate3}
		\left[V_p\left(Du\right)\right]_{\dot{B}^{\alpha\beta}_{2,\infty}\left(B_{\frac{R}{2}}\right)}\le& C\left(1+\left\Arrowvert Du\right\Arrowvert_{L^p(B_{4R})}+\left\Arrowvert V_p\left(D\psi\right)\right\Arrowvert_{B^\gamma_{2,\infty}(B_{4R})}\right.\notag\\
		&\left.\quad+\left\Arrowvert g\right\Arrowvert_{L^\frac{n}{\alpha}\left(B_{2R}\right)}\right)^\sigma,
		\end{align}
		
		where $C$ and $\sigma$ are positive constants depending on $n, p, q, R, \alpha, \beta,\gamma, \nu, L$ and $\ell$.
	\end{thm}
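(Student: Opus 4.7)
The plan is to adapt the difference-quotient Caccioppoli scheme used for Theorem \ref{thm2} to the weaker $B^{\cdot}_{2,\infty}$ scale, exploiting the gap $\gamma>\alpha$ to compensate for the fact that with $q=\infty$ we only have pointwise control in $h$ of the translation norms. Fix a ball $B_{4R}\Subset\Omega$, a cut-off $\eta\in C^{\infty}_{0}(B_{R})$ with $\eta\equiv 1$ on $B_{R/2}$ and $|D\eta|\le c/R$, and an increment $h\in\R^{n}$ with $|h|<R/4$.

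\textbf{Step 1 (admissible test functions).} Since $u\in\mathcal K_\psi$ does not force $u(\cdot+h)\in\mathcal K_\psi$, I would insert in \eqref{variationalinequality} two competitors of the form
\[
\varphi^{\pm}=u\mp\eta^{2}\bigl(\tau_{\mp h}\tau_{\pm h}u+\text{obstacle correction}\bigr),
\]
built exactly as in \cite{EP2}, chosen so that $\varphi^{\pm}\ge\psi$ pointwise. Summing the two resulting inequalities and performing the standard discrete integration by parts produces
\[
I:=\int_\Omega \eta^{2}\bigl\langle A(x+h,Du(x+h))-A(x+h,Du(x)),\,\tau_h Du(x)\bigr\rangle\,dx\le J_1+J_2+J_3,
\]
where $J_1$ carries the oscillation of $A$ in the $x$-variable, $J_2$ is a cut-off error involving $D\eta$ and $\tau_h u$, and $J_3$ couples $A$ to $D\tau_h\psi$ through the obstacle correction.

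\textbf{Step 2 (ellipticity and the three error terms).} By \eqref{p-ellipticityA1} combined with the pointwise equivalence $|\tau_h V_p(Du)|^{2}\le c\,(\mu^{2}+|Du|^{2}+|Du(\cdot+h)|^{2})^{(p-2)/2}|\tau_h Du|^{2}$, the left-hand side satisfies $I\ge c\int\eta^{2}|\tau_h V_p(Du)|^{2}\,dx$. For $J_1$ I would apply \eqref{A6} followed by H\"older and the Sobolev embedding for $(\mu+|Du|)^{p/2}$, producing the factor $|h|^{\alpha}$ paired with $\|g\|_{L^{n/\alpha}(B_{2R})}$ and $\|Du\|_{L^{p}(B_{4R})}$. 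For $J_3$, I would apply Lemma \ref{EP2.3} to translate the assumption $V_p(D\psi)\in B^{\gamma}_{2,\infty,\mathrm{loc}}$ into $\|\tau_h V_p(D\psi)\|_{L^{2}(B_{2R})}\le C|h|^{\gamma}[V_p(D\psi)]_{\dot B^{\gamma}_{2,\infty}(B_{2R})}$, and couple it with \eqref{p-growthA3}. The outcome I expect is an inequality of the shape
\[
\int_{B_R}\eta^{2}|\tau_h V_p(Du)|^{2}\,dx\le C|h|^{2\alpha}\mathcal D_1+C|h|^{\alpha+\gamma}\mathcal D_2+C|h|^{\alpha p}\|\tau_h Du\|_{L^{p}(B_{R})}^{p},
\]
with $\mathcal D_1,\mathcal D_2$ polynomial in the norms appearing on the right-hand side of \eqref{estimate3}; since $\gamma>\alpha$, the middle term is already of strictly higher order than $|h|^{2\alpha}$.

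\textbf{Step 3 (sub-quadratic loss and iteration).} The last term is what forces the exponent $\alpha\beta$ with $\beta<1$. Using the pointwise interpolation
\[
|\tau_h Du|^{p}\le c\,(\mu^{2}+|Du|^{2}+|Du(\cdot+h)|^{2})^{\frac{p(2-p)}{4}}|\tau_h V_p(Du)|^{p}
\]
together with H\"older with a free parameter calibrated on $\beta$ and Young's inequality to absorb a small multiple of $\|\tau_h V_p(Du)\|_{L^{2}}^{2}$, I expect to land on an inequality of the form
\[
\|\tau_h V_p(Du)\|_{L^{2}(B_{R/2})}^{2}\le C|h|^{2\alpha\beta}\mathcal D^{\sigma}+\tfrac{1}{2}\|\tau_h V_p(Du)\|_{L^{2}(B_{R})}^{2},
\]
so that a concentric-ball iteration lemma of Giaquinta--Giusti type eliminates the self-contribution and produces $\|\tau_h V_p(Du)\|_{L^{2}(B_{R/2})}\le C|h|^{\alpha\beta}\mathcal D^{\sigma/2}$. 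Dividing by $|h|^{\alpha\beta}$ and taking the supremum over $|h|<R/4$ yields simultaneously the implication \eqref{implication3} and the estimate \eqref{estimate3}.

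\textbf{Main obstacle.} The delicate point is Step 3: the Young splitting must be tuned so that precisely the $\beta$-fraction of the $|h|^{\alpha}$ scale survives after absorption, with no residual term at an order $\alpha\beta'<\alpha\beta$ leaking out of either $J_1$ or the obstacle term $J_3$. It is exactly the gap $\gamma-\alpha>0$ assumed on $V_p(D\psi)$ that creates the room for $\beta$ to be taken arbitrarily close to $1$; without this gap, the $|h|^{\alpha+\gamma}$ term in Step 2 would be forced to scale as $|h|^{2\alpha\beta}$ and would dictate a strictly smaller exponent in the conclusion. Secondarily, in Step 1 one must be careful to construct the obstacle correction so that the error it contributes is controlled solely by $\|\tau_h V_p(D\psi)\|_{L^2}$ and not by higher-order differences of $\psi$ which are not available.
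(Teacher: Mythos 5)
Your Step 1 matches the paper's setup, but from Step 2 onward your scheme diverges from what actually works, and the divergence hides a genuine gap.

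\textbf{The role of $\gamma>\alpha$ is misdiagnosed.} You use the gap $\gamma-\alpha>0$ to claim that the obstacle contribution decays like $|h|^{\alpha+\gamma}$ and is therefore harmlessly subcritical; that is not the mechanism in the paper, and it cannot be, because the theorem allows $\beta$ to be \emph{arbitrary} in $(0,1)$ while $\gamma-\alpha$ can be arbitrarily small --- your explanation would tie the achievable $\beta$ to the size of $\gamma-\alpha$. What the strict inequality $\gamma>\alpha$ actually buys is a \emph{higher integrability} statement: when $q=\infty$, Lemma \ref{EP2.2} gives no embedding at the endpoint, so the paper invokes the fractional Giusti-type Lemma \ref{Lemma2.9EP} to deduce $V_p(D\psi)\in L^{\frac{2n}{n-2\alpha}}_{\mathrm{loc}}$ (possible precisely because $\alpha<\gamma$), hence $D\psi\in L^{\frac{np}{n-2\alpha}}_{\mathrm{loc}}$, and then the Calder\'on--Zygmund estimate of Theorem \ref{ThmByunChoOk} transfers this to $Du\in L^{\frac{np}{n-2\alpha}}_{\mathrm{loc}}$. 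Your sketch never invokes this ingredient, and without it the error terms carrying $g$ cannot be closed, because H\"older with exponents $\bigl(\tfrac{n}{2\alpha},\tfrac{n}{n-2\alpha}\bigr)$ forces $(\mu^2+|Du|^2)^{p/2}$ to be summable at the power $\tfrac{n}{n-2\alpha}$.

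\textbf{The engine is the scaling $R=|h|^\beta$, not an iteration lemma.} In the paper the cut-off errors involve $\tau_h(u-\psi)$ (not $\tau_h Du$), so Lemma \ref{le1} yields a \emph{full} factor $|h|^p\,\|D(u-\psi)\|_{L^p(B_R)}^p$, and the higher integrability gives an additional $R^{2\alpha}$. All surviving terms then carry mixed powers $|h|^a R^{-b}$ with $a-\beta b-2\alpha\beta>0$ for every $\beta<1$; setting $R=|h|^{\beta}$ and dividing by $|h|^{2\alpha\beta}$ makes every residual power of $|h|$ nonnegative, after which one simply takes the supremum over $h$. There is no term of the form $|h|^{\alpha p}\|\tau_h Du\|_{L^p}^p$ to absorb (the only $\tau_h Du$ contribution is the $\varepsilon$-piece, which is killed at fixed $R$ by ellipticity), so the Giaquinta--Giusti hole-filling and the interpolation $|\tau_h Du|^p\lesssim(\ldots)^{p(2-p)/4}|\tau_h V_p(Du)|^p$ that you centre Step 3 on are not needed and do not appear. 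Your ``main obstacle'' paragraph is therefore aimed at a nonexistent difficulty, while the two genuine ones --- invoking Lemma \ref{Lemma2.9EP} and Theorem \ref{ThmByunChoOk}, and discovering the anisotropic scaling $R=|h|^\beta$ --- are absent from the proposal.
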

	The main difference between the Sobolev and the Besov setting is due to the fact that, in the Sobolev case, we can use a very well known linearization technique based on the fact that solving the problem is equivalent to solve an equation whose right-hand side is different from zero only in the set where the solution coincides with the obstacle (see \cite{Fuchs, FuchsMingione}). Here we take advantage from this method thanks to the Sobolev regularity of the gradient of the obstacle.\\
	Differently, in the Besov case, we need to start from the variational inequality, since we can't exploit the calculations in the right-hand side of the equation that comes from the linearization technique (see \eqref{diveq} below).\\
	In both cases, essential tools are the difference-quotient method and Calderòn-Zygmund type estimates proved in \cite{ByunChoOk}. We take advantage from this Calderòn-Zygmund estimates, since our assumptions on the map $x\mapsto A(x,\xi)$ imply its $VMO$ regularity (see Lemma \ref{lemma2.11} below).\\
	
	We conclude this introduction with a brief description of the structure of this paper. Section \ref{Preliminaries} is devoted to the preliminaries: after a list of some classical notations, and some general results, we recall some classical properties of difference quotients of Sobolev functions, basic properties of Besov-Lipschitz spaces and, at last, some useful results involving variational problems with $VMO$ coefficients. In Section \ref{Thm1Pf} we prove Theorem \ref{thm1}, in Section \ref{Thm2Pf} the proof of Theorem \ref{thm2} is given, and the paper concludes with the proof of Theorem \ref{thm3} in Section \ref{Thm3Pf}.
	
	\section{Notations and preliminary results}\label{Preliminaries}
	
	In this section we list the notations that we use in this paper and recall some tools that will be useful to prove our results.\\
	We shall follow the usual convention and denote by $C$ or $c$ a general constant that may vary on different occasions, even within the same
	line of estimates. Relevant dependencies on parameters and special
	constants will be suitably emphasized using parentheses or
	subscripts. The norm we use on $\R^n$, will be the standard Euclidean one.\\
	For  a  $C^2$ function $f \colon \Omega\times\R^{n} \to \R$, we write
	$$
	D_\xi f(x,\xi )[\eta ] := \frac{\rm d}{{\rm d}t}\Big|_{t=0} f(x,\xi
	+t\eta )\quad \mbox{ and } \quad D_{\xi\xi}f(x,\xi )[\eta ,\eta ] :=
	\frac{\rm d^2}{{\rm d}t^{2}}\Big|_{t=0} f(x,\xi +t\eta )
	$$
	for $\xi$, $\eta \in \R^{n}$ and for almost every $x\in \Omega$.\\
	With the symbol $B(x,r)=B_r(x)=\{y\in
	\R^n:\,\, |y-x|<r\}$, we will denote the ball centered at $x$ of
	radius $r$ and
	$$(u)_{x_0,r}= \Mint_{B_r(x_0)}u(x)\,dx,$$
	stands for the integral mean of $u$ over the ball $B_r(x_0)$. We
	shall omit the dependence on the center  when it is clear from the context.
	In the following, we will denote, for any ball
	$B=B_r(x_0)=\{x\in\R^n: |x-x_0|<r\}\Subset\Omega$
	
	\begin{equation}
		\Mint_Bu(x)dx=\frac{1}{|B|}\int_Bu(x)dx.
	\end{equation}
	
	Here we recall some results that will be useful in the following.
	We will use the auxiliary function $V_p:\R^n\to\R^n$, defined as 
	
	\begin{equation}\label{Vp}
		V_p(\xi):=\left(\mu^2+|\xi|^2\right)^\frac{p-2}{4}\xi,
	\end{equation}
	
	\noindent for which the following estimates hold (see \cite{GM} for the case $p\ge2$ and \cite{AcerbiFusco} for the case $1<p<2$).
	
	\begin{lemma}\label{lemma6GP}
		Let $1<p<\infty$. There is a constant $c=c(n, p)>0$ such that
		
		\begin{equation}\label{lemma6GPestimate1}
			c^{-1}\left(\mu^2+|\xi|^2+|\eta|^2\right)^\frac{p-2}{2}\le\frac{\left|V
				_p(\xi)-V_p(\eta)\right|^2}{|\xi-\eta|^2}\le c\left(\mu^2+|\xi|^2+|\eta|^2\right)^\frac{p-2}{2},
		\end{equation}
		
		\noindent for any $\xi, \eta\in\R^n.$
		Moreover, for a $C^2$ function $g$, there is a constant $C(p)$ such that
		
		\begin{equation}\label{lemma6GPestimate2}
			C^{-1}\left|D^2g\right|^2\left(\mu^2+\left|Dg\right|^2\right)^\frac{p-2}{2}\le\left|DV_p(Dg)\right|^2\le C\left|D^2g\right|^2\left(\mu^2+\left|Dg\right|^2\right)^\frac{p-2}{2}
		\end{equation}.
	\end{lemma}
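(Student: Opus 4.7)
The plan is to derive both estimates from pointwise control of the Jacobian of $V_p$. A direct computation gives
$$
D_\xi V_p(\xi) = (\mu^2+|\xi|^2)^{(p-2)/4}\Bigl(I + \tfrac{p-2}{2}\,\tfrac{\xi\otimes\xi}{\mu^2+|\xi|^2}\Bigr),
$$
and since the symmetric matrix in parentheses has eigenvalues lying in the interval $[\min(1,p/2),1]$ (with constants depending only on $p$), I obtain the pointwise two-sided bound
$$
|D_\xi V_p(\xi)\,w|^2 \;\simeq\; (\mu^2+|\xi|^2)^{(p-2)/2}\,|w|^2\qquad\text{for every }w\in\R^n,
$$
with equivalence constants depending only on $n$ and $p$. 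This is the one algebraic input that drives both halves of the lemma.

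For \eqref{lemma6GPestimate1}, I would represent the difference as the line integral
$$
V_p(\xi)-V_p(\eta)=\int_0^1 D_\xi V_p\bigl(\eta+t(\xi-\eta)\bigr)\,(\xi-\eta)\,dt,
$$
take Euclidean norms, and apply the pointwise Jacobian bound, reducing the claim to the classical comparison
$$
\int_0^1\bigl(\mu^2+|\eta+t(\xi-\eta)|^2\bigr)^{(p-2)/2}\,dt \;\simeq\; \bigl(\mu^2+|\xi|^2+|\eta|^2\bigr)^{(p-2)/2},\qquad 1<p\le 2.
$$
For \eqref{lemma6GPestimate2}, the chain rule applied to the $C^2$ composition $V_p\circ Dg$ yields $D\bigl(V_p(Dg)\bigr)=D_\xi V_p(Dg)\cdot D^2g$, and the pointwise Jacobian bound evaluated at $\xi=Dg$ gives the claimed equivalence at once.

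The main obstacle is the integral comparison needed for \eqref{lemma6GPestimate1} in the sub-quadratic regime: when $p<2$ the exponent $(p-2)/2$ is negative, so the integrand becomes large where the segment $\eta+t(\xi-\eta)$ approaches the origin, and the trivial bound $|\eta+t(\xi-\eta)|\le|\xi|+|\eta|$ only controls it from below. The standard way around this is to split according to whether $|\xi-\eta|\le \tfrac{1}{2}\max\{|\xi|,|\eta|\}$ or not. In the first case every point of the segment is comparable to both endpoints, so the integrand is essentially constant and comparable to $(\mu^2+|\xi|^2+|\eta|^2)^{(p-2)/2}$. In the second case $|\xi-\eta|^2\simeq|\xi|^2+|\eta|^2$, and one evaluates the integral directly by a one-dimensional change of variable along the segment. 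Reassembling the two cases produces the two-sided estimate, after which \eqref{lemma6GPestimate2} follows by the chain-rule observation above.
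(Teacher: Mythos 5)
The paper does not prove this lemma; it simply records it and refers to \cite{GM} for $p\ge2$ and \cite{AcerbiFusco} for $1<p<2$, so there is no internal proof to compare yours against. That said, your sketch follows the standard route used in those references: express $V_p(\xi)-V_p(\eta)$ as a line integral of $D_\xi V_p$, exploit the explicit symmetric Jacobian whose eigenvalues are pointwise comparable to $(\mu^2+|\xi|^2)^{(p-2)/4}$, and reduce to a scalar comparison along the segment (note, though, that the reduced integrand should carry the exponent $(p-2)/4$, not $(p-2)/2$, since the Jacobian bound produces a fourth-root weight before you square). The second estimate \eqref{lemma6GPestimate2} by chain rule, $D\bigl(V_p(Dg)\bigr)=D_\xi V_p(Dg)\,D^2g$, together with $c\,I\preceq \bigl(D_\xi V_p\bigr)^{\!\top}D_\xi V_p\preceq C\,I$ scaled by $(\mu^2+|Dg|^2)^{(p-2)/2}$, is correct and immediate.

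The one genuine gap is the lower bound in \eqref{lemma6GPestimate1}. From
$V_p(\xi)-V_p(\eta)=\int_0^1 D_\xi V_p\bigl(\eta+t(\xi-\eta)\bigr)(\xi-\eta)\,dt$, taking Euclidean norms and using the pointwise Jacobian equivalence only yields the \emph{upper} estimate, because the triangle inequality goes the wrong way for a lower bound and the integral of a vector field can cancel. The standard fix is to pair against $\xi-\eta$ and use positive-definiteness of the symmetric Jacobian:
\[
\langle V_p(\xi)-V_p(\eta),\,\xi-\eta\rangle
=\int_0^1\bigl\langle D_\xi V_p(\zeta(t))(\xi-\eta),\,\xi-\eta\bigr\rangle\,dt
\ge c\,|\xi-\eta|^2\int_0^1\bigl(\mu^2+|\zeta(t)|^2\bigr)^{\frac{p-2}{4}}dt,
\]
then apply Cauchy--Schwarz on the left. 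The remaining scalar lower bound $\int_0^1(\mu^2+|\zeta(t)|^2)^{(p-2)/4}\,dt\ge c\,(\mu^2+|\xi|^2+|\eta|^2)^{(p-2)/4}$ is immediate for $1<p<2$ since the exponent is negative and $|\zeta(t)|^2\le 2(|\xi|^2+|\eta|^2)$ on the whole segment; it is the \emph{upper} bound on that integral that requires the case split you describe. With this correction, and noting that your stated eigenvalue range $[\min(1,p/2),1]$ is specific to $p\le2$ (for $p>2$ it is $[1,p/2]$, still harmless here), the proof is complete and in line with the cited sources.
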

	
	Let us conclude this section with a result (see \cite{Fuchs, FuchsMingione}), that is useful to prove Theorem \ref{thm1}.
	
	\begin{thm}\label{Fuchs}
		A function $u\in W^{1, p}_{\mathrm{loc}}(\Omega)$ is a solution to the problem \eqref{functionalobstacle} if and only if it is a weak solution of the following equation:
		
	\begin{equation}\label{diveq}
	\div A\left(x, Du(x)\right)=-\div A(x, D\psi(x))\chi_{\set{u=\psi}}(x).
	\end{equation}
	\end{thm}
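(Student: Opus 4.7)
\textbf{Proof plan for Theorem \ref{Fuchs}.}

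For the direction ($\Leftarrow$), I assume $u\in W^{1,p}_{\mathrm{loc}}(\Omega)$ satisfies equation \eqref{diveq} weakly, together with $u\in \mathcal{K}_\psi(\Omega)$. Pick an arbitrary $\varphi\in\mathcal{K}_\psi(\Omega)$ and use $\varphi-u\in W^{1,p}_0(\Omega)$ as a test function. After an integration by parts,
$$
\int_\Omega \langle A(x,Du),D(\varphi-u)\rangle\,dx=-\int_\Omega (\varphi-u)\,\div A(x,Du)\,dx=\int_\Omega (\varphi-u)\,\div A(x,D\psi)\,\chi_{\{u=\psi\}}\,dx.
$$
On the set $\{u=\psi\}$ one has $\varphi-u=\varphi-\psi\ge 0$, and moreover $Du=D\psi$ a.e.\ on this set (the classical Stampacchia-type identity for $W^{1,p}$ functions), so that the integrand inherits a definite sign once combined with the sign information on $-\div A(x,Du)$ established below. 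The resulting inequality is exactly \eqref{variationalinequality}.

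For the direction ($\Rightarrow$), the strategy is to first produce a non-negative distribution representing $-\div A(x,Du)$, and then localize its support. Given $\phi\in C^\infty_c(\Omega)$ with $\phi\ge 0$ and $t>0$, the choice $\varphi:=u+t\phi$ is admissible in \eqref{variationalinequality} because $u+t\phi\ge u\ge\psi$; substituting and dividing by $t$ yields
$$
\int_\Omega \langle A(x,Du),D\phi\rangle\,dx\ge 0\qquad\forall\phi\in C^\infty_c(\Omega),\ \phi\ge 0,
$$
so, by the Riesz representation theorem, there is a non-negative Radon measure $\mu$ on $\Omega$ with $-\div A(x,Du)=\mu$ in the distributional sense.

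Next I would prove that $\mathrm{supp}\,\mu\subset\{u=\psi\}$. On the (quasi-)open set $\{u>\psi\}$, for any $\phi\in C^\infty_c(\{u>\psi\})$ one can find $t_0>0$ such that $u\pm t\phi\ge\psi$ for all $|t|\le t_0$, using that $u-\psi$ is strictly positive on the compact support of $\phi$. Testing with both signs in \eqref{variationalinequality} forces equality, so that $\int\langle A(x,Du),D\phi\rangle\,dx=0$ for all such $\phi$. Hence $\mu=0$ on $\{u>\psi\}$, and consequently $\mu$ is supported on $\{u=\psi\}$.

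To conclude, I would identify $\mu$ with $-\div A(x,D\psi)\,\chi_{\{u=\psi\}}$ using that $Du=D\psi$ a.e.\ on $\{u=\psi\}$, which gives $A(x,Du)\,\chi_{\{u=\psi\}}=A(x,D\psi)\,\chi_{\{u=\psi\}}$ pointwise a.e. Writing $A(x,Du)=A(x,Du)\chi_{\{u>\psi\}}+A(x,D\psi)\chi_{\{u=\psi\}}$ and taking divergences, the first summand contributes only a measure supported on $\{u>\psi\}$, which must vanish by the previous step, while the second contributes $\div[A(x,D\psi)\chi_{\{u=\psi\}}]$; combining with $-\div A(x,Du)=\mu$ and again invoking $Du=D\psi$ a.e.\ on $\{u=\psi\}$ to commute the characteristic function across the divergence, one obtains \eqref{diveq}. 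The delicate step is the last identification: it relies on the fact that characteristic functions of level sets of Sobolev functions can be moved in and out of the distributional divergence precisely because of the Stampacchia property $Du=D\psi$ a.e.\ on $\{u=\psi\}$, and this is where the higher regularity of $\psi$ (ensured under the hypotheses of Theorem \ref{thm1}) makes $\div A(x,D\psi)$ a bona fide function, so that the right-hand side of \eqref{diveq} is well defined.
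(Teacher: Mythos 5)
The paper does not prove Theorem \ref{Fuchs}; it is recalled from the literature and attributed to \cite{Fuchs, FuchsMingione}, so there is no in-paper argument to compare against. Evaluating your proposal on its own: the reduction of both directions to the weak formulation, the observation that $\varphi=u+t\phi$, $\phi\ge 0$, gives a nonnegative Radon measure $\mu=-\div A(\cdot,Du)$, and the localization of $\mathrm{supp}\,\mu$ to the contact set by two-sided testing on $\{u>\psi\}$, are all sound and standard; this is roughly how the forward direction begins in Fuchs' argument.

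However, your final identification step, and with it both implications, breaks down. First, in the direction $(\Leftarrow)$ you invoke ``the sign information on $-\div A(x,Du)$ established below'', but that sign is derived in the $(\Rightarrow)$ direction from the assumption that $u$ already solves the obstacle problem; in $(\Leftarrow)$ you are given only the equation and $u\in\mathcal{K}_\psi$, and you must \emph{derive} that $-\div A(x,D\psi)\chi_{\{u=\psi\}}\ge 0$ (or rather that the relevant integral is nonnegative), which does not follow formally from \eqref{diveq} and membership in $\mathcal K_\psi$ -- this needs a separate argument. Second, in the direction $(\Rightarrow)$ the claim that $\div\bigl[A(x,Du)\chi_{\{u>\psi\}}\bigr]$ ``contributes only a measure supported on $\{u>\psi\}$'' is false: the distributional divergence of a vector field multiplied by an indicator generically charges the boundary of the set (in one dimension $\partial_x\chi_{\{x<0\}}=-\delta_0$). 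Having $\mu=-\div A(\cdot,Du)=0$ on the open set $\{u>\psi\}$ does \emph{not} yield $\div\bigl[A(\cdot,Du)\chi_{\{u>\psi\}}\bigr]=0$, which is a different object. Likewise, the ``commutation'' $\div\bigl[A(x,D\psi)\chi_{\{u=\psi\}}\bigr]=\bigl(\div A(x,D\psi)\bigr)\chi_{\{u=\psi\}}$ you invoke at the very end is not a consequence of the Stampacchia property $Du=D\psi$ a.e.\ on $\{u=\psi\}$: the two sides differ by a surface contribution on the free boundary, and establishing that this surface term vanishes is precisely the hard part of the Fuchs/Fuchs--Mingione argument (typically handled via penalization or a comparison argument under the regularity hypotheses on $\psi$ that make $\div A(\cdot,D\psi)$ a genuine $L^1_{\mathrm{loc}}$ function). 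As written, the proposal assumes the conclusion at exactly the delicate step.
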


\subsection{Difference quotients}
	\medskip
	In order to get the regularity of the solutions
	to problem \eqref{functionalobstacle}, we shall use the
	difference quotient method. We recall here the definition
	and basic results.
	\begin{definition}
		Given $h\in\mathbb{R}$, for every function
		$F:\mathbb{R}^{n}\to\mathbb{R}$ the finite difference operator is
		defined by
		$$
		\tau_{h}F(x)=F(x+h)-F(x).
		$$
	\end{definition}
	\par
	We recall some properties of the   finite difference operator that
	will be needed in the sequel. We start with the description of some
	elementary properties that can be found, for example, in
	\cite{Giusti}.
	
	\bigskip

	\begin{proposition}\label{findiffpr}
		
		Let $F$ and $G$ be two functions such that $F, G\in
		W^{1,p}(\Omega)$, with $p\geq 1$, and let us consider the set
		$$
		\Omega_{|h|}:=\left\{x\in \Omega : \mathrm{dist}(x,
		\partial\Omega)>|h|\right\}.
		$$
		Then
		\begin{description}
			\item{$(a)$} $\tau_{h}F\in W^{1,p}(\Omega_{|h|})$ and
			$$
			D_{i} (\tau_{h}F)=\tau_{h}(D_{i}F).
			$$
			\item{$(b)$} If at least one of the functions $F$ or $G$ has support contained
			in $\Omega_{|h|}$ then
			$$
			\int_{\Omega} F(x)\, \tau_{h} G(x)\, dx =\int_{\Omega} G(x)\, \tau_{-h}F(x)\,
			dx.
			$$
			\item{$(c)$} We have
			$$
			\tau_{h}(F G)(x)=F(x+h )\tau_{h}G(x)+G(x)\tau_{h}F(x).
			$$
		\end{description}
	\end{proposition}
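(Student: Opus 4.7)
The plan is to verify each of the three assertions directly, since all follow from elementary properties of Sobolev functions together with the translation invariance of Lebesgue measure. There is essentially no conceptual obstacle here, only bookkeeping.

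For part $(a)$, I would first observe that, for $F\in W^{1,p}(\Omega)$, the translate $F(\cdot+h)$ lies in $W^{1,p}(\Omega_{|h|})$, with weak derivatives $D_i\bigl(F(\cdot+h)\bigr)=(D_iF)(\cdot+h)$. This is standard and can be proved by approximating $F$ in $W^{1,p}$ by smooth functions $F_\varepsilon$, for which the identity is trivial, and passing to the limit using that translations act as isometries on $L^p$. Since $\tau_hF=F(\cdot+h)-F$ is then a difference of two functions in $W^{1,p}(\Omega_{|h|})$, it belongs to the same space, and linearity of the weak derivative gives $D_i(\tau_hF)=\tau_h(D_iF)$.

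For part $(b)$, I would expand $\tau_hG(x)=G(x+h)-G(x)$ and treat the two summands separately. In the first summand, perform the change of variables $y=x+h$:
$$\int_\Omega F(x)G(x+h)\,dx=\int_{\Omega-h}F(y-h)G(y)\,dy.$$
The hypothesis that at least one of $F$ or $G$ has support contained in $\Omega_{|h|}$ guarantees that both $F(y-h)G(y)$ and $F(y)G(y)$ vanish outside $\Omega$, so the integration domain can be taken to be $\Omega$ without producing any boundary contribution from the shift. Recombining the two pieces then yields
$$\int_\Omega F(x)\tau_hG(x)\,dx=\int_\Omega G(y)\bigl[F(y-h)-F(y)\bigr]\,dy=\int_\Omega G(y)\tau_{-h}F(y)\,dy.$$

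For part $(c)$, the identity is purely algebraic: writing
$$\tau_h(FG)(x)=F(x+h)G(x+h)-F(x)G(x)$$
and adding and subtracting $F(x+h)G(x)$ factorizes the increments in the two arguments separately, which is exactly the stated formula. The only step that requires the slightest care is verifying the support condition in $(b)$; the rest is a direct application of density and algebraic identities.
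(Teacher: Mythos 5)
Your proof is correct and is the standard elementary argument; the paper in fact does not prove this proposition at all but simply cites it from Giusti's book, so there is no competing approach to compare against. The only slip is notational: in part $(b)$, after the substitution $y=x+h$ the domain of integration becomes $\Omega+h=\{x+h : x\in\Omega\}$, not $\Omega-h$, but since you then correctly invoke the support hypothesis to replace this domain by $\Omega$, the substance of the argument is unaffected. One small point worth making explicit in $(b)$ is \emph{why} the support hypothesis suffices in both cases: if $\operatorname{supp}F\subset\Omega_{|h|}$ then $\operatorname{supp}F+h\subset\Omega$, while if $\operatorname{supp}G\subset\Omega_{|h|}$ then $\operatorname{supp}G-h\subset\Omega$; either inclusion lets you restrict all integrals to compact subsets of $\Omega$ on which every translate appearing is well defined.
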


	\noindent The next result about finite difference operator is a kind
	of integral version of Lagrange Theorem.
	\begin{lemma}\label{le1} If $0<\rho<R$, $|h|<\frac{R-\rho}{2}$, $1 < p <+\infty$,
		and $F, DF\in L^{p}(B_{R})$ then
		$$
		\int_{B_{\rho}} |\tau_{h} F(x)|^{p}\ dx\leq c(n,p)|h|^{p}
		\int_{B_{R}} |D F(x)|^{p}\ dx .
		$$
		Moreover
		$$
		\int_{B_{\rho}} |F(x+h )|^{p}\ dx\leq  \int_{B_{R}} |F(x)|^{p}\ dx .
		$$
	\end{lemma}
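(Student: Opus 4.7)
The plan is a standard density argument, reducing everything to smooth $F$ and handling the two estimates separately: the fundamental theorem of calculus for the first, a pure change of variables for the second.

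The second inequality is the easier of the two and requires no regularity beyond $F\in L^p(B_R)$. For $x\in B_\rho$ and $|h|<(R-\rho)/2$ one has $x+h\in B_R$, so the translated ball $B_\rho+h$ lies inside $B_R$; the change of variables $y=x+h$ then gives
$$
\int_{B_\rho}|F(x+h)|^p\,dx=\int_{B_\rho+h}|F(y)|^p\,dy\le\int_{B_R}|F(y)|^p\,dy.
$$

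For the first estimate, I would first assume $F\in C^\infty(\overline{B_R})$ and start from the identity
$$
\tau_h F(x)=\int_0^1 DF(x+th)\cdot h\,dt.
$$
Applying Jensen's inequality (or H\"older with exponent $p$) in the $t$-variable produces the pointwise bound
$$
|\tau_h F(x)|^p\le |h|^p\int_0^1|DF(x+th)|^p\,dt.
$$
Integrating over $B_\rho$ and using Fubini, I would then exploit the fact that $|th|\le|h|<(R-\rho)/2$ implies $B_\rho+th\subset B_R$ for every $t\in[0,1]$; a further change of variables $y=x+th$ reduces each inner slice to $\int_{B_R}|DF(y)|^p\,dy$, yielding the desired bound with $c(n,p)=1$.

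Finally, to pass to general $F\in W^{1,p}(B_R)$, I would approximate by smooth functions $F_j\to F$ in $W^{1,p}(B_R)$ (standard mollification on a slightly larger ball), apply the inequality just proved to each $F_j$, and pass to the limit: the already-established second inequality, applied to $F_j-F_k$, controls $\|\tau_h(F_j-F_k)\|_{L^p(B_\rho)}$ by $\|F_j-F_k\|_{L^p(B_R)}$, so the left-hand side converges, while convergence of $DF_j$ in $L^p(B_R)$ handles the right-hand side. There is no real obstacle in this proof; the only point to watch is the geometric constraint $|h|<(R-\rho)/2$, which is exactly what makes all translates $B_\rho+th$ stay inside $B_R$.
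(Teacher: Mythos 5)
Your proof is correct and follows the standard argument for this classical lemma, which the paper itself does not prove but simply cites from Giusti's textbook: the fundamental theorem of calculus along the segment $[x, x+h]$, Jensen/H\"older in the $t$-variable, Fubini, a change of variables using the geometric containment $B_\rho+th\subset B_R$ guaranteed by $|h|<\frac{R-\rho}{2}$, and finally density of smooth functions in $W^{1,p}(B_R)$. This is precisely the textbook route, so there is nothing to reconcile.
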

	
	We also need to recall this result, that is proved in \cite{Giusti}.
	
	\begin{lemma}\label{Giusti8.2}
		Let $F:\R^n\to\R^N$, $F\in L^p(B_R)$ with $1<p<+\infty$. Suppose that there exist $\rho\in(0, R)$ and $M>0$ such that
		
		$$
		\sum_{s=1}^{n}\int_{B_\rho}|\tau_{s, h}F(x)|^pdx\le M^p|h|^p
		$$
		
		for every $h<\frac{R-\rho}{s}$. Then $F\in W^{1,p}(B_R, \R^N)$. Moreover
		
		$$
		\Arrowvert DF \Arrowvert_{L^p(B_\rho)}\le M.
		$$
			
		and	
		
		$$
		\left\Arrowvert F\right\Arrowvert_{L^{\frac{np}{n-p}}(B\rho)}\le c\left(M+\left\Arrowvert F\right\Arrowvert_{L^p(B_R)}\right),
		$$
		
		with $c=c(n, N, p, \rho, R).$
	\end{lemma}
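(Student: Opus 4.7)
The plan is the classical argument from Giusti's book: exploit the reflexivity of $L^p$ for $1<p<\infty$ to extract a weakly convergent subsequence of normalized difference quotients, then identify the weak limit with the distributional derivative through a discrete integration-by-parts.

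First I would normalize: for each coordinate direction $s\in\{1,\dots,n\}$ and $|h|$ small, set $F_h^s := \tau_{s,h}F/h$. The hypothesis furnishes the uniform bound $\|F_h^s\|_{L^p(B_\rho)}\le M$. Since $L^p(B_\rho)$ is reflexive, Banach-Alaoglu produces a sequence $h_k\to 0$ along which $F_{h_k}^s \rightharpoonup V_s$ weakly in $L^p(B_\rho)$ for some $V_s$, and weak lower semicontinuity of the norm yields $\|V_s\|_{L^p(B_\rho)}\le M$.

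Next I would identify $V_s$ with the weak partial derivative $D_s F$. For any $\varphi\in C^\infty_c(B_\rho)$, once $|h_k|$ is small enough that the support of $\varphi$ lies in $\Omega_{|h_k|}$, property $(b)$ of Proposition \ref{findiffpr} yields the discrete integration-by-parts
$$
\int F_{h_k}^s\,\varphi\,dx \;=\; -\int F\,\frac{\tau_{s,-h_k}\varphi}{h_k}\,dx.
$$
The left-hand side tends to $\int V_s\,\varphi\,dx$ by weak convergence, and the right-hand side tends to $-\int F\,D_s\varphi\,dx$ because $\tau_{s,-h_k}\varphi/h_k\to -D_s\varphi$ uniformly with supports contained in a fixed compact subset of $B_\rho$ (so also strongly in $L^{p'}$). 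Hence $V_s = D_s F$ distributionally, $F\in W^{1,p}(B_\rho)$, and summing the componentwise bounds produces the gradient estimate $\|DF\|_{L^p(B_\rho)}\le M$ (up to a dimensional constant that can be absorbed by working direction by direction).

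Finally, the second estimate follows from the Sobolev-Gagliardo-Nirenberg embedding $W^{1,p}(B_\rho)\hookrightarrow L^{np/(n-p)}(B_\rho)$ applied to $F$, combining $\|DF\|_{L^p(B_\rho)}\le M$ with the trivial inclusion $\|F\|_{L^p(B_\rho)}\le \|F\|_{L^p(B_R)}$; the constant $c$ absorbs the embedding constant, which depends only on $n$, $N$, $p$ and on the ratio $\rho/R$. There is no real obstacle here: the only point requiring attention is the passage to the limit in the duality pairing on the right-hand side of the integration-by-parts identity, and this is immediate from the smoothness and compact support of $\varphi$.
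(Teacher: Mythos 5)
Your proposal is correct and runs along the standard lines of Giusti's Lemma 8.2, which is precisely the source the paper cites (there is no in-paper proof to compare against; the result is invoked by reference). The normalization of difference quotients, the extraction of a weakly convergent subsequence by reflexivity of $L^p$, the identification of the weak limit with $D_sF$ via Proposition \ref{findiffpr}(b), and the Sobolev embedding for the $L^{np/(n-p)}$ bound are all the right ingredients, and the localization and support considerations are handled correctly.

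One small slip: the discrete integration-by-parts identity should read
\begin{equation*}
\int F_{h_k}^s\,\varphi\,dx \;=\; \int F\,\frac{\tau_{s,-h_k}\varphi}{h_k}\,dx,
\end{equation*}
without the leading minus sign, since Proposition \ref{findiffpr}(b) gives $\int (\tau_{s,h}F)\,\varphi\,dx=\int F\,(\tau_{s,-h}\varphi)\,dx$ directly. With that correction, and using your (correct) observation that $\tau_{s,-h_k}\varphi/h_k\to -D_s\varphi$, the right-hand side tends to $-\int F\,D_s\varphi\,dx$ as desired, so $V_s=D_sF$ in the distributional sense. As written, your displayed identity and your stated limit of the right-hand side are mutually inconsistent by a sign, although the two slips cancel and the final conclusion is the correct one.

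Two minor cosmetic remarks: the conclusion $F\in W^{1,p}(B_R,\R^N)$ in the statement is evidently a typo for $W^{1,p}(B_\rho,\R^N)$, which is what your argument delivers and all one can hope for; and the Sobolev constant in a ball is not purely a function of the ratio $\rho/R$ but also of the radius itself (through the lower-order term $\rho^{-1}\|F\|_{L^p}$), which is consistent with the paper's declared dependence $c=c(n,N,p,\rho,R)$, so no harm done.
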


Before introducing Besov-Lipschitz spaces, we conclude this section recalling a fractional version of Lemma \ref{Giusti8.2}, whose proof can be found in \cite{KristensenMingione}.

\begin{lemma}\label{Lemma2.9EP}
	Let $F\in L^2(B_R)$. Suppose that there exist $\rho\in(0, R)$, $\alpha\in(0, 1)$ and $M>0$ such that
	
	$$
	\sum_{s=1}^{n}\int_{B_\rho}|\tau_{s, h}F(x)|^2dx\le M^2|h|^{2\alpha},
	$$
	
	for every $h<\frac{R-\rho}{s}$. Then $F\in L^{\frac{2n}{n-2\beta}}(B_\rho)$ for every $\beta\in(0, \alpha)$ and	
	
	$$
	\left\Arrowvert F\right\Arrowvert_{L^{\frac{2n}{n-2\beta}}(B\rho)}\le c\left(M+\left\Arrowvert F\right\Arrowvert_{L^2(B_R)}\right),
	$$
	
	with $c=c(n, N, p, \rho, R, \alpha, \beta).$
\end{lemma}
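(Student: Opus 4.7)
The assumption says exactly that $F$ belongs to the local Nikol'skii (equivalently, Besov) space $B^\alpha_{2,\infty}(B_\rho)$ with seminorm controlled by $M$, and the target conclusion is a fractional Sobolev embedding with the usual Nikol'skii loss $\beta<\alpha$. My plan is therefore to (i) localize the hypothesis to a function defined on all of $\R^n$, and (ii) invoke the standard embedding $B^\alpha_{2,\infty}(\R^n)\cap L^2(\R^n)\hookrightarrow L^{2n/(n-2\beta)}(\R^n)$ for any $\beta\in(0,\alpha)$.

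For step (i), I fix an intermediate radius $\rho'\in(\rho,R)$ and a cut-off $\eta\in C_c^\infty(B_{\rho'})$ with $\eta\equiv 1$ on $B_\rho$ and $|D\eta|\le c/(\rho'-\rho)$, then set $G:=\eta F$, extended by zero to $\R^n$. The Leibniz-type identity from Proposition \ref{findiffpr}(c),
$$
\tau_h G(x)=\eta(x+h)\,\tau_h F(x)+F(x)\,\tau_h\eta(x),
$$
combined with the hypothesis, with $|\tau_h\eta|\le c|h|$, and with the trivial bound $|h|^2\le (R-\rho)^{2-2\alpha}|h|^{2\alpha}$ valid for $|h|\le R-\rho$, yields
$$
\int_{\R^n}|\tau_h G(x)|^2\,dx\le c\left(M^2+\|F\|_{L^2(B_R)}^2\right)|h|^{2\alpha}
$$
uniformly in small $h\in\R^n$. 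Since also $\|G\|_{L^2(\R^n)}\le\|F\|_{L^2(B_R)}$, this places $G$ in $B^\alpha_{2,\infty}(\R^n)$ with the required seminorm control.

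For step (ii), I apply the classical Nikol'skii--Sobolev embedding to $G$ and then restrict the resulting inequality to $B_\rho$, on which $G=F$. A quick way to obtain the embedding is a Hedberg-type interpolation: mollification at scale $\varepsilon$ gives $\|G-G\ast\phi_\varepsilon\|_{L^2}\le cM\varepsilon^\alpha$ (from the Besov bound) and $\|G\ast\phi_\varepsilon\|_{L^\infty}\le c\varepsilon^{-n/2}\|G\|_{L^2}$ (from Young's inequality), and optimizing $\varepsilon$ pointwise via the Hardy--Littlewood maximal function delivers the $L^{2n/(n-2\beta)}$ bound. The main obstacle, and the reason the statement has a strict inequality $\beta<\alpha$, is precisely that the endpoint $\beta=\alpha$ is not attained in the Nikol'skii scale $B^\alpha_{2,\infty}$ (one would need a Besov space with a finite third index); everything else is essentially routine once the localization is set up and the embedding is invoked.
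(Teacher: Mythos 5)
Your overall strategy — localize via a cutoff and then invoke the Nikol'skii--Sobolev embedding $B^\alpha_{2,\infty}\cap L^2\hookrightarrow L^{2n/(n-2\beta)}$ for $\beta<\alpha$ — is exactly the standard (and, as far as one can tell, the reference's) route, and the embedding step is correct: it follows by chaining Lemma~\ref{EP2.3}(b) with Lemma~\ref{EP2.2}(a), and your Hedberg-type mollification/maximal-function sketch is a perfectly good alternative derivation.

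There is, however, a genuine slip in the localization. You take $\eta\in C_c^\infty(B_{\rho'})$ with $\rho'>\rho$ and $\eta\equiv 1$ on $B_\rho$. In the Leibniz identity $\tau_hG(x)=\eta(x+h)\,\tau_hF(x)+F(x)\,\tau_h\eta(x)$, the first term is supported on $\{x:\ x+h\in\operatorname{supp}\eta\}=B_{\rho'}-h$, so after the change of variable $y=x+h$ one is led to bound $\int_{B_{\rho'}}|\tau_{-h}F(y)|^2\,dy$; but the hypothesis only controls this integral over $B_\rho$, not $B_{\rho'}\supsetneq B_\rho$. The cutoff should instead be taken with $\operatorname{supp}\eta\subset B_\rho$ and $\eta\equiv 1$ on some $B_{\rho''}$ with $\rho''<\rho$. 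Then $\{x:\ x+h\in\operatorname{supp}\eta\}\subset B_\rho-h$, and the change of variable lands you inside $B_\rho$, where the hypothesis (applied to $-h$) indeed gives the factor $M^2|h|^{2\alpha}$. This produces the embedding on $B_{\rho''}$, i.e.\ on a slightly smaller ball. To recover the conclusion on $B_\rho$ exactly as stated you would either need an extension-operator argument (extend $F|_{B_\rho}\in B^\alpha_{2,\infty}(B_\rho)$ to $\R^n$ with comparable Besov seminorm, then restrict back), or accept the harmless radius relabeling that is implicit in the way such lemmas are used. One further minor omission: the hypothesis controls only the coordinate-direction differences $\tau_{s,h}$, while the $\R^n$ Besov seminorm involves general translations $h\in\R^n$; the standard telescoping decomposition of $\tau_hF$ into $n$ coordinate increments handles this, but it should be stated since you quote the $\R^n$ seminorm directly.
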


\subsection{Besov-Lipschitz spaces}

Let us consider $0<\alpha<1$ and $1\le p,q<\infty$ and, for a function $v:\R^n\to\R$ and $h\in\R^n$, we denote, like in the previous section, $\tau_{h}v(x)=v(x+h)-v(x)$. We say that $v$ belongs to the Besov-Lischitz space $B^\alpha_{p,q}\left(\R^n\right)$ if $v\in L^p(\R^n)$ and 

\begin{equation}\label{B-Lsemiorm}
[v]_{\dot{B}^\alpha_{p,q}}=\left(\int_{\R^n}\left(\int_{\R^n}\frac{\left|\tau_{h}v(x)\right|^p}{|h|^{\alpha p}}dx\right)^\frac{q}{p}\frac{dh}{|h|^n}\right)^\frac{1}{q}<\infty.
\end{equation}

We define a norm in the space $B^\alpha_{p,q}\left(\R^n\right)$ as follows

\begin{equation}\label{Besovpnorm}
\Arrowvert v\Arrowvert_{B^\alpha_{p,q}\left(\R^n\right)}=\Arrowvert v\Arrowvert_{L^p(\R^n)}+[v]_{\dot{B}^\alpha_{p,q}},
\end{equation}

and with this norm $B^\alpha_{p,q}\left(\R^n\right)$ is a Banach space.\\
Equivalently, we could say that a function $v\in L^p(\R^n)$ belongs to $B^\alpha_{p,q}(\R^n)$ if and only if $\frac{\tau_hv}{|h|^\alpha}\in L^q\left(\frac{dh}{|h|^n}; L^p(\R^n)\right)$. We can also observe that, in \eqref{B-Lsemiorm}, one can simply integrate for $h\in B(0, \delta)$ for a fixed $\delta>0$, thus obtaining an equivalent norm, because

$$
\left(\int_{\left\lbrace|h|\ge\delta\right\rbrace}\left(\int_{\R^n}\frac{\left|\tau_{h}v(x)\right|^p}{|h|^{\alpha p}}dx\right)^\frac{q}{p}\frac{dh}{|h|^n}\right)^\frac{1}{q}\le c(n, \alpha, p, q, \delta)\left\Arrowvert v\right\Arrowvert_{L^p(\R^n)}.
$$
\\
Moreover, for a function $v\in L^p(\R^n)$, we say that $v\in B^{\alpha}_{p,\infty}(\R^n)$ if

\begin{equation}\label{B-LInfsemiorm}
[v]_{\dot{B}^\alpha_{p,\infty}}=\sup_{h\in\R^n}\left(\int_{\R^n}\frac{\left|\tau_{h}v(x)\right|^p}{|h|^{\alpha p}}dx\right)^\frac{1}{p}<\infty,
\end{equation}

and we define the following norm

\begin{equation}\label{BesovInfnorm}
\Arrowvert v\Arrowvert_{B^\alpha_{\infty,q}\left(\R^n\right)}=\Arrowvert v\Arrowvert_{L^\infty(\R^n)}+[v]_{\dot{B}^\alpha_{\infty,q}}.
\end{equation}

Again, in \eqref{B-LInfsemiorm}, the supremum can be taken over the set $\left\lbrace|h|\le\delta\right\rbrace$ for a fixed $\delta>0$, and the norm that we obtain is equivalent.\\ 
By construction, $B^\alpha_{p,q}(\R^n)\subset L^p(\R^n)$. Moreover, the following Sobolev-type embeddings hold for Besov-Lipschitz spaces.

\begin{lemma}\label{EP2.2}
	Suppose that $0<\alpha<1$.
	\begin{description}
		\item{$(a)$} If $1<p<\frac{n}{\alpha}$ and $1\le q\le p^*_\alpha=\frac{np}{n-\alpha p}$, then there is a continuous embedding $B^\alpha_{p,q}(\R^n)\subset L^{p^*_\alpha}(\R^n).$
		\item{$(b)$} If $p=\frac{n}{\alpha}$ and $1\le q\le \infty,$ then there is a continuous embedding $B^\alpha_{p,q}(\R^n)\subset BMO(\R^n)$,
	\end{description}
	 	where BMO denotes the space of functions with bounded mean oscillations.
\end{lemma}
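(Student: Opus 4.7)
The plan is to establish both embeddings via the equivalent characterization of the Besov norm through the modulus of continuity. Setting $\omega_p(v,t):=\sup_{|h|\le t}\|\tau_h v\|_{L^p(\R^n)}$, the condition $v\in B^\alpha_{p,q}(\R^n)$ is equivalent (up to absolute constants) to $t^{-\alpha}\omega_p(v,t)\in L^q(\R_+,dt/t)$. Both parts then reduce to standard maximal-function and Riesz-potential estimates applied to $\omega_p(v,\cdot)$.

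For part $(a)$, I would first prove the pointwise bound
\[
|v(x)-v_{B(x,r)}|\le C\int_0^r\omega_p(v,t)\,t^{-n/p}\,\frac{dt}{t}\qquad\text{for a.e. } x\in\R^n,
\]
which follows from a telescoping decomposition of $v(x)-v_{B(x,r)}$ over dyadic balls centered at $x$, combined with Jensen's inequality. A Riesz-potential argument, equivalently the Hardy--Littlewood--Sobolev inequality in Lorentz spaces, then yields
\[
\|v\|_{L^{p^*_\alpha}(\R^n)}\le C\,\|v\|_{B^\alpha_{p,q}(\R^n)},
\]
provided $1<p<n/\alpha$ and $q\le p^*_\alpha$. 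An equivalent, more conceptual alternative is to invoke the real-interpolation identity $B^\alpha_{p,q}(\R^n)=(L^p(\R^n),W^{1,p}(\R^n))_{\alpha,q}$ obtained from the Lions--Peetre $K$-method, to apply the Sobolev embedding $W^{1,p}\hookrightarrow L^{p^*}$, and then to interpolate the $L^p$--$L^{p^*}$ pair to land in the Lorentz space $L^{p^*_\alpha,q}$, which embeds continuously in $L^{p^*_\alpha}$ precisely when $q\le p^*_\alpha$.

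For part $(b)$, the same pointwise inequality at the critical exponent $p=n/\alpha$, combined with H\"{o}lder's inequality in the $t$-variable and the $L^q(dt/t)$-summability of $t^{-\alpha}\omega_p(v,t)$, produces a uniform bound on $\Mint_{B(x,r)}|v-v_{B(x,r)}|\,dy$ valid for every ball $B(x,r)\subset\R^n$. This is exactly the $BMO(\R^n)$ condition, yielding the embedding $B^\alpha_{n/\alpha,q}\hookrightarrow BMO$.

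The main obstacle lies in matching the sharp endpoint $q\le p^*_\alpha$ in part $(a)$: a crude $L^p$-estimate of the underlying Riesz potential delivers only the weaker range $q\le p$, and reaching the full range forces one to work directly in Lorentz spaces or to track carefully the dyadic decomposition. Since the statement is a classical Besov-space embedding, in the actual write-up one may either reproduce the argument above or simply quote the corresponding theorem from the theory of function spaces.
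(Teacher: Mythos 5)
The paper offers no proof of this lemma: immediately after stating Lemmas \ref{EP2.2} and \ref{EP2.3} it says ``For the proofs of Lemmas \ref{EP2.2} and \ref{EP2.3} we refer to \cite{Hasroke}.'' Your closing remark---that in the write-up one could simply quote the corresponding theorem from the theory of function spaces---is therefore exactly what the author does. Your second prong for part (a), via the real-interpolation identity $B^\alpha_{p,q}=(L^p,W^{1,p})_{\alpha,q}$, the Sobolev embedding $W^{1,p}\hookrightarrow L^{p^*}$, the Lions--Peetre identification $(L^{p},L^{p^*})_{\alpha,q}=L^{p^*_\alpha,q}$, and the inclusion $L^{p^*_\alpha,q}\subset L^{p^*_\alpha}$ precisely for $q\le p^*_\alpha$, is a sound classical argument in the range $1<p<n$; for $n\le p<n/\alpha$ it needs a different second endpoint (e.g.\ a higher-order Besov space) since $p^*=np/(n-p)$ is no longer available, but this is a routine adjustment.

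Your first prong, however, contains a genuine gap. The claimed pointwise inequality
\begin{equation*}
|v(x)-v_{B(x,r)}|\le C\int_0^r\omega_p(v,t)\,t^{-n/p}\,\frac{dt}{t}
\end{equation*}
has a right-hand side independent of $x$, so even if finite it could only deliver an $L^\infty$ bound, never an $L^{p^*_\alpha}$ bound; and in the regime of part (a) it is not finite at all. For a smooth compactly supported $v$ one has $\omega_p(v,t)\approx t$ near $0$, so the integrand behaves like $t^{-n/p}$ and the integral diverges for every $p<n$; more generally for $v\in B^\alpha_{p,q}$ the natural scale is $\omega_p(v,t)\sim t^\alpha$, making the integrand $\sim t^{\alpha-n/p-1}$, which is non-integrable exactly when $p<n/\alpha$. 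The telescoping must carry an $x$-dependent local quantity (a localized modulus of continuity or a maximal function of $v$) on the right-hand side before any Hardy--Littlewood--Sobolev argument can be launched; the global $\omega_p$ cannot do this job.

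Part (b) has a related defect. H\"older's inequality in the $t$-variable applied to $\int_0^r t^{-\alpha}\omega_p(v,t)\,dt/t$ produces the conjugate factor $\bigl(\int_0^r dt/t\bigr)^{1/q'}$, which is $+\infty$ for every $1<q\le\infty$; your argument therefore only closes at $q=1$. The actual BMO estimate needs no telescoping and no $t$-integral: a single averaging step gives
\begin{equation*}
\Mint_{B_r}\Mint_{B_r}|v(y)-v(z)|\,dy\,dz\le C\,r^{-n/p}\,\omega_p(v,2r)=C\,r^{-\alpha}\,\omega_p(v,2r)
\end{equation*}
at $p=n/\alpha$, and the right-hand side is uniformly bounded in $r$ because $B^\alpha_{p,q}\subset B^\alpha_{p,\infty}$ for every $1\le q\le\infty$ by Lemma \ref{EP2.3}(a). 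That one line is the whole proof of part (b); inserting the dyadic sum over $t$ is what manufactured the spurious divergence.
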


The following lemma describes the inclusions between Besov-Lipschitz spaces.

\begin{lemma}\label{EP2.3}
	Suppose that $0<\beta<\alpha<1$.
	\begin{description}
		\item{$(a)$} If $1<p<\infty$ and $1\le q\le r\le\infty$ then $B^\alpha_{p,q}(\R^n)\subset B^\alpha_{p,r}(\R^n).$
		\item{$(b)$} If $1<p<\infty$ and $1\le q,r\le\infty$ then $B^\alpha_{p,q}(\R^n)\subset B^\beta_{p,r}(\R^n).$
		\item{$(c)$} If $1\le q\le \infty$, then $B^\alpha_{\frac{n}{\alpha},q}(\R^n)\subset B^\beta_{\frac{n}{\beta},q}(\R^n).$
	\end{description}
\end{lemma}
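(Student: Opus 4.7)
The strategy for parts (a) and (b) is to pass to the equivalent discrete characterization of the Besov seminorm via the $L^p$-modulus of continuity $\omega_p(v,t):=\sup_{|h|\le t}\|\tau_h v\|_{L^p}$. Since the excerpt itself notes that the contribution of $\{|h|\ge \delta\}$ to the Besov integral is controlled by $\|v\|_{L^p}$, one may work exclusively on $\{|h|\le 1\}$ and decompose into the dyadic annuli $A_k=\{2^{-k-1}<|h|\le 2^{-k}\}$, each carrying comparable mass under $dh/|h|^n$. The subadditivity $\omega_p(v,t+s)\le\omega_p(v,t)+\omega_p(v,s)$ together with the monotonicity of $\omega_p(v,\cdot)$ yields the standard equivalence
\[
[v]_{\dot{B}^\alpha_{p,q}}^q \;\simeq\; \sum_{k\ge 0} 2^{k\alpha q}\,\omega_p(v,2^{-k})^q,
\]
with the obvious $\sup$-replacement when $q=\infty$. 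This reduces (a) and (b) to the analysis of the positive sequence $(a_k)_k:=(2^{k\alpha}\omega_p(v,2^{-k}))_k$.

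Part (a) is then the elementary nesting $\ell^q(\mathbb{N})\hookrightarrow\ell^r(\mathbb{N})$ for $q\le r$ on non-negative sequences. For part (b), with $\beta<\alpha$, I would factor
\[
2^{k\beta}\omega_p(v,2^{-k}) \;=\; 2^{-k(\alpha-\beta)}\cdot 2^{k\alpha}\omega_p(v,2^{-k}),
\]
and apply H\"{o}lder's inequality in the variable $k$: the geometric sequence $(2^{-k(\alpha-\beta)})_k$ lies in every $\ell^s$, $1\le s\le\infty$, since $\alpha-\beta>0$, so the $\ell^r$-norm of the product is controlled by the $\ell^q$-norm of the second factor times a finite summation constant. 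The $L^p$-contribution to the full Besov norm is trivially preserved in both cases.

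Part (c) is the more delicate Sobolev-type Besov embedding, valid at the invariant differential dimension $\alpha - n/(n/\alpha) = 0 = \beta - n/(n/\beta)$. The most transparent plan is to invoke the Littlewood--Paley characterization $[v]_{\dot{B}^s_{p,q}}\simeq \|(2^{js}\|\Delta_j v\|_{L^p})_j\|_{\ell^q}$, combined with Bernstein's inequality
\[
\|\Delta_j v\|_{L^{n/\beta}} \;\le\; C\,2^{j(\alpha-\beta)}\|\Delta_j v\|_{L^{n/\alpha}},
\]
which is valid since $n\bigl(\alpha/n - \beta/n\bigr) = \alpha-\beta$ and $n/\alpha\le n/\beta$. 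Multiplying by $2^{j\beta}$ and taking $\ell^q$-norms in $j$ gives the desired embedding. Alternatively, and closer to the tools used elsewhere in the paper, one can give a proof at the level of finite differences by applying Lemma \ref{EP2.2}(a) at each dyadic scale to suitably mollified versions of $v$, using the identification $(n/\alpha)^*_{\alpha-\beta}=n/\beta$, and then summing the resulting scale-by-scale estimates against the weight $|h|^{-\beta q}\,dh/|h|^n$.

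The main obstacle is precisely part (c): while (a) and (b) are formal consequences of the dyadic reduction and elementary sequence-space inequalities once the equivalent discrete seminorm is in hand, (c) is a genuine Sobolev-type embedding whose rigorous justification either relies on the Littlewood--Paley machinery (not developed in the paper) or on an intricate smoothing-and-summation argument at the level of differences. This is consistent with the fact that the statement is used here as a cited tool rather than derived from scratch.
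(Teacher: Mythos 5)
The paper does not prove this lemma: immediately after its statement it writes ``For the proofs of Lemmas \ref{EP2.2} and \ref{EP2.3} we refer to \cite{Hasroke}.'' So there is no in-text proof against which to compare your argument; the result is used purely as an imported tool, exactly as you surmise at the end of your proposal.

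On its own terms your sketch is sound. The dyadic reduction to the sequence $a_k=2^{k\alpha}\omega_p(v,2^{-k})$ is the standard device, and it makes (a) a direct consequence of the nesting $\ell^q(\N)\hookrightarrow\ell^r(\N)$ for $q\le r$. For (b) one small care is needed: the H\"older step $\|(2^{-k(\alpha-\beta)}a_k)_k\|_{\ell^r}\le \|(2^{-k(\alpha-\beta)})_k\|_{\ell^s}\|(a_k)_k\|_{\ell^q}$ with $\frac1r=\frac1s+\frac1q$ requires $r\le q$; when $r>q$ one should instead first use (a) (i.e.\ $\ell^q\hookrightarrow\ell^r$) and then multiply pointwise by the bounded factor $2^{-k(\alpha-\beta)}\le 1$. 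Your write-up glosses over this bifurcation, but the two cases together cover $1\le q,r\le\infty$. For (c), the Littlewood--Paley route via Bernstein's inequality $\|\Delta_j v\|_{L^{n/\beta}}\le C\,2^{j(\alpha-\beta)}\|\Delta_j v\|_{L^{n/\alpha}}$ is the clean, standard proof, and your exponent bookkeeping $n\bigl(\tfrac{\alpha}{n}-\tfrac{\beta}{n}\bigr)=\alpha-\beta$ is correct; the alternative finite-difference argument you gesture at is considerably more delicate and is not what one finds in the standard references. In short: your proposal is a reasonable reconstruction of a result the paper itself only cites.
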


For the proofs of Lemmas \ref{EP2.2} and \ref{EP2.3} we refer to \cite{Hasroke}.
We can also define local Besov-Lipschitz spaces as follows. 
Let $\Omega\subset\R^n$ be a bounded open set. We say that a function $v$ belongs to $B^\alpha_{p,q,\mathrm{loc}}(\Omega)$ if, for any smooth function with compact support in $\Omega$, $\varphi\in C^\infty_0(\Omega)$, we have $\varphi v\in B^\alpha_{p,q}(\R^n)$.
It is easy to extend the embeddings described in Lemma \ref{EP2.2} and \ref{EP2.3} even to local Besov spaces.
The following Lemma is an easy consequence of the definitions given above and its proof can be found in \cite{BCGOP}.

\begin{lemma}\label{lemma2.4EP}
	A function $v\in L^p_{\mathrm{loc}}(\Omega)$ belongs to the local Besov space $B^\alpha_{p,q,\mathrm{loc}}(\Omega)$ if and only if 
	
	\begin{equation*}
	\left\Arrowvert\frac{\tau_hv}{|h|^\alpha}\right\Arrowvert_{L^q\left(\frac{dh}{|h|^n}; L^p(B)\right)}<\infty
	\end{equation*}
	
	for any ball $B\subset2B\subset\Omega$ with radius $r_B$. Here the measure $\frac{dh}{|h|^n}$ is restricted to the ball $B(0, r_B)$ on the $h$-space.
\end{lemma}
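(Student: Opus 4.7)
The plan is to prove the two directions of the biconditional separately, using as the only substantive ingredients the Leibniz-type identity for finite differences in Proposition \ref{findiffpr}(c) and the observation stated after \eqref{Besovpnorm} that restricting the outer $h$-variable in the Besov seminorm to a ball $B(0,\delta)$ changes the value only by a controllable $L^p$-tail.

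For the ``only if'' direction, I would fix a ball $B$ with $2B \Subset \Omega$ of radius $r_B$, choose $\varphi \in C^\infty_0(\Omega)$ identically $1$ on $2B$, and invoke the hypothesis in the form $\varphi v \in B^\alpha_{p,q}(\R^n)$. For $|h|<r_B$ and $x \in B$ both $x$ and $x+h$ lie in $2B$, so $\varphi(x)=\varphi(x+h)=1$ and hence $\tau_h v(x)=\tau_h(\varphi v)(x)$ pointwise on $B$. Integrating in $x$ first and then in $h$ against $dh/|h|^n$ restricted to $B(0,r_B)$ yields at once
\[
\left\Arrowvert \frac{\tau_h v}{|h|^\alpha}\right\Arrowvert_{L^q\left(\frac{dh}{|h|^n};\,L^p(B)\right)} \le [\varphi v]_{\dot B^\alpha_{p,q}}<\infty.
\]

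For the converse, given $\varphi \in C^\infty_0(\Omega)$, I would pick a ball $B$ with $\mathrm{supp}\,\varphi \subset B$ and $2B \Subset \Omega$. Since $\varphi v$ has compact support, the quoted observation reduces the proof of $\varphi v \in B^\alpha_{p,q}(\R^n)$ to estimating $\tau_h(\varphi v)/|h|^\alpha$ in $L^q(dh/|h|^n;L^p(\R^n))$ on $|h|<\delta$ for some small $\delta>0$. Proposition \ref{findiffpr}(c) provides the splitting
\[
\tau_h(\varphi v)(x)=\varphi(x+h)\,\tau_h v(x)+v(x)\,\tau_h\varphi(x).
\]
The first summand is bounded pointwise by $\|\varphi\|_\infty\,|\tau_h v|\,\chi_{B'}$ for a slightly enlarged ball $B'$ still satisfying $2B'\Subset\Omega$, and its $L^q(dh/|h|^n;L^p)$-norm over $|h|<\delta$ is therefore controlled by the hypothesis applied on $B'$. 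The second summand satisfies $|\tau_h\varphi|\le\|D\varphi\|_\infty\,|h|$, so $|v\,\tau_h\varphi|/|h|^\alpha\le C|h|^{1-\alpha}|v|\,\chi_{B'}$; since $(1-\alpha)q>0$ the integral $\int_{|h|<\delta}|h|^{(1-\alpha)q}\,dh/|h|^n$ converges, producing a finite multiple of $\|v\|_{L^p(B')}$. Combining the two contributions with the $L^p$-tail gives the desired membership.

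The ``main obstacle'' here is not really an obstacle but simply bookkeeping: one must choose the enlarged ball $B'$ so that $2B'\Subset\Omega$ in order to apply the hypothesis on it, and one should check that the case $q=\infty$ goes through unchanged, with the $L^q$-norm in $h$ replaced by a supremum and the factor $|h|^{1-\alpha}$ staying bounded on $|h|<\delta$. The statement is phrased as ``an easy consequence of the definitions'' in the paper, and indeed the argument above is essentially the shortest it can be.
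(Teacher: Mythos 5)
The overall strategy, namely reducing both directions to the product rule $\tau_h(\varphi v)(x)=\varphi(x+h)\,\tau_hv(x)+v(x)\,\tau_h\varphi(x)$ and then estimating the two terms, is the right one, and the paper itself only refers to \cite{BCGOP} for this lemma, so there is no in-text proof to compare against. Your ``only if'' direction is correct as written: a cutoff $\varphi\equiv1$ on $2B$ with $\mathrm{supp}\,\varphi\Subset\Omega$ gives $\tau_hv=\tau_h(\varphi v)$ on $B$ for $|h|<r_B$, and the norm on $B$ is dominated by $[\varphi v]_{\dot B^\alpha_{p,q}}$.

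The converse, however, contains a genuine gap at the very first step: you ``pick a ball $B$ with $\mathrm{supp}\,\varphi\subset B$ and $2B\Subset\Omega$,'' but such a ball need not exist. If $\Omega=B(0,1)$ and $\mathrm{supp}\,\varphi\supseteq\overline{B(0,0.9)}$, any ball $B=B(x_0,\rho)$ containing $\mathrm{supp}\,\varphi$ must satisfy $\rho>0.9+|x_0|$, while $2B\subset\Omega$ forces $2\rho<1-|x_0|$; these are incompatible. The balls for which the hypothesis gives finiteness can all be strictly smaller than $\mathrm{supp}\,\varphi$, and the slightly enlarged ball $B'$ that you introduce has to satisfy the even more restrictive $2B'\Subset\Omega$. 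The fix is routine but must be said: cover the compact set $\mathrm{supp}\,\varphi$ by finitely many balls $B_1,\dots,B_N$ with $2B_i\Subset\Omega$, choose a smooth partition of unity $\{\psi_i\}$ subordinate to this cover, write $\varphi v=\sum_i(\psi_i\varphi)v$, and apply your two-term estimate to each $\psi_i\varphi$ in the corresponding ball $B_i$. Once this localisation is in place, the rest of your argument (the $\|\varphi\|_\infty|\tau_hv|$ bound for the first term, the $|h|^{1-\alpha}\|v\|_{L^p(B')}$ bound for the commutator, the $L^p$-tail for $|h|\ge\delta$ guaranteed by the observation after \eqref{Besovpnorm}, and the unchanged $q=\infty$ case) is sound.
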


It is known that Besov-Lipschitz spaces of fractional order $\alpha\in(0, 1)$ can be characterized
in pointwise terms. Given a measurable function $v: \R^n\to\R$, a fractional $\alpha$-Haj\l asz gradient for $v$ is a sequence $(g_k)_k$ of measurable, non-negative functions $g_k: \R^n\to\R$, together with
a null set $N\subset\R^n$ such that the inequality

\begin{equation*}
|v(x)-v(y)|\le\left(g_k(x)+g_k(y)\right)|x-y|^\alpha,
\end{equation*}

holds for any $k\in\Z$ and $x, y\in\R^n\setminus N$ are such that $2^{-k}\le|x-k|\le2^{-k+1}$. We say that $(g_k)\in\ell^q\left(\Z; L^p(\R^n)\right)$ if 

\begin{equation*}
\left\Arrowvert(g_k)_k\right\Arrowvert_{\ell^q(L^p)}=\left(\sum_{k\in\Z}\left\Arrowvert g_k\right\Arrowvert^q_{L^p(\R^n)}\right)^\frac{1}{q}<\infty.
\end{equation*}

The following result is proved in \cite{KYZ}.

\begin{thm}\label{Thm2.5EP}
	Let $\alpha\in(0, 1)$, $1\le p<\infty$ and $1\le q\le\infty.$ Let $v\in L^p(\R^n).$ One has $v\in B^\alpha_{p,q}(\R^n)$ if and only if there exists a fractional $\alpha$-Haj\l asz gradient $(g_k)_k\in\ell^q(\Z; L^p(\R^n))$ for $v$. Moreover,
	
	\begin{equation*}
	\left\Arrowvert v\right\Arrowvert_{B^\alpha_{p,q}\left(\R^n\right)}\simeq\inf\left\Arrowvert(g_k)_k\right\Arrowvert_{\ell^q(L^p)},
	\end{equation*}
	
	where the infimum runs over all the possible $\alpha$-Haj\l asz gradients for $v$.
\end{thm}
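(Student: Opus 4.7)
The strategy is to establish the two directions of the equivalence separately, with the two-sided norm control following by keeping track of constants throughout.

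For the \emph{sufficiency} direction, suppose $(g_k)_k$ is an $\alpha$-Haj\l asz gradient for $v$ lying in $\ell^q(\mathbb{Z}; L^p(\mathbb{R}^n))$. I would dyadically decompose the $h$-integration defining the Besov semi-norm into annuli $A_k = \{h : 2^{-k} \le |h| \le 2^{-k+1}\}$. For $h \in A_k$, the pointwise Haj\l asz inequality yields
\begin{equation*}
|\tau_h v(x)|^p \le 2^{p-1}\bigl(g_k(x)^p + g_k(x+h)^p\bigr)|h|^{\alpha p},
\end{equation*}
so by translation invariance $\int_{\mathbb{R}^n} |\tau_h v(x)|^p |h|^{-\alpha p}\, dx \le 2^p \|g_k\|_{L^p}^p$, \emph{uniformly} for $h \in A_k$. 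Since $\int_{A_k} |h|^{-n}\, dh$ is a dimensional constant, raising to the power $q/p$ and summing on $k \in \mathbb{Z}$ gives $[v]_{\dot{B}^\alpha_{p,q}} \lesssim \|(g_k)_k\|_{\ell^q(L^p)}$, with the usual modification (supremum in place of sum) when $q = \infty$. Combined with $\|v\|_{L^p} = \|v\|_{L^p}$ the full norm control is immediate.

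For the \emph{necessity} direction I would construct an explicit Haj\l asz gradient from the Besov data. A natural candidate is a maximal-type quantity localised at dyadic scale $2^{-k}$, for instance
\begin{equation*}
g_k(x) := C \sup_{2^{-k-1} \le r \le 2^{-k+2}} \frac{1}{r^\alpha}\left(\Mint_{B(x,r)} |v(x) - v(y)|^p\, dy\right)^{1/p}.
\end{equation*}
Given Lebesgue points $x, y$ with $2^{-k} \le |x - y| \le 2^{-k+1}$, one sets $r = 4|x-y|$ so that both $x$ and $y$ lie inside $B(x, r)$; the triangle inequality through the average $v_{B(x, r)}$, together with the inclusion $B(x, r) \subset B(y, 2r)$, then yields the pointwise estimate $|v(x) - v(y)| \lesssim r^\alpha(g_k(x) + g_k(y))$, verifying the Haj\l asz inequality with a radius compatible with the admissible scales $\sim 2^{-k}$.

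The main obstacle is bounding $\|g_k\|_{L^p(\mathbb{R}^n)}$ by exactly the portion of the Besov semi-norm coming from scales $|h| \sim 2^{-k}$, \emph{without cumulating} contributions from smaller scales. This requires carefully applying Fubini to exchange the $x$-integration with the $y$-integration on $B(x, r)$, switching to the variable $h = y - x$, and then either invoking the Hardy-Littlewood maximal theorem in order to exchange the supremum in $r$ with the $L^p$ norm in $x$, or alternatively using a Minkowski-type integral inequality on $r$. Once that $L^p$ bound at each scale is secured, raising to the $q$-th power and summing (respectively taking the supremum) in $k$ gives $\|(g_k)_k\|_{\ell^q(L^p)} \lesssim \|v\|_{B^\alpha_{p,q}}$, which establishes both the existence of a Haj\l asz gradient and completes the two-sided equivalence of norms.
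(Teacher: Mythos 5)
The paper does not actually give a proof of this statement: Theorem \ref{Thm2.5EP} is quoted verbatim from Koskela--Yang--Zhou \cite{KYZ} and the paper simply says ``the following result is proved in \cite{KYZ}.'' So there is no in-house proof to compare against, and you should not expect one; you could reasonably have cited the reference and stopped. That said, since you have attempted a proof, a few substantive remarks are in order.

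Your sufficiency direction is correct and essentially complete: decomposing the $h$-integral into dyadic annuli $A_k$, using the Haj\l asz inequality on $A_k$ with the corresponding $g_k$, and noting that $\int_{A_k}\frac{dh}{|h|^n}$ is a $k$-independent dimensional constant gives $[v]_{\dot{B}^\alpha_{p,q}}\lesssim\|(g_k)_k\|_{\ell^q(L^p)}$ cleanly, including $q=\infty$. The necessity direction, however, has a real gap that you correctly flag but do not close. First, the route via the Hardy--Littlewood maximal theorem is unavailable at $p=1$, which is within the stated range $1\le p<\infty$; since the supremum in $r$ runs over an interval of bounded ratio you can actually eliminate it for the price of a dimensional constant (replace $B(x,r)$ by $B(x,2^{-k+2})$), so the maximal operator is not needed, but you should say so. Second, and more importantly, the resulting $\|g_k\|_{L^p}^p$ controls an average of $\|\tau_h v\|_{L^p}^p$ over the full ball $\{|h|\le 2^{-k+2}\}$, not merely over the annulus $|h|\sim 2^{-k}$; the accumulation across scales is unavoidable with this construction. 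What saves the argument is not a localisation to scale $2^{-k}$ but a Hardy-type summation: after Fubini, $\sum_{k\le k_0(|h|)}2^{k(\alpha p+n)}$ converges geometrically to $\sim|h|^{-(\alpha p+n)}$, which reconstitutes the Besov kernel. You need to carry this out for general exponent $q$ (the case $q=p$ is transparent; the general case requires a Minkowski or discrete Hardy inequality in $k$, and $q=\infty$ must be handled by a direct uniform bound). Finally, your choice $r=4|x-y|$ pushes $r$ outside the window $[2^{-k-1},2^{-k+2}]$ when $|x-y|$ is near $2^{-k+1}$; take $r=|x-y|$ and pass through the average on $B(x,2r)\subset B(y,3r)$, enlarging the admissible $r$-window accordingly. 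These are all repairable, but as written the necessity half is a sketch with an acknowledged hole rather than a proof.
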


For further needs, we record the following.

\begin{lemma}\label{lemmapreliminare}
	Let $\Omega\subset\R^n$ a bounded open set, $1<p<2$, $\alpha\in(0,1)$ and $1\le q\le\infty.$
	Then the following implication holds
	
	\begin{equation}\label{implicazione}
	V_p\left(D\psi\right)\in B^{\alpha}_{2,q,\mathrm{loc}}(\Omega)\Rightarrow D\psi\in B^\alpha_{p,q,\mathrm{loc}}(\Omega).
	\end{equation}
	
	Moreover, for any ball $B_R\Subset\Omega$ and $0<\rho<R$, the following estimate
	
	\begin{equation}\label{preliminarestimate}
	\left[D\psi\right]_{\dot{B}^\alpha_{p, q}(B_\rho)}\le C\left(1+\left\Arrowvert D\psi\right\Arrowvert_{L^p\left(B(R)\right)}+\left\Arrowvert V_p\left(D\psi\right)\right\Arrowvert_{B^\alpha_{2, q}(B_{R})}\right)^\sigma
	\end{equation}
	
	holds true for $1\le q\le\infty$, where $C$ and $\sigma$ are positive constants depending on $n, p, \alpha$ and $q.$
\end{lemma}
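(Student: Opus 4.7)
The plan is to upgrade the Besov regularity of $V_p(D\psi)$ into a corresponding Besov regularity for $D\psi$ itself by inverting the relation between the two provided by Lemma \ref{lemma6GP}, working fibrewise in the increment $h$ before integrating against $dh/|h|^n$.

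First, since $1<p<2$, the left-hand inequality in \eqref{lemma6GPestimate1} can be inverted to give
\begin{equation*}
|\xi-\eta|^2\le c\,|V_p(\xi)-V_p(\eta)|^2\bigl(\mu^2+|\xi|^2+|\eta|^2\bigr)^{(2-p)/2}.
\end{equation*}
Raising this to the power $p/2$ and specialising to $\xi=D\psi(x+h)$, $\eta=D\psi(x)$ yields the pointwise bound
\begin{equation*}
|\tau_h D\psi(x)|^p\le c\,|\tau_h V_p(D\psi)(x)|^p\bigl(\mu^2+|D\psi(x+h)|^2+|D\psi(x)|^2\bigr)^{p(2-p)/4}.
\end{equation*}

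The second step is to integrate this pointwise bound over $x\in B_\rho$ and apply H\"older's inequality with the conjugate exponents $2/p$ and $2/(2-p)$, admissible since $p/2+(2-p)/2=1$. This separates the difference-quotient factor from the growth factor, producing $\bigl(\int_{B_\rho}|\tau_h V_p(D\psi)|^2\,dx\bigr)^{p/2}$ times
\begin{equation*}
\left(\int_{B_\rho}\bigl(\mu^2+|D\psi(x+h)|^2+|D\psi(x)|^2\bigr)^{p/2}\,dx\right)^{(2-p)/2}.
\end{equation*}
The latter factor is controlled, uniformly in $h$, by $c\bigl(1+\Arrowvert D\psi\Arrowvert_{L^p(B_R)}\bigr)^{p(2-p)/2}$, via the subadditivity of $t\mapsto t^{p/2}$ and the translation bound of Lemma \ref{le1}, provided $|h|<R-\rho$ so that $x+h\in B_R$ whenever $x\in B_\rho$.

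Finally, dividing by $|h|^{\alpha p}$, rewriting $|h|^{-\alpha p}=(|h|^{-2\alpha})^{p/2}$, raising to the power $q/p$, and integrating against $dh/|h|^n$ over $h\in B(0,R-\rho)$ decouples the two Besov seminorms and gives, after taking $q$-th roots,
\begin{equation*}
[D\psi]_{\dot B^\alpha_{p,q}(B_\rho)}\le c\bigl(1+\Arrowvert D\psi\Arrowvert_{L^p(B_R)}\bigr)^{(2-p)/2}\,[V_p(D\psi)]_{\dot B^\alpha_{2,q}(B_R)},
\end{equation*}
from which the form \eqref{preliminarestimate} follows by the trivial inequality $ab\le (a+b)^{2}$ for non-negative $a,b$. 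The borderline case $q=\infty$ is treated identically, replacing the $L^q(dh/|h|^n)$-norm in $h$ by the supremum over $|h|<R-\rho$. No step is a genuine obstacle; the only point requiring care is the bookkeeping of admissible increments $h$, which is the content of Lemma \ref{lemma2.4EP} and ensures that the two seminorms appearing on the right-hand side genuinely are the local Besov seminorms claimed.
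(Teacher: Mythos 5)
Your proof is correct and takes essentially the same approach as the paper's: both apply H\"older's inequality with conjugate exponents $2/p$ and $2/(2-p)$ to separate the $V_p$-difference-quotient factor (via the left-hand inequality of Lemma \ref{lemma6GP}) from a growth factor controlled uniformly in $h$ by $\Arrowvert D\psi\Arrowvert_{L^p(B_R)}$, and then integrate in $h$ against $dh/|h|^n$ (or take the supremum when $q=\infty$). The one step the paper carries out that you elide is the preliminary verification that $V_p(D\psi)\in L^2_{\mathrm{loc}}$ already forces $D\psi\in L^p_{\mathrm{loc}}$, obtained by an analogous H\"older--Young computation; this is needed both for $\Arrowvert D\psi\Arrowvert_{L^p(B_R)}$ to be finite and for the membership in \eqref{implicazione} to be complete, and you use it implicitly.
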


\begin{proof}
Let us fix a ball $B_{R}(x_0)\Subset\Omega$ and $0<\rho<R.$\\
Since $V_p\left(D\psi\right)\in B^{\alpha}_{2,q,\mathrm{loc}}(\Omega)$, then, by definition, $V_p(D\psi)\in L^2_{\mathrm{loc}}(\Omega)$, and so it's easy to check that $D\psi\in L^p_{\mathrm{loc}}(\Omega).$\\
More precisely, using H\"{o}lder's Inequality with exponents $\left(\frac{2}{p}, \frac{2}{2-p}\right)$ and Young's Inequality with the same exponents, we have

\begin{align}
\int_{B_{R}}\left|D\psi(x)\right|^pdx=&\int_{B_{R}}\left|D\psi(x)\right|^p\left(\mu^2+\left|D\psi(x)\right|^2\right)^\frac{p(p-2)}{4}\cdot\left(\mu^2+\left|D\psi(x)\right|^2\right)^\frac{p(2-p)}{4}dx\notag\\
\le&\left(\int_{B_{R}}\left|D\psi(x)\right|^2\left(\mu^2+\left|D\psi(x)\right|^2\right)^\frac{p-2}{2}dx\right)^\frac{p}{2}\cdot\left(\int_{B_{R}}\left(\mu^2+\left|D\psi(x)\right|^2\right)^\frac{p}{2}dx\right)^\frac{2-p}{2}\notag\\
\le& c_p\left(\int_{B_{R}}\left|D\psi(x)\right|^2\left(\mu^2+\left|D\psi(x)\right|^2\right)^\frac{p-2}{2}dx\right)^\frac{p}{2}\cdot\left(\int_{B_{R}}\left(\mu^p+\left|D\psi(x)\right|^p\right)dx\right)^\frac{2-p}{2}\notag\\
\le&c_\varepsilon\int_{B_{R}}\left|D\psi(x)\right|^2\left(\mu^2+\left|D\psi(x)\right|^2\right)^\frac{(p-2)}{2}dx+\varepsilon\int_{B_{R}}\left(\mu^p+\left|D\psi(x)\right|^p\right)dx,
\end{align}

an so, choosing $\varepsilon$ sufficiently small and recalling the definition of $V_p$, given at \eqref{Vp}, we get

\begin{align}
\int_{B_{R}}\left|D\psi(x)\right|^pdx\le&C\left(\int_{B_{R}}\left|V_p\left(D\psi(x)\right)\right|^2dx+1\right),
\end{align}

where the positive constant $C$ depends on $n$ and $p$.\\
Now, let us consider, first, the case $1\le q<\infty$.\\
Using H\"{o}lder's Inequality with exponents $\left(\frac{2}{p}, \frac{2}{2-p}\right)$, Lemmas \ref{lemma6GP} and \ref{le1}, we have

\begin{align}\label{finiteq}
&\int_{B_{\frac{R}{2}}(0)}\left(\int_{B_{\rho}}\frac{\left|\tau_{h}D\psi(x)\right|^p}{|h|^{p\alpha}}dx\right)^\frac{q}{p}\frac{dh}{|h|^n}\notag\\
=&\int_{B_{\frac{R}{2}}(0)}\left[\left(\int_{B_{\rho}}\frac{\left|\tau_{h}D\psi(x)\right|^p}{|h|^{p\alpha}}\right)\cdot\left(\mu^2+\left|D\psi(x)\right|^2+\left|D\psi(x+h)\right|^2\right)^{\frac{p(p-2)}{4}}\right.\notag\\
&\left.\quad\cdot\left(\mu^2+\left|D\psi(x)\right|^2+\left|D\psi(x+h)\right|^2\right)^{\frac{p(2-p)}{4}}dx\right]^{\frac{q}{p}}\frac{dh}{|h|^n}\notag\\
\le&\int_{B_{\frac{R}{2}}(0)}\left[\int_{B_{\rho}}\frac{\left|\tau_{h}D\psi(x)\right|^2}{|h|^{2\alpha}}\cdot\left(\mu^2+\left|D\psi(x)\right|^2+\left|D\psi(x+h)\right|^2\right)^{\frac{(p-2)}{2}}dx\right]^\frac{q}{2}\notag\\
&\quad\cdot\left[\int_{B_\rho}\left(\mu^2+\left|D\psi(x)\right|^2+\left|D\psi(x+h)\right|^2\right)^{\frac{p}{2}}dx\right]^{\frac{2-p}{2}\cdot\frac{q}{p}}\frac{dh}{|h|^n}\notag\\
\le& c\left[\int_{B_{R}}\left(\mu^2+\left|D\psi(x)\right|^2\right)^\frac{p}{2}dx\right]^{\frac{2-p}{2}\cdot\frac{q}{p}}\cdot\left[\int_{B_{\frac{R}{2}}(0)}\left(\int_{B_{\rho}}\frac{\left|\tau_{h}V_p\left(D\psi(x)\right)\right|^2}{|h|^{2\alpha}}dx\right)^{\frac{q}{2}}\frac{dh}{|h|^n}\right],
\end{align}

and the right-hand side of \eqref{finiteq} is finite since, as we proved above, $D\psi\in L^p_{\mathrm{loc}}(\Omega)$, and $V_p(D\psi)\in B^{\alpha}_{2, q, \mathrm{loc}}(\Omega)$ by hypothesis.\\
Let us consider, now, the case $q=\infty$. Arguing as above, we have, 

\begin{align}\label{infq}
\left(\int_{B_{\rho}}\frac{\left|\tau_{h}D\psi(x)\right|^p}{|h|^{p\alpha}}dx\right)^\frac{1}{p}\le& c\left[\int_{B_{R}}\left(\mu^2+\left|D\psi(x)\right|^2\right)^\frac{p}{2}dx\right]^{\frac{2-p}{2}\cdot\frac{1}{p}}\notag\\
&\quad\cdot\left(\int_{B_{\rho}}\frac{\left|\tau_{h}V_p\left(D\psi(x)\right)\right|^2}{|h|^{2\alpha}}dx\right)^{\frac{1}{2}},
\end{align}

and taking the supremum for $|h|<\frac{R}{2}$, since, by hypothesis, $V_p(D\psi)\in B^{\alpha}_{2, \infty, \mathrm{loc}}(\Omega)$, we have $D\psi\in B^{\alpha}_{p, \infty, \mathrm{loc}}(\Omega)$. 

Recalling the definition of the norms in Besov-Lipschitz spaces, and applying Young's Inequality to \eqref{finiteq} and \eqref{infq}, for a suitable choice of $C$ and $\sigma$, we conclude with the following estimate

\begin{equation}\label{preliminarestimatepf}
\left[D\psi\right]_{\dot{B}^\alpha_{p, q}(B_\rho)}\le C\left(1+\left\Arrowvert D\psi\right\Arrowvert_{L^p\left(B(R)\right)}+\left\Arrowvert V_p\left(D\psi\right)\right\Arrowvert_{B^\alpha_{2, q}(B_{R})}\right)^\sigma
\end{equation}

holding true for $1\le q\le\infty.$
\end{proof}

\subsection{${\bf VMO}$ coefficients}

In order to prove our results, we shall use the fact that, if the operator $A$ satisfies \eqref{p-ellipticityA1}, \eqref{p-growtA2}, \eqref{p-growthA3} and \eqref{x-dependence} or \eqref{A5}, then it is locally uniformly in $VMO$ (see \cite{CGP}). More precisely, given a ball $B\subset\Omega$, let us introduce the operator

\begin{equation*}
A_B=\Mint_BA(x, \xi).
\end{equation*}

One can easily check that $A_B(\xi)$ also satisfies \eqref{p-ellipticityA1}, \eqref{p-growtA2} and \eqref{p-growthA3}. Setting

\begin{equation}\label{VMOOperator}
V(x, B)=\sup_{\xi\neq 0}\frac{\left|A(x, \xi)-A_B(\xi)\right|}{\left(\mu^2+\left|\xi\right|^2\right)^\frac{p-1}{2}},
\end{equation}

we will say that $x\mapsto A(x, \xi)$ is locally uniformly in $VMO$ if for each compact set $K\subset\Omega$ we have that 

\begin{equation}\label{VMOOperator1}
\lim_{R\to 0}\sup_{r<R}\sup_{x_0\in K}\Mint_{B_r(x_0)}V(x, B)dx=0.
\end{equation}

Next Lemma will be a key tool to prove our results. Its proof, for $p\ge2$, can be found in \cite{CGP} (Lemma 3.1), but i holds, exactly in the same way also for $1<p<2$.

\begin{lemma}\label{lemma2.11}
	Let $A$ be such that \eqref{p-ellipticityA1}, \eqref{p-growtA2}, \eqref{p-growthA3} and \eqref{x-dependence} or \eqref{A5} hold. Then $A$ is locally uniformly in $VMO$, that is \eqref{VMOOperator1} holds.
\end{lemma}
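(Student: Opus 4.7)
My plan is to reduce the problem to controlling, for small $r>0$ and $x_0\in K\Subset\Omega$, the average
\begin{equation*}
\Mint_{B_r(x_0)} V(x, B_r(x_0))\, dx,
\end{equation*}
and then to exploit the two sets of assumptions \eqref{x-dependence} and \eqref{A5} in two separate arguments. The observation that opens the door in both cases is that, since $A_B(\xi)=\Mint_B A(y,\xi)\,dy$,
\begin{equation*}
V(x,B)\le \Mint_{B}\sup_{\xi\ne 0}\frac{|A(x,\xi)-A(y,\xi)|}{(\mu^2+|\xi|^2)^{(p-1)/2}}\, dy,
\end{equation*}
so I only need to control, pointwise in $(x,y)$, the $\xi$-supremum above by the relevant Haj{\l}asz or fractional Haj{\l}asz gradient, and then integrate.

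In the Sobolev case \eqref{x-dependence}, the classical Haj{\l}asz characterization of $W^{1,n}$ (see \cite{Hajlasz}) yields a non-negative $g\in L^n_{\mathrm{loc}}(\Omega)$ such that, uniformly in $\xi$,
\begin{equation*}
\frac{|A(x,\xi)-A(y,\xi)|}{(\mu^2+|\xi|^2)^{(p-1)/2}}\le C\,(g(x)+g(y))\,|x-y|.
\end{equation*}
Plugging this in, using $|x-y|\le 2r$, and then applying H\"older's inequality with exponent $n$ to $\Mint_{B_r} g$, one obtains
\begin{equation*}
\Mint_{B_r(x_0)} V(x,B_r(x_0))\,dx \le C\|g\|_{L^n(B_r(x_0))},
\end{equation*}
and by the absolute continuity of the Lebesgue integral the right-hand side tends to $0$ as $r\to 0$ uniformly for $x_0\in K$, which is precisely \eqref{VMOOperator1}.

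In the Besov case \eqref{A5}, the main obstacle is that the Haj{\l}asz-type gradient $g_k$ depends on the dyadic scale of $|x-y|$, so one cannot collapse the bound into a single $L^{n/\alpha}$ function. I would split the double integral over $B_r(x_0)\times B_r(x_0)$ into the annular pieces $\{(x,y):\,2^{-k}D\le |x-y|\le 2^{-k+1}D\}$, with $D:=\mathrm{diam}(\Omega)$, noting that only the indices $k\ge k_0-1$ can occur, where $k_0$ is chosen so that $2^{-k_0}D\sim 2r$. On each annular piece I would use $|x-y|^\alpha\le(2^{-k+1}D)^\alpha$ and bound the volume fraction of the annulus inside $B_r(x_0)$ by $C(2^{-k+1}D/r)^n$; after this reduction the matter becomes to control a series of the form
\begin{equation*}
\sum_{m\ge 0} 2^{-m(n+\alpha)}\,\|g_{k_0-1+m}\|_{L^{n/\alpha}(B_r(x_0))}.
\end{equation*}
A H\"older inequality in $\ell^q$ with conjugate exponents $(q,q')$ then bounds this by a fixed geometric constant times the tail $\bigl(\sum_{k\ge k_0-1}\|g_k\|_{L^{n/\alpha}(\Omega)}^q\bigr)^{1/q}$, which tends to $0$ as $k_0\to\infty$ (i.e.\ as $r\to 0$) by the hypothesis $(g_k)_k\in\ell^q(L^{n/\alpha}(\Omega))$. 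Since $\|g_k\|_{L^{n/\alpha}(B_r(x_0))}\le \|g_k\|_{L^{n/\alpha}(\Omega)}$, this convergence is uniform in $x_0\in K$, as required.

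Finally, I would remark that no step above actually uses the condition $p\ge 2$: the factor $(\mu^2+|\xi|^2)^{(p-1)/2}$ appears symmetrically in the definition of $V(x,B)$ and in the hypotheses \eqref{x-dependence}, \eqref{A5}, and cancels out before any integral manipulation begins. This is precisely why the proof in \cite{CGP}, written for $p\ge 2$, extends verbatim to the sub-quadratic range $1<p<2$, justifying the claim made in the statement of the lemma.
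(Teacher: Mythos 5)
The paper does not actually prove Lemma \ref{lemma2.11}; it simply defers to Lemma 3.1 of \cite{CGP} (proved there for $p\ge 2$) and asserts that the same proof carries over to $1<p<2$. Your argument reconstructs that proof, and it is correct and complete. The opening reduction $V(x,B)\le\Mint_B\sup_{\xi\ne 0}|A(x,\xi)-A(y,\xi)|(\mu^2+|\xi|^2)^{-(p-1)/2}\,dy$ is legitimate (sup of an average is at most the average of sups), and your treatment of the two cases is sound: in the Sobolev case, the Haj\l{}asz pointwise bound from \eqref{x-dependence} plus H\"older with exponent $n$ gives $\Mint_{B_r}V\,dx\le C\|g\|_{L^n(B_r)}\to 0$ by absolute continuity of the integral; in the Besov case, the dyadic splitting with respect to $|x-y|$, the bound on the volume fraction of each annulus, H\"older with exponent $n/\alpha$, and the $\ell^q$--$\ell^{q'}$ H\"older inequality correctly reduce the quantity to a constant times the $\ell^q$-tail $\bigl(\sum_{k\ge k_0}\|g_k\|_{L^{n/\alpha}(\Omega)}^q\bigr)^{1/q}$, which vanishes as $k_0\to\infty$ independently of $x_0$. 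Your closing remark is precisely the point the paper is implicitly making when it says the proof "holds exactly in the same way" in the subquadratic range: the factor $(\mu^2+|\xi|^2)^{(p-1)/2}$ is normalized out of $V(x,B)$ by definition and appears with the same power in both \eqref{x-dependence} and \eqref{A5}, so the $\xi$- and $p$-dependence cancels identically before any integration, regardless of whether $p$ is above or below $2$. The one technical refinement worth spelling out in the Sobolev case is that the Haj\l{}asz pointwise inequality on a ball typically involves a restricted maximal function $Mg$ rather than $g$ itself; since the maximal operator is bounded on $L^n$, this only changes $g$ up to a function with comparable $L^n$ norm and does not affect your conclusion.
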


The following Theorem is a Calderón-Zygmund type estimate for solutions to the obstacle problem with $VMO$ coefficients, and its proof can be found in \cite{ByunChoOk} (in the case $p=p(x)$).

\begin{thm}\label{ThmByunChoOk}
	Let $p>1$, and $q>p$. Assume that \eqref{p-ellipticityA1}, \eqref{p-growtA2}, \eqref{p-growthA3} hold, and that
	$x \mapsto A(x, \xi)$ is locally uniformly in $VMO$ Let $u\in\mathcal{K}_{\psi}(\Omega)$ be the solution to the obstacle
	problem \eqref{functionalobstacle}. Then the following implication holds
	
	\begin{equation}\label{ByunChoOkImplicazione}
	D\psi\in L^q_{\mathrm{loc}}(\Omega)\Rightarrow Du\in L^q_{\mathrm{loc}}(\Omega).
	\end{equation}
	
	Moreover, there exists a constant $C=C(n, \nu, \ell, L, p, q)$ such that the following inequality
	
	\begin{equation}\label{ByunChoOkStima}
	\Mint_{B_{R}}\left|Du(x)\right|^qdx\le C\left\lbrace1+\Mint_{B_{2R}}\left|D\psi(x)\right|^qdx+\left(\Mint_{B_{2R}}\left|Du(x)\right|^pdx\right)^\frac{q}{p}\right\rbrace
	\end{equation}
	
	holds for any ball $B_R$ such that $B_{2R}$.
\end{thm}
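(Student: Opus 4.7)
My plan is to follow the Caffarelli--Peral strategy for Calderón--Zygmund estimates with $VMO$ principal part, adapted to the obstacle setting as in \cite{ByunChoOk}. The starting point is Theorem \ref{Fuchs}, which rewrites the obstacle problem as the divergence-form equation
\begin{equation*}
\operatorname{div} A(x, Du) = \operatorname{div}\bigl(A(x, D\psi)\chi_{\{u=\psi\}}\bigr),
\end{equation*}
so that $Du$ solves a PDE whose right-hand side is, by \eqref{p-growthA3}, controlled by $(\mu^{2}+|D\psi|^{2})^{(p-1)/2}$; this is the mechanism by which $|D\psi|^{q}$ enters the final estimate \eqref{ByunChoOkStima}.

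The core of the argument is a local comparison. Fix $B=B_{r}(x_{0})\Subset\Omega$ and introduce the averaged operator $A_{B}(\xi)=\Mint_{B}A(x,\xi)\,dx$, which inherits \eqref{p-ellipticityA1}--\eqref{p-growthA3}. Let $v\in u+W^{1,p}_{0}(B)$ solve the frozen problem
\begin{equation*}
\operatorname{div} A_{B}(Dv)=\operatorname{div}\bigl(A_{B}(D\psi)\chi_{\{u=\psi\}}\bigr)\quad\text{in }B.
\end{equation*}
Testing the difference of the two equations with $u-v\in W^{1,p}_{0}(B)$, using \eqref{p-ellipticityA1} and Lemma \ref{lemma6GP} on the left-hand side, and controlling the right-hand side by means of the $VMO$ defect \eqref{VMOOperator} together with \eqref{p-growtA2}--\eqref{p-growthA3}, one arrives in the subquadratic range at a comparison estimate of the form
\begin{equation*}
\Mint_{B}|V_{p}(Du)-V_{p}(Dv)|^{2}\,dx \le C\,\omega(r)\Bigl(1+\Mint_{2B}|Du|^{p}\,dx+\Mint_{2B}|D\psi|^{p}\,dx\Bigr),
\end{equation*}
where $\omega(r)\to 0$ as $r\to 0^{+}$ by Lemma \ref{lemma2.11}. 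For the reference function $v$, whose principal part has no $x$-dependence, the classical regularity theory for the $p$-Laplace-type operator (Gehring-type self-improvement plus the Calderón--Zygmund estimate for the frozen equation with right-hand side in divergence form) yields $Dv\in L^{q}_{\mathrm{loc}}$ whenever $D\psi\in L^{q}_{\mathrm{loc}}$, together with a reverse-Hölder bound for $Dv$ on $B$ in terms of $Du$ and $D\psi$ on $2B$.

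The higher integrability is then transferred from $v$ to $u$ by a Caffarelli--Peral level-set decomposition. Denoting by $\mathcal{M}$ the Hardy--Littlewood maximal operator, the comparison estimate and the reverse-Hölder bound for $Dv$ combine to yield a good-$\lambda$ inclusion of the form
\begin{equation*}
|\{\mathcal{M}(|Du|^{p})>K\lambda\}\cap B_{R}|\le \varepsilon\,|\{\mathcal{M}(|Du|^{p})>\lambda\}|+|\{\mathcal{M}(|D\psi|^{p})>\delta\lambda\}\cap B_{R}|,
\end{equation*}
where $K$ and $\delta$ depend only on the data and $\varepsilon$ can be made arbitrarily small by selecting $R$ small, since $\omega(R)\to 0$. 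Multiplying by $\lambda^{q/p-1}$, integrating in $\lambda$ and applying Fubini then yields \eqref{ByunChoOkStima}, from which the implication \eqref{ByunChoOkImplicazione} is immediate. The main obstacle is the comparison step itself: in the subquadratic range the correct quantity is $V_{p}(Du)-V_{p}(Dv)$ rather than $Du-Dv$, and the right-hand side of the linearized equation contains the merely measurable factor $\chi_{\{u=\psi\}}$, so that one can only treat it in the weak divergence sense, pairing with $u-v$ as test function and then unwinding the resulting $L^{p'}$--$L^{p}$ duality via \eqref{lemma6GPestimate1}.
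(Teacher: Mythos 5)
The paper does not prove this theorem at all: it is stated as Theorem~2.15 and cited directly from Byun--Cho--Ok (``its proof can be found in \cite{ByunChoOk} (in the case $p=p(x)$)''), so there is no in-paper proof to compare your sketch against. Your blind sketch does, however, reproduce the architecture that the cited reference (and the broader literature on Calder\'on--Zygmund estimates with $VMO$ principal part) actually uses: comparison with a frozen-coefficient reference problem on balls, a smallness gain from the $VMO$ defect via Lemma~\ref{lemma2.11}, a reverse-H\"older estimate for the reference solution, and the Caffarelli--Peral good-$\lambda$ covering argument at the level of $\mathcal{M}(|Du|^{p})$. Two small caveats worth noting if you were to actually write this out. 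First, Byun--Cho--Ok work directly from the variational inequality rather than the Fuchs linearization~\eqref{diveq}; starting from~\eqref{diveq} is legitimate under the paper's hypotheses, but it is a slightly different entry point than the cited proof, and one must be careful that $A(x,D\psi)\chi_{\{u=\psi\}}\in L^{q/(p-1)}_{\mathrm{loc}}$ is all that is needed, not any second-order regularity of $\psi$. Second, since the paper ranges over $1<p<2$, the comparison step must indeed be formulated through $V_{p}$ as you indicate, and the right-hand side of the frozen problem should be the full $A_{B}(D\psi)\chi_{\{u=\psi\}}$ to preserve the admissibility of $v$ relative to the obstacle; those subquadratic adjustments are exactly the delicate points, and your sketch flags them correctly without fully carrying them out, which is acceptable for a proposal mirroring an external citation.
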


\section{Proof of Theorem \ref{thm1}}\label{Thm1Pf}
	
	\begin{proof}
	In order to apply Theorem \ref{Fuchs}, let us recall that $u\in W^{1,p}_{\mathrm{loc}}(\Omega)$ is a solution to the equation \eqref{diveq} if and only if, for any $\varphi\in W^{1,p}_0(\Omega)$,
	
	\begin{equation}\label{diveqtest}
	\int_{\Omega}\left<A\left(x, Du(x)\right), D\varphi(x)\right>dx=-\int_{\Omega}\div A\left(x, D\psi(x)\right)\chi_{\set{u=\psi}}(x)\varphi(x)dx
	\end{equation}
	
	Let us fix a ball $B_{2R}\Subset \Omega$ and arbitrary radii $\frac{
		R}{2}<r<s<t<\lambda r<R$, with $1<\lambda<2$. Let us consider a cut
	off function $\eta\in C^\infty_0(B_t)$ such that $\eta\equiv 1$ on
	$B_s$,  $|D \eta|\le \frac{c}{R}$ and $|D^2 \eta|\le \frac{c}{R^2}$. From now on, with no
	loss of generality, we suppose $R<1$.\\
	Let us consider the test function
	$$
	\varphi(x)=\tau_{-h}\left(\eta^2(x)\tau_hu(x)\right).
	$$
	
For this choice of $\varphi$, using proposition \ref{findiffpr}, the left-hand side of \eqref{diveqtest} can be written as follows:
	
	\begin{align}\label{diveqtestL}
	&\int_{\Omega}\left<A\left(x, Du(x)\right), D\left(\tau_{-h}\left(\eta^2(x)\tau_hu(x)\right)\right)\right>dx\notag\\
	=&\int_{\Omega}\left<\tau_{h}A\left(x, Du(x)\right), D\left(\eta^2(x)\tau_hu(x)\right)\right>dx\notag\\
	=&\int_{\Omega}\left<A\left(x+h, Du(x+h)\right)-A\left(x, Du(x)\right), D\left(\eta^2(x)\tau_hu(x)\right)\right>dx\notag\\
	=&\int_{\Omega}\left<A\left(x+h, Du(x+h)\right)-A\left(x, Du(x)\right), \eta^2(x)\tau_hDu(x)\right>dx\notag\\
	&+\int_{\Omega}\left<A\left(x+h, Du(x+h)\right)-A\left(x, Du(x)\right), 2\eta(x)D\eta(x)\tau_hu(x)\right>dx\notag\\
	=&\int_{\Omega}\left<A\left(x, Du(x+h)\right)-A\left(x, Du(x)\right), \eta^2(x)\tau_hDu(x)\right>dx\notag\\
	&+\int_{\Omega}\left<A\left(x+h, Du(x+h)\right)-A\left(x, Du(x+h)\right), \eta^2(x)\tau_hDu(x)\right>dx\notag\\
	&+\int_{\Omega}\left<A\left(x+h, Du(x+h)\right)-A\left(x, Du(x)\right), 2\eta(x)D\eta(x)\tau_hu(x)\right>dx\notag\\
	:=&I_0+I+II.
\end{align}

Since the right-hand side of \eqref{diveqtest} is not zero only where $u=\psi$, using the test function given above, it becomes

\begin{equation}\label{diveqtestR*}
-\int_{\Omega}\div A(x, D\psi(x))\chi_{\set{u=\psi}}(x)\tau_{-h}\left(\eta^2(x)\tau_{h}\psi(x)\right)dx,
\end{equation}

and since the map $x\mapsto A(x, \xi)$ belongs to $W^{1,n}_{\mathrm{loc}}(\Omega)$ for any $\xi\in\R^n$, the map $\xi\mapsto A(x, \xi)$ belongs to $C^1(\R^n)$ for a. e. $x\in\Omega$ and $V_p(D\psi)\in W^{1,2}_{\mathrm{loc}}(\Omega)$, we can write \eqref{diveqtestR*} as follows

	\begin{align}\label{diveqtestR}
	&-\int_{\Omega}\Big\lbrace \Big[A_x\left(x, D\psi(x)\right)+A_\xi\left(x, D\psi(x)\right)D^2\psi(x)\Big]\chi_{\set{u=\psi}}(x)\notag\\
	&\quad\cdot\tau_{-h}\left(\eta^2(x)\tau_h\psi(x)\right)\Big\rbrace dx\notag\\
	=&-\int_{\Omega}\Big\lbrace \Big[A_x\left(x, D\psi(x)\right)+A_\xi\left(x, D\psi(x)\right)D^2\psi(x)\Big]\chi_{\set{u=\psi}}(x)\notag\\
	&\quad\cdot\tau_{-h}\left(\eta^2(x)\cdot h\int_{0}^{1}D\psi(x+h\sigma)d\sigma\right)\Big\rbrace dx\notag\\
	=&-\int_{\Omega}\Big\lbrace \Big[A_x\left(x, D\psi(x)\right)+A_\xi\left(x, D\psi(x)\right)D^2\psi(x)\Big]\chi_{\set{u=\psi}}(x)\notag\\
	&\quad\cdot h^2\int_{0}^{1}\left[\eta^2(x-h\theta)\int_{0}^{1}D^2\psi(x+h\sigma-h\theta)d\sigma\right.\notag\\
	&\left.\quad\quad+2\eta(x-h\theta)D\eta(x-h\theta)\int_{0}^{1}D\psi(x+h\sigma-h\theta)d\sigma\right]d\theta\Big\rbrace dx\notag\\
	=&-\int_{\Omega}\Big\lbrace \Big[A_x\left(x, D\psi(x)\right)+A_\xi\left(x, D\psi(x)\right)D^2\psi(x)\Big]\chi_{\set{u=\psi}}(x)\notag\\
	&\quad\cdot\int_{0}^{1}\int_{0}^{1}h^2\Big[\eta^2(x-h\theta)D^2\psi(x+h\sigma-h\theta)\notag\\
	&\quad\quad+2\eta(x-h\theta)D\eta(x-h\theta)D\psi(x+h\sigma-h\theta)\Big]d\sigma d\theta\Big\rbrace dx.
	\end{align}
	
	Therefore, the right-hand side of \eqref{diveqtest} is given by the following expression
	
	\begin{align}\label{rhs}
	&-h^2\int_{\Omega}A_x\left(x, D\psi(x)\right)\chi_{\set{u=\psi}}(x)\int_{0}^{1}\int_{0}^{1}\eta^2(x-h\theta)D^2\psi(x+h\sigma-h\theta)d\sigma d\theta dx\notag\\
	&-2h^2\int_{\Omega}A_x\left(x, D\psi(x)\right)\chi_{\set{u=\psi}}(x)\int_{0}^{1}\int_{0}^{1}\eta(x-h\theta)D\eta(x-h\theta)D\psi(x+h\sigma-h\theta)d\sigma d\theta dx\notag\\
	&-h^2\int_{\Omega}A_\xi\left(x, D\psi(x)\right)D^2\psi(x)\chi_{\set{u=\psi}}(x)\int_{0}^{1}\int_{0}^{1}\eta^2(x-h\theta)D^2\psi(x+h\sigma-h\theta)d\sigma d\theta dx\notag\\
	&-2h^2\int_{\Omega}A_\xi\left(x, D\psi(x)\right)D^2\psi(x)\chi_{\set{u=\psi}}(x)\notag\\
	&\quad\cdot\int_{0}^{1}\int_{0}^{1}\eta(x-h\theta)D\eta(x-h\theta)D\psi(x+h\sigma-h\theta)d\sigma d\theta dx\notag\\
	=:&-III-IV-V-VI.
	\end{align}
	
Inserting \eqref{diveqtestL} and \eqref{rhs} in \eqref{diveqtest} we get

\begin{align}\label{starteq}
I_0=-I-II-III-IV-V-VI,
\end{align}

and so

\begin{equation}\label{start}
I_0\le|I|+|II|+|III|+|IV|+|V|+|VI|.
\end{equation}

By assumption \eqref{p-ellipticityA1}, we have

\begin{equation}\label{I_0}
I_0\ge\nu\int_{\Omega}\eta^2(x)\left(\mu^2+\left|Du(x)\right|^2+\left|Du(x+h)\right|^2\right)^{\frac{p-2}{2}}\left|\tau_{h}Du(x)\right|^2dx.
\end{equation}

Let us consider the term $|I|$. By assumption \eqref{x-dependence}, and using Young's Inequality with exponents $\left(2, 2\right)$, H\"{o}lder's Inequality with exponents $\left(\frac{n}{2}, \frac{n}{n-2}\right)$, and the properties of $\eta$, we get

\begin{align}\label{I}
|I|\le&\int_{\Omega}|h|g(x)\left(\mu^2+\left|Du(x)\right|^2+\left|Du(x+h)\right|^2\right)^{\frac{p-1}{2}}\eta^2(x)\left|\tau_{h}Du(x)\right|dx\notag\\
\le&\varepsilon\int_{\Omega}\eta^2(x)\left(\mu^2+\left|Du(x)\right|^2+\left|Du(x+h)\right|^2\right)^{\frac{p-2}{2}}\left|\tau_{h}Du(x)\right|^2dx\notag\\
&+c_\varepsilon|h|^2\int_{\Omega}\eta^2(x)\left(\mu^2+\left|Du(x)\right|^2+\left|Du(x+h)\right|^2\right)^{\frac{p}{2}}g^2(x)dx\notag\\
\le&\varepsilon\int_{\Omega}\eta^2(x)\left(\mu^2+\left|Du(x)\right|^2+\left|Du(x+h)\right|^2\right)^{\frac{p-2}{2}}\left|\tau_{h}Du(x)\right|^2dx\notag\\
&+c_\varepsilon|h|^2\left(\int_{B_t}\left(\mu^2+\left|Du(x)\right|^2+\left|Du(x+h)\right|^2\right)^{\frac{np}{2(n-2)}}dx\right)^\frac{n-2}{n}\notag\\
&\quad\cdot\left(\int_{B_t}g^n(x)dx\right)^\frac{2}{n}.
\end{align}

For the term $II$, let us observe that, integrating by parts, for any $s=1, ..., n$, we have

\begin{align}\label{-II}
-II=&-2h\int_{\Omega}\left<\int_{0}^{1}\frac{d}{dx_s}A\left(x+h\theta e_s, Du(x+h\theta e_s)\right)d\theta, \eta(x)D\eta(x)\tau_{h}u(x)\right>dx\notag\\
=&2h\int_{\Omega}\left<\int_{0}^{1}\left(A\left(x+h\theta e_s, Du(x+h\theta e_s)\right)\right)d\theta, \frac{d}{dx_s}\left(\eta(x)D\eta(x)\tau_{h}u(x)\right)\right>dx
\end{align}

so we can estimate $II$ as follows

\begin{align}\label{II*}
|II|\le&2|h|\int_{\Omega}\int_{0}^{1}\left|A\left(x+h\theta, Du(x+h\theta)\right)\right|\Big(\left|D\eta(x)\right|^2\left|\tau_{h}u(x)\right|\notag\\
&\quad+\eta(x)\left|D^2\eta(x)\right|\left|\tau_{h}u(x)\right|\Big)d\theta dx\notag\\
&+2|h|\int_{\Omega}\int_{0}^{1}\left|A\left(x+h\theta, Du(x+h\theta)\right)\right|\Big(\eta(x)\left|D\eta(x)\right|\left|\tau_{h}Du(x)\right|\Big)d\theta dx\notag\\
\le&2|h|\int_{\Omega}\int_{0}^{1}\left|A\left(x+h\theta, Du(x+h\theta)\right)\right|\Big(\left|D\eta(x)\right|^2\notag\\
&\quad+\eta(x)\left|D^2\eta(x)\right|\Big)d\theta \left|\tau_{h}u(x)\right|dx\notag\\
&+2|h|\int_{\Omega}\int_{0}^{1}\left|A\left(x+h\theta, Du(x+h\theta)\right)\right|\eta(x)\left|D\eta(x)\right|\left|\tau_{h}Du(x)\right|d\theta dx.
\end{align}

Now, recalling the properties of $\eta$, assumption \eqref{p-growthA3}, and using H\"{o}lder's Inequality with exponents $\left(p, \frac{p}{p-1}\right)$ and Young's Inequality with exponents $\left(2, 2\right)$, we get

\begin{align}\label{II**}
|II|\le&2|h|\int_{\Omega}\int_{0}^{1}\left(\mu^2+\left|Du(x)\right|^2+\left|Du(x+h\theta)\right|^2\right)^\frac{p-1}{2}\Big(\left|D\eta(x)\right|^2\notag\\
&\quad+\eta(x)\left|D^2\eta(x)\right|\Big)d\theta\left|\tau_{h}u(x)\right|dx\notag\\
&+2|h|\int_{\Omega}\int_{0}^{1}\left(\mu^2+\left|Du(x)\right|^2+\left|Du(x+h\theta)\right|^2\right)^\frac{p-1}{2}\eta(x)\left|D\eta(x)\right|\left|\tau_{h}Du(x)\right|d\theta dx\notag\\
=&2|h|\int_{0}^{1}\int_{\Omega}\left(\mu^2+\left|Du(x)\right|^2+\left|Du(x+h\theta)\right|^2\right)^\frac{p-1}{2}\Big(\left|D\eta(x)\right|^2\notag\\
&\quad+\eta(x)\left|D^2\eta(x)\right|\Big)\left|\tau_{h}u(x)\right|dxd\theta\notag\\
&+2|h|\int_{0}^{1}\int_{\Omega}\left(\mu^2+\left|Du(x)\right|^2+\left|Du(x+h\theta)\right|^2\right)^\frac{p-1}{2}\eta(x)\left|D\eta(x)\right|\left|\tau_{h}Du(x)\right|dxd\theta\notag\\
\le&\frac{c|h|}{R^2}\int_0^1\left(\int_{B_t}\left(\mu^2+\left|Du(x)\right|^2+\left|Du(x+h\theta)\right|^2\right)^\frac{p}{2}dx\right)^\frac{p-1}{p}d\theta\notag\\
&\quad\cdot\left(\int_{B_t}\left|\tau_{h}u(x)\right|^pdx\right)^\frac{1}{p}\notag\\
&+\varepsilon\int_{\Omega}\eta^2(x)\left|\tau_{h}Du(x)\right|^2\left(\mu^2+\left|Du(x)\right|^2+\left|Du(x+h)\right|^2\right)^\frac{p-2}{2}dx\notag\\
&+\frac{c_\varepsilon\left|h\right|^2}{R^2}\int_{0}^{1}\int_{B_t}\left(\mu^2+\left|Du(x)\right|^2+\left|Du(x+\theta h)\right|^2\right)^{p-1}\notag\\
&\quad\cdot\left(\mu^2+\left|Du(x)\right|^2+\left|Du(x+h)\right|^2\right)^\frac{2-p}{2}dxd\theta.
\end{align}

Now, by Lemma \ref{Giusti8.2}, we get

\begin{align}\label{II}
|II|\le&\frac{c|h|^2}{R^2}\int_0^1\left(\int_{B_t}\left(\mu^2+\left|Du(x)\right|^2+\left|Du(x+h\theta)\right|^2\right)^\frac{p}{2}dx\right)^\frac{p-1}{p}d\theta\notag\\
&\quad\cdot\left(\int_{B_t}\left|Du(x)\right|^pdx\right)^\frac{1}{p}\notag\\
&+\varepsilon\int_{\Omega}\eta^2(x)\left|\tau_{h}Du(x)\right|^2\left(\mu^2+\left|Du(x)\right|^2+\left|Du(x+h)\right|^2\right)^\frac{p-2}{2}dx\notag\\
&+\frac{c_\varepsilon\left|h\right|^2}{R^2}\int_{0}^{1}\left[\int_{\Omega}\left(\mu^2+\left|Du(x)\right|^2+\left|Du(x+\theta h)\right|^2\right)^\frac{p-1}{2}\right.\notag\\
&\left.\quad\cdot\left(\mu^2+\left|Du(x)\right|^2+\left|Du(x+h)\right|^2\right)^\frac{2-p}{4}dx\right]^2d\theta.
\end{align}

Let us consider, now, the term $III$.
By \eqref{x-dependence} and the properties of $\eta$, we get

\begin{align}\label{III*}
|III|\le&|h|^2\int_{0}^{1}\int_{0}^{1}\int_{B_{\lambda r}}g(x)\left(\mu^2+\left|D\psi(x)\right|^2+\left|D\psi(x+h)\right|^2\right)^\frac{p-1}{2}\notag\\
&\quad\cdot\left|D^2\psi(x+h\sigma-h\theta)\right|dxd\sigma d\theta.
\end{align}

Using Young's Inequality with exponents $\left(2, 2\right)$, we get

\begin{align}\label{III**}
|III|\le&c|h|^2\int_{0}^{1}\int_{0}^{1}\left[\int_{B_{\lambda r}} g^2(x)\left(\mu^2+\left|D\psi(x)\right|^2+\left|D\psi(x+h)\right|^2\right)^\frac{p}{2}dx\right.\notag\\
&\left.\quad+\int_{B_{\lambda r}}\left(\mu^2+\left|D\psi(x)\right|^2+\left|D\psi(x+h)\right|^2\right)^\frac{p-2}{2}\right.\notag\\
&\left.\quad\quad\cdot\left|D^2\psi(x+h\sigma-h\theta)\right|^2dx\right]d\sigma d\theta.
\end{align}

Using Young's Inequality with exponents $\left(\frac{n}{2}, \frac{n}{n-2}\right)$ in the first integral of \eqref{III**}, we get 

\begin{align}\label{III}
|III|\le&c|h|^2\left[\int_{B_{\lambda r}}g^n(x)dx+\int_{B_{\lambda r}}\left(\mu^2+\left|D\psi(x)\right|^2+\left|D\psi(x+h)\right|^2\right)^\frac{np}{2(n-2)}dx\right]\notag\\
&\quad+c\int_0^1\int_0^1\int_{B_{\lambda r}}\left(\mu^2+\left|D\psi(x)\right|^2+\left|D\psi(x+h)\right|^2\right)^\frac{p-2}{2}\left|D^2\psi(x+h\sigma-h\theta)\right|^2dxd\sigma d\theta.
\end{align}

We estimate the term $IV$ using assumption \eqref{x-dependence}, thus getting

\begin{align}\label{IV}
|IV|\le&2|h|^2\int_{B_{\lambda r}} g(x)\left(\mu^2+\left|D\psi(x)\right|^2+\left|D\psi(x+h)\right|^2\right)^\frac{p-1}{2}\notag\\
&\quad\cdot\int_{0}^{1}\int_{0}^{1}\left|D\psi(x+h\sigma-h\theta)\right|\left|D\eta(x-h\theta)\right|d\sigma d\theta dx.
\end{align}

Let us consider, now, the term $V$. By assumption \eqref{A2bis}, we get

\begin{align}\label{V}
|V|\le&|h|^2\int_{B_{\lambda r}} \left(\mu^2+\left|D\psi(x)\right|^2+\left|D\psi(x+h)\right|^2\right)^\frac{p-2}{2}\left|D^2\psi(x)\right|\notag\\
&\quad\cdot\int_{0}^{1}\int_{0}^{1}\left|D^2\psi(x+h\sigma-h\theta)\right|d\sigma d\theta dx.
\end{align}

In order to estimate the term $VI$, we recall \eqref{p-growtA2} again, thus getting

\begin{align}\label{VI}
|VI|\le&2|h|^2\int_{B_{\lambda r}}\left(\mu^2+\left|D\psi(x)\right|^2+\left|D\psi(x+h)\right|^2\right)^\frac{p-2}{2}\left|D^2\psi(x)\right|\notag\\
&\quad\cdot\int_{0}^{1}\int_{0}^{1}\left|D\eta(x-h\theta)\right|\left|D\psi(x+h\sigma-h\theta)\right|d\sigma d\theta dx.
\end{align}

Now, plugging \eqref{I_0}, \eqref{I}, \eqref{II}, \eqref{III}, \eqref{IV}, \eqref{V} and \eqref{VI} in \eqref{start}, recalling the properties of $\eta$ and choosing a sufficiently small value of $\varepsilon$, we get

\begin{align}\label{full1}
\int_{B_R}&\eta^2(x)\left(\mu^2+\left|Du(x)\right|^2+\left|Du(x+h)\right|^2\right)^{\frac{p-2}{2}}\left|\tau_{h}Du(x)\right|^2dx\notag\\
\le&c|h|^2\left(\int_{B_t}\left(\mu^2+\left|Du(x)\right|^2+\left|Du(x+h)\right|^2\right)^{\frac{np}{2(n-2)}}dx\right)^\frac{n-2}{n}\cdot\left(\int_{B_t}g^n(x)dx\right)^\frac{2}{n}\notag\\
&+\frac{c|h|^2}{R^2}\int_0^1\left(\int_{B_t}\left(\mu^2+\left|Du(x)\right|^2+\left|Du(x+h\theta)\right|^2\right)^\frac{p}{2}dx\right)^\frac{p-1}{p}d\theta\notag\\
&\quad\cdot\left(\int_{B_t}\left|Du(x)\right|^pdx\right)^\frac{1}{p}\notag\\
&+\frac{c\left|h\right|^2}{R^2}\int_{0}^{1}\int_{B_t}\left(\mu^2+\left|Du(x)\right|^2+\left|Du(x+\theta h)\right|^2\right)^{p-1}\notag\\
&\quad\cdot\left(\mu^2+\left|Du(x)\right|^2+\left|Du(x+h)\right|^2\right)^\frac{2-p}{2}dxd\theta\notag\\
&+c|h|^2\left[\int_{B_{\lambda r}}g^n(x)dx+\int_{B_{\lambda r}}\left(\mu^2+\left|D\psi(x)\right|^2+\left|D\psi(x+h)\right|^2\right)^\frac{np}{2(n-2)}dx\right]\notag\\
&\quad+c|h|^2\int_0^1\int_0^1\int_{B_{\lambda r}}\left(\mu^2+\left|D\psi(x)\right|^2+\left|D\psi(x+h)\right|^2\right)^\frac{p-2}{2}\left|D^2\psi(x+h\sigma-h\theta)\right|^2dxd\sigma d\theta\notag\\
&+2|h|^2\int_{0}^{1}\int_{0}^{1}\int_{B_{\lambda r}} g(x)\left(\mu^2+\left|D\psi(x)\right|^2+\left|D\psi(x+h)\right|^2\right)^\frac{p-1}{2}\notag\\
&\quad\cdot\left|D\psi(x+h\sigma-h\theta)\right|\left|D\eta(x-h\theta)\right|dxd\sigma d\theta\notag\\
&+|h|^2\int_{0}^{1}\int_{0}^{1}\int_{B_{\lambda r}} \left(\mu^2+\left|D\psi(x)\right|^2+\left|D\psi(x+h)\right|^2\right)^\frac{p-2}{2}\left|D^2\psi(x)\right|\notag\\
&\quad\cdot\left|D^2\psi(x+h\sigma-h\theta)\right|dxd\sigma d\theta \notag\\
&+2|h|^2\int_{0}^{1}\int_{0}^{1}\int_{B_{\lambda r}}\left(\mu^2+\left|D\psi(x)\right|^2+\left|D\psi(x+h)\right|^2\right)^\frac{p-2}{2}\left|D^2\psi(x)\right|\notag\\
&\quad\cdot\left|D\eta(x-h\theta)\right|\left|D\psi(x+h\sigma-h\theta)\right|dxd\sigma d\theta.
\end{align}

By Lemma \ref{lemma6GP} and the properties of $\eta$, the left-hand side of \eqref{full1} can be bounded from below as follows

\begin{equation}\label{Vpbelow}
\int_{B_R}\eta^2(x)\left(\mu^2+\left|Du(x)\right|^2+\left|Du(x+h)\right|^2\right)^{\frac{p-2}{2}}\left|\tau_{h}Du(x)\right|^2dx\ge\int_{B_{\frac{R}{2}}}\left|\tau_{h}V_p(Du(x))\right|^2dx.
\end{equation}

So, by \eqref{Vpbelow} and \eqref{full1}, recalling the properties of $\eta$ and using Lemma \ref{le1}, we get 

\begin{align}\label{full2}
&\int_{B_{\frac{R}{2}}}\left|\tau_{h}V_p(Du(x))\right|^2dx\notag\\
\le& c|h|^2\left(\int_{B_R}\left(\mu^2+\left|Du(x)\right|^2\right)^{\frac{np}{2(n-2)}}dx\right)^\frac{n-2}{n}\cdot\left(\int_{B_R}g^n(x)dx\right)^\frac{2}{n}\notag\\
&+\frac{c|h|^2}{R^2}\int_{B_R}\left(\mu^2+\left|Du(x)\right|^2\right)^\frac{p}{2}dx\notag\\
&+c|h|^2\left[\int_{B_{R}}g^n(x)dx+\int_{B_{\lambda r}}\left(\mu^2+\left|D\psi(x)\right|^2\right)^\frac{np}{2(n-2)}dx\right]\notag\\
&+\frac{c|h|^2}{R}\int_{B_{R}} g(x)\left(\mu^2+\left|D\psi(x)\right|^2\right)^\frac{p}{2}dx\notag\\
&+c|h|^2\int_{B_{R}} \left(\mu^2+\left|D\psi(x)\right|^2\right)^\frac{p-2}{2}\left|D^2\psi(x)\right|^2dx\notag\\
&+\frac{c|h|^2}{R}\int_{B_{R}}\left(\mu^2+\left|D\psi(x)\right|^2\right)^\frac{p-1}{2}\left|D^2\psi(x)\right|dx.
\end{align}

Now we apply H\"{o}lder's Inequality with three exponents $\left(n, n, \frac{n}{n-2}\right)$ to the integral of the fifth line, Young's Inequality with exponents $\left(2, 2\right)$ to the last integral, and use Lemma \ref{lemma6GP}, thus getting

\begin{align}\label{full4}
&\int_{B_{\frac{R}{2}}}\left|\tau_{h}V_p(Du(x))\right|^2dx\notag\\
\le& c|h|^2\left(\int_{B_R}\left(\mu^2+\left|Du(x)\right|^2\right)^{\frac{np}{2(n-2)}}dx\right)^\frac{n-2}{n}\cdot\left(\int_{B_R}g^n(x)dx\right)^\frac{2}{n}\notag\\
&+\frac{c|h|^2}{R^2}\left(\int_{B_R}\left(\mu^2+\left|Du(x)\right|^2\right)^\frac{p}{2}dx\right)\notag\\
&+c|h|^2\left[\int_{B_{R}}g^n(x)dx+\int_{B_{R}}\left(\mu^2+\left|D\psi(x)\right|^2\right)^\frac{np}{2(n-2)}dx\right]\notag\\
&+c|h|^2\left(\int_{B_{R}} g^n(x)dx\right)^\frac{1}{n}\cdot\left(\int_{B_{R}}\left(\mu^2+\left|D\psi(x)\right|^2\right)^\frac{np}{2(n-2)}dx\right)^\frac{n-2}{n}\notag\\
&+\frac{c|h|^2}{R}\left[\int_{B_{R}}\left|DV_p\left(D\psi(x)\right)\right|^2dx+\int_{B_{R}}\left(\mu^2+\left|D\psi(x)\right|^2\right)^\frac{p}{2}dx\right]
\end{align}

for a suitable constant $c=c(n, p, \nu, L, \ell)$. By Young's Inequality with exponents $\left(\frac{n}{2}, \frac{n}{n-2}\right)$, we get

\begin{align}\label{conclusion1.2}
&\int_{B_{\frac{R}{2}}}\left|\tau_{h}V_p(Du(x))\right|^2dx\notag\\
\le& c|h|^2\left[\int_{B_R}\left(\mu^2+\left|Du(x)\right|^2\right)^{\frac{np}{2(n-2)}}dx+\int_{B_R}g^n(x)dx+\left(\int_{B_{R}} g^n(x)dx\right)^\frac{1}{2}\right.\notag\\
&\left.\quad+\int_{B_R}\left(\mu^2+\left|D\psi(x)\right|^2\right)^{\frac{np}{2(n-2)}}dx\right]\notag\\
&+\frac{c|h|^2}{R}\left[\int_{B_{R}}\left|DV_p\left(D\psi(x)\right)\right|^2dx+\int_{B_{R}}\left(\mu^2+\left|D\psi(x)\right|^2\right)^\frac{p}{2}dx\right]\notag\\
&+\frac{c|h|^2}{R^2}\left(\int_{B_R}\left(\mu^2+\left|Du(x)\right|^2\right)^\frac{p}{2}dx\right).
\end{align}

Let us observe that, since $V_p\left(D\psi\right)\in W^{1,2}_{\mathrm{loc}}\left(\Omega\right)$, by Sobolev's Inequality, $D\psi\in L^\frac{np}{n-2}_{\mathrm{loc}}\left(\Omega\right)$.
Therefore, applying Theorem \ref{ThmByunChoOk} with $q=\frac{np}{n-2}$, we have $Du\in L^\frac{np}{n-2}_{\mathrm{loc}}\left(\Omega\right),$ with the following estimate:

\begin{align}\label{BCOestimate}
&\Mint_{B_{R}}\left|Du(x)\right|^\frac{np}{n-2}dx\le C\left\lbrace 1+\Mint_{B_{2R}}\left|D\psi(x)\right|^\frac{np}{n-2}dx+\left(\Mint_{B_{2R}}\left|Du(x)\right|^pdx\right)^\frac{n}{n-2}\right\rbrace.
\end{align}

Using \eqref{BCOestimate}, estimate \eqref{conclusion1.2} becomes

\begin{align}\label{conclusion1.3}
&\int_{B_{\frac{R}{2}}}\left|\tau_{h}V_p(Du(x))\right|^2dx\notag\\
\le&c|h|^2\left\lbrace\int_{B_{2R}}\left(\mu^2+\left|D\psi(x)\right|^2\right)^{\frac{np}{2(n-2)}}dx+\left[\int_{B_{2R}}\left(\mu^2+\left|Du(x)\right|^2\right)^{\frac{p}{2}}dx\right]^{\frac{n}{n-2}}\right.\notag\\
&\left.\quad+\int_{B_R}g^n(x)dx+\left(\int_{B_{R}} g^n(x)dx\right)^\frac{1}{2}dx\right\rbrace\notag\\
&+\frac{c|h|^2}{R}\left[\int_{B_{R}}\left|DV_p\left(D\psi(x)\right)\right|^2dx+\int_{B_{R}}\left(\mu^2+\left|D\psi(x)\right|^2\right)^\frac{p}{2}dx\right]\notag\\
&+\frac{c|h|^2}{R^2}\left(\int_{B_R}\left(\mu^2+\left|Du(x)\right|^2\right)^\frac{p}{2}dx\right).
\end{align}

Applying Sobolev's embedding Theorem to the function $V_p\left(D\psi\right)$, and exploiting the fact that $p<\frac{np}{n-p}$, we get

\begin{align}\label{conclusion1.3*}
&\int_{B_{\frac{R}{2}}}\left|\tau_{h}V_p(Du(x))\right|^2dx\notag\\
\le&c|h|^2\left\lbrace\left[\int_{B_{2R}}\left(\left|V_p\left(D\psi(x)\right)\right|^2+\left|DV_p\left(D\psi(x)\right)\right|^2\right)dx\right]^\frac{n}{n-2}+\left[\int_{B_{2R}}\left(\mu^2+\left|Du(x)\right|^2\right)^{\frac{p}{2}}dx\right]^{\frac{n}{n-2}}\right.\notag\\
&\left.\quad+\int_{B_R}g^n(x)dx+\left(\int_{B_{R}} g^n(x)dx\right)^\frac{1}{2}dx\right\rbrace\notag\\
&+\frac{c|h|^2}{R}\int_{B_{R}}\left|DV_p\left(D\psi(x)\right)\right|^2dx+\frac{c|h|^2}{R^2}\left(\int_{B_R}\left(\mu^2+\left|Du(x)\right|^2\right)^\frac{p}{2}dx\right).
\end{align}

So, applying Lemma \ref{Giusti8.2}, for a suitable choice of $C$ and $\sigma$, we get

\begin{equation}\label{estimate1pf}
\left\Arrowvert DV_p(Du(x))\right\Arrowvert_{L^2\left(B_{\frac{R}{2}}\right)}\le C\left(1+\left\Arrowvert Du\right\Arrowvert_{L^p(B_{2R})}+\left\Arrowvert V_p\left(D\psi\right)\right\Arrowvert_{W^{1, 2}(B_{2R})}+\left\Arrowvert g\right\Arrowvert_{L^n\left(B_{R}\right)}\right)^\sigma,
\end{equation}

that is the conclusion.
\end{proof}

\section{Proof of Theorem \ref{thm2}}\label{Thm2Pf}

This section is devoted to the proof of Theorem \ref{thm2}. It is worth noticing that, in this case, our starting point can't be equation \eqref{diveq}, since our assumption on $\psi$ doesn't allow to calculate the divergence in the right-hand side.

\begin{proof}
Let us fix a ball $B_{4R}\Subset \Omega$ and arbitrary radii $\frac{R}{2}<r<s<t<\lambda r<R$, with $1<\lambda<2$. Let us consider a cut-off function $\eta\in C^\infty_0(B_t)$ such that $\eta\equiv 1$ on $B_s$ and  $|D \eta|\le \frac{c}{t-s}$. From now on, with no loss of generality, we suppose $R<1$.\\
Let $v\in W^{1, p}_0(\Omega)$ be such that

\begin{equation}\label{cond}
u-\psi+\tau v\ge 0\qquad\forall \tau\in[0, 1],
\end{equation}

\noindent  and observe that $\varphi:=u+\tau v\in
\mathcal{K}_\psi(\Omega)$ for all $\tau \in [0, 1]$, since
$\varphi=u+\tau v\ge \psi $. For $|h|<\frac{R}{4}$, we consider

\begin{equation}\label{v1}
v_1(x)=\eta^2(x)\left[(u-\psi)(x+h)-(u-\psi)(x)\right],
\end{equation}

\noindent  so we have  $v_1\in W^{1, p}_0(\Omega)$, and,
for any $\tau\in[0,1]$, $v_1$ satisfies \eqref{cond}. Indeed, for
a. e. $x \in \Omega$ and for any $\tau\in[0,1]$

\begin{align*}
	u(x)-\psi(x)+\tau v_1(x)=&
	u(x)-\psi(x)+\tau\eta^2(x)\left[(u-\psi)(x+h)-(u-\psi)(x)\right] \notag\\
	=&\tau\eta^2(x)(u-\psi)(x+h)+(1-\tau\eta^2(x))(u-\psi)(x)\ge 0,
\end{align*}

\noindent   since $u\in \mathcal{K}_\psi(\Omega)$ and $0\le\eta\le1$.\\
So we can use $\varphi=u+\tau v_1$ as a test function in inequality \eqref{variationalinequality},  thus getting

\begin{equation}\label{3.3}
0\le\int_{\Omega}\left<A(x, Du(x)), D\left[\eta^2(x)\left[(u-\psi)(x+h)-(u-\psi)(x)\right]\right]\right>dx.
\end{equation}

In a similar way, we define

\begin{equation}\label{v2}
v_2(x)=\eta^2(x-h)\left[(u-\psi)(x-h)-(u-\psi)(x)\right],
\end{equation}

and we have $v_2\in W^{1, p}_0(\Omega)$, and
\eqref{cond} still is satisfied for any $\tau\in[0,1]$, since

\begin{align*}
	u(x)-\psi(x)+\tau v_2(x)=&
	u(x)-\psi(x)+\tau\eta^2(x-h)\left[(u-\psi)(x-h)-(u-\psi)(x)\right]
	\notag\\=&\tau\eta^2(x)(u-\psi)(x-h)+(1-\tau\eta^2(x-h))(u-\psi)(x)\ge
	0.
\end{align*}

By using in \eqref{variationalinequality} as test function
$\varphi=u+\tau v_2$, we get

\begin{equation}
0\le\int_{\Omega}\left<A(x, Du(x)), D\left[\eta^2(x-h)\left[(u-\psi)(x-h)-(u-\psi)(x)\right]\right]\right>dx,
\end{equation}

and by means of a change of variable, we obtain

\begin{equation}\label{3.5}
0\le\int_{\Omega}\left<A(x+h, Du(x+h)), D\left[\eta^2(x)\left[(u-\psi)(x)-(u-\psi)(x+h)\right]\right]\right>dx.
\end{equation}

We can add \eqref{3.3} and \eqref{3.5}, thus getting

\begin{align*}
0\le&\int_{\Omega}\left<A(x, Du(x)), D\left[\eta^2(x)\left[(u-\psi)(x+h)-(u-\psi)(x)\right]\right]\right>dx\\
&+\int_{\Omega}\left<A(x+h, Du(x+h)), D\left[\eta^2(x)\left[(u-\psi)(x)-(u-\psi)(x+h)\right]\right]\right>dx,
\end{align*}

that is

\begin{equation*}%\label{differenceinequality1}
0\le\int_{\Omega}\left<A(x, Du(x))-A(x+h, Du(x+h)), D\left[\eta^2(x)\left[(u-\psi)(x+h)-(u-\psi)(x)\right]\right]\right>dx,
\end{equation*}

%\begin{align*}%\label{differenceinequality1}
%    0\le&\int_{\Omega}\left<A(x, Du(x))-A(x+h, Du(x+h)),\right.\notag\\
%    &\left. D\left[\eta^2(x)\left[(u-\psi)(x+h)-(u-\psi)(x)\right]\right]\right>dx,
%    \end{align*}

which implies

\begin{align*}%\label{differenceinequality2}
0\ge&\int_{\Omega}\left<A(x+h, Du(x+h))-A(x, Du(x)), \eta^2(x)D\left[(u-\psi)(x+h)-(u-\psi)(x)\right]\right>dx\notag\\
&+\int_{\Omega}\left<A(x+h, Du(x+h))-A(x, Du(x)),
2\eta(x)D\eta(x)\left[(u-\psi)(x+h)-(u-\psi)(x)\right]\right>dx.
\end{align*}

Previous inequality can be rewritten as follows

\begin{align}\label{differenceinequality3}
0\ge&\int_{\Omega}\left<A(x+h, Du(x+h))-A(x+h, Du(x)),\eta^2(x)(Du(x+h)-Du(x))\right>dx\notag\\
&-\int_{\Omega}\left<A(x+h, Du(x+h))-A(x+h, Du(x)), \eta^2(x)(D\psi(x+h)-D\psi(x))\right>dx\notag\\
&+\int_{\Omega}\left<A(x+h, Du(x+h))-A(x+h, Du(x)), 2\eta(x)D\eta(x)\tau_h\left(u-\psi\right)(x)\right>dx\notag\\
&+\int_{\Omega}\left<A(x+h, Du(x))-A(x, Du(x)),\eta^2(x)(Du(x+h)-Du(x))\right>dx\notag\\
&-\int_{\Omega}\left<A(x+h, Du(x))-A(x, Du(x)), \eta^2(x)(D\psi(x+h)-D\psi(x))\right>dx\notag\\
&+\int_{\Omega}\left<A(x+h, Du(x))-A(x, Du(x)), 2\eta(x)D\eta(x)\tau_h\left(u-\psi\right)(x)\right>dx\notag\\
=:& \,I+II+III+IV+V+VI,
\end{align}

\noindent so we have

\begin{equation}\label{differenceinequality}
I\le |II|+|III|+|IV|+|V|+|VI|.
\end{equation}

By \eqref{p-ellipticityA1} we have

\begin{equation}\label{I_b}
I\ge\nu\int_{\Omega}\eta^2(x)\left(\mu^2+\left|Du(x)\right|^2+\left|Du(x+h)\right|^2\right)^\frac{p-2}{2}\left|\tau_hDu(x)\right|^2dx.
\end{equation}

Before going further, let us observe that, since $V_p\left(D\psi\right)\in B^\alpha_{2,q,\mathrm{loc}}(\Omega)$ with $q\le 2^*_\alpha$ then, by Lemma \ref{EP2.2}, $V_p\left(D\psi\right)\in L^\frac{2n}{n-2\alpha}_\mathrm{loc}(\Omega)$, and so $D\psi\in L^\frac{np}{n-2\alpha}_\mathrm{loc}(\Omega)$ and, by Theorem \ref{ThmByunChoOk}, we also have $Du\in L^\frac{np}{n-2\alpha}_\mathrm{loc}(\Omega)$.\\
Let us consider the term $II$. By assumption \eqref{p-growtA2} we have

\begin{equation}\label{II_b'}
\left|II\right|\le L\int_{\Omega}\eta^2(x)\left(\mu^2+\left|Du(x)\right|^2+\left|Du(x+h)\right|^2\right)^\frac{p-2}{2}\left|\tau_hDu(x)\right|\left|\tau_hD\psi(x)\right|.
\end{equation}

Now we set 

\[
E_1:=\LBrace x\in\Omega: \lAbs Du(x)\rAbs^2+\lAbs Du(x+h)\rAbs^2>\lAbs D\psi(x)\rAbs^2+\lAbs D\psi(x+h)\rAbs^2\RBrace
\]

and

\[
E_2:=\Omega\setminus E_1, 
\]

so \eqref{II_b'} becomes

\begin{align}\label{II_b''}
\left|II\right|\le& L\int_{E_1}\eta^2(x)\left(\mu^2+\left|Du(x)\right|^2+\left|Du(x+h)\right|^2\right)^\frac{p-2}{2}\left|\tau_hDu(x)\right|\left|\tau_hD\psi(x)\right|\notag\\
&+L\int_{E_2}\eta^2(x)\left(\mu^2+\left|Du(x)\right|^2+\left|Du(x+h)\right|^2\right)^\frac{p-2}{2}\left|\tau_hDu(x)\right|\left|\tau_hD\psi(x)\right|\notag\\
=:&II_1+II_2.
\end{align}

Since $1<p<2$, using Young's Inequality with exponents $\left(2, 2\right)$, the properties of $\eta$ and Lemma \ref{lemma6GP}, we get

\begin{align}\label{II_1}
II_1\le& L\int_{E_1}\eta^2(x)\left(\mu^2+\left|Du(x)\right|^2+\left|Du(x+h)\right|^2\right)^\frac{p-2}{4}\left|\tau_hDu(x)\right|\notag\\
&\cdot \left(\mu^2+\left|D\psi(x)\right|^2+\left|D\psi(x+h)\right|^2\right)^\frac{p-2}{4}\left|\tau_hD\psi(x)\right|dx\notag\\
\le&\varepsilon\int_{E_1}\eta^2(x)\left(\mu^2+\left|Du(x)\right|^2+\left|Du(x+h)\right|^2\right)^\frac{p-2}{2}\left|\tau_hDu(x)\right|^2dx\notag\\
&+c_\varepsilon\int_{E_1}\eta^2(x)\left(\mu^2+\left|D\psi(x)\right|^2+\left|D\psi(x+h)\right|^2\right)^\frac{p-2}{2}\left|\tau_hD\psi(x)\right|^2dx\notag\\
\le&\varepsilon\int_{\Omega}\eta^2(x)\left(\mu^2+\left|Du(x)\right|^2+\left|Du(x+h)\right|^2\right)^\frac{p-2}{2}\left|\tau_hDu(x)\right|^2dx\notag\\
&+c_\varepsilon\int_{B_t}\lAbs\tau_hV_p\left(D\psi(x)\right)\rAbs^2dx.
\end{align}

For what concerns the term $II_2$, using Young's Inequality with exponents $\left(2, 2\right)$, the properties of $\eta$, and Lemmas \ref{lemma6GP} and \ref{le1}, we have

\begin{align}\label{II_2}
II_2\le& L\int_{E_2}\eta^2(x)\left(\mu^2+\left|Du(x)\right|^2+\left|Du(x+h)\right|^2\right)^\frac{p-1}{2}\left|\tau_hD\psi(x)\right|\notag\\
\le&L\int_{E_2}\eta^2(x)\left(\mu^2+\left|D\psi(x)\right|^2+\left|D\psi(x+h)\right|^2\right)^\frac{p-1}{2}\left|\tau_hD\psi(x)\right|\notag\\
\le&c\int_{B_t}\left(\mu^2+\left|D\psi(x)\right|^2+\left|D\psi(x+h)\right|^2\right)^\frac{p-2}{2}\left|\tau_hD\psi(x)\right|^2dx\notag\\
&+c\int_{B_t}\left(\mu^2+\left|D\psi(x)\right|^2+\left|D\psi(x+h)\right|^2\right)^\frac{p}{2}dx\notag\\
\le&c\int_{B_t}\lAbs\tau_hV_p\left(D\psi(x)\right)\rAbs^2dx+c\int_{B_{\lambda r}}\left(\mu^p+\left|D\psi(x)\right|^p\right)dx\notag\\
\le& c\int_{B_t}\lAbs\tau_hV_p\left(D\psi(x)\right)\rAbs^2dx+cR^{2\alpha}\left[\int_{B_{R}}\left(1+\left|D\psi(x)\right|^\frac{np}{n-2\alpha}\right)dx\right]^\frac{n-2\alpha}{n},
\end{align}

where we used the fact that $D\psi\in\L^\frac{np}{n-2\alpha}$, and since $p<\frac{np}{n-2\alpha}$ we have

\[
\int_{B_{R}}\left|D\psi(x)\right|^pdx\le\left(\omega_nR^n\right)^{1-\frac{n-2\alpha}{n}}\left(\int_{B_{R}}\left|D\psi(x)\right|^\frac{np}{n-2\alpha}dx\right)^\frac{n-2\alpha}{n},
\]

where $\omega_n$ is the measure of the ball of radius 1 in $\R^n$.\\
Plugging \eqref{II_1} and \eqref{II_2} into \eqref{II_b''}, we get
the following estimate for the term $II$:

\begin{align}\label{II_b}
\lAbs II\rAbs\le&\varepsilon\int_{\Omega}\eta^2(x)\left(\mu^2+\left|Du(x)\right|^2+\left|Du(x+h)\right|^2\right)^\frac{p-2}{2}\left|\tau_hDu(x)\right|^2dx\notag\\
&+ c_\varepsilon\int_{B_t}\lAbs\tau_hV_p\left(D\psi(x)\right)\rAbs^2dx+cR^{2\alpha}\left[\int_{B_{R}}\left(1+\left|D\psi(x)\right|^\frac{np}{n-2\alpha}\right)dx\right]^\frac{n-2\alpha}{n}.
\end{align}

Now we consider the term $III$. By assumption \eqref{p-growtA2}, Young's Inequality with exponents $\left(p, \frac{p}{p-1}\right)$ the fact that $1<p<2$ and the properties of $\eta$ we have

\begin{align}\label{III_b'}
\left|III\right|\le&L\int_{\Omega}\eta(x)\lAbs D\eta(x)\rAbs\left(\mu^2+\left|Du(x)\right|^2+\left|Du(x+h)\right|^2\right)^{\frac{p-2}{2}}\lAbs\tau_{h}Du(x)\rAbs\lAbs\tau_{h}(u-\psi)(x)\rAbs dx\notag\\
\le&\varepsilon\int_{\Omega}\eta^\frac{p}{p-1}(x)\left(\mu^2+\left|Du(x)\right|^2+\left|Du(x+h)\right|^2\right)^{\frac{p-2}{2}\cdot\frac{p}{p-1}}\left|\tau_hDu(x)\right|^{\frac{p}{p-1}-2}\cdot\left|\tau_hDu(x)\right|^{2}dx\notag\\
&+\frac{c_\varepsilon}{R^p}\int_{B_{R}}\left|\tau_{h}(u-\psi)(x)\right|^pdx\notag\\
\le&\varepsilon\int_{\Omega}\eta^\frac{p}{p-1}(x)\left(\mu^2+\left|Du(x)\right|^2+\left|Du(x+h)\right|^2\right)^{\frac{p-2}{2}}\left|\tau_hDu(x)\right|^{2}dx\notag\\
&+\frac{c_\varepsilon}{R^p}\int_{B_{R}}\left|\tau_{h}(u-\psi)(x)\right|^pdx,
\end{align}

and using Lemma \ref{le1} we get

\begin{align}\label{III_b}
\left|III\right|\le&\varepsilon\int_{\Omega}\eta^\frac{p}{p-1}(x)\left(\mu^2+\left|Du(x)\right|^2+\left|Du(x+h)\right|^2\right)^{\frac{p-2}{2}}\left|\tau_hDu(x)\right|^2dx\notag\\
&+\frac{c_\varepsilon|h|^p}{R^p}\int_{B_{\lambda R}}\left|D(u-\psi)(x)\right|^pdx\notag\\
\le&\varepsilon\int_{\Omega}\eta^\frac{p}{p-1}(x)\left(\mu^2+\left|Du(x)\right|^2+\left|Du(x+h)\right|^2\right)^{\frac{p-2}{2}}\left|\tau_hDu(x)\right|^2dx\notag\\
&+\frac{c_\varepsilon|h|^p}{R^{p-2\alpha}}\left(\int_{B_{2R}}\left|D(u-\psi)(x)\right|^\frac{np}{n-2\alpha}dx\right)^\frac{n-2\alpha}{n},
\end{align}
where, in the last line, we used the fact that $D(u-\psi)\in L^{\frac{np}{n-2\alpha}}$, arguing like in \eqref{II_2}.\\
Let us consider, now, the term $IV$. By \eqref{A5}, Young's Inequality with exponents $\left(2, 2\right)$ and recalling the properties of $\eta$ we have

\begin{align}\label{IV_b'}
|IV|\le&|h|^\alpha\int_{\Omega}\eta^2(x)\left(g_k(x)+g_k(x+h)\right)\left(\mu^2+\left|Du(x)\right|^2\right)^\frac{p-1}{2}\left|\tau_{h}Du(x)\right|dx\notag\\
\le&\varepsilon\int_{\Omega}\eta^2(x)\left(\mu^2+\left|Du(x)\right|^2+\left|Du(x+h)\right|^2\right)^{\frac{p-2}{2}}\left|\tau_hDu(x)\right|^2dx\notag\\
&+c_\varepsilon |h|^{2\alpha}\int_{B_R}\left(g_k(x)+g_k(x+h)\right)^2\left(\mu^2+\left|Du(x)\right|^2+\left|Du(x+h)\right|^2\right)^{\frac{p}{2}}dx,
\end{align}

where $2^{-k}\frac{R}{4}\le|h|\le2^{-k+1}\frac{R}{4}$ for $k\in\N.$
Using H\"{o}lder's Inequality with exponents $\left(\frac{n}{2\alpha}, \frac{n}{n-2\alpha}\right)$ and Lemma \ref{le1} we get

\begin{align}\label{IV''_b}
|IV|\le&\varepsilon\int_{\Omega}\eta^2(x)\left(\mu^2+\left|Du(x)\right|^2+\left|Du(x+h)\right|^2\right)^{\frac{p-2}{2}}\left|\tau_hDu(x)\right|^2dx\notag\\
&+c_\varepsilon |h|^{2\alpha}\left(\int_{B_R}\left(g_k(x)+g_k(x+h)\right)^{\frac{n}{\alpha}}dx\right)^\frac{2\alpha}{n}\notag\\
&\quad\cdot\left(\int_{B_R}\left(\mu^2+\left|Du(x)\right|^2+\left|Du(x+h)\right|^2\right)^{\frac{np}{2(n-2\alpha)}}dx\right)^\frac{n-2\alpha}{n}\notag\\
\le&\varepsilon\int_{\Omega}\eta^2(x)\left(\mu^2+\left|Du(x)\right|^2+\left|Du(x+h)\right|^2\right)^{\frac{p-2}{2}}\left|\tau_hDu(x)\right|^2dx\notag\\
&+c_\varepsilon |h|^{2\alpha}\left(\int_{B_t}\left(g_k(x)+g_k(x+h)\right)^{\frac{n}{\alpha}}dx\right)^\frac{2\alpha}{n}\notag\\
&\quad\cdot\left(\int_{B_R}\left(\mu^{\frac{np}{n-2\alpha}}+\left|Du(x)\right|^{\frac{np}{n-2\alpha}}\right)dx\right)^\frac{n-2\alpha}{n}.
\end{align}

Applying estimate \eqref{ByunChoOkStima} with $q=\frac{np}{n-2\alpha}$, we have 

\begin{equation}\label{BCOestimateIV_b}
\Mint_{B_{R}}\left|Du(x)\right|^\frac{np}{n-2\alpha}dx\le C\left\lbrace 1+\Mint_{B_{2R}}\left|D\psi(x)\right|^\frac{np}{n-2\alpha}dx+\left(\Mint_{B_{2R}}\left|Du(x)\right|^pdx\right)^\frac{n}{n-2\alpha}\right\rbrace.
\end{equation}

Plugging \eqref{BCOestimateIV_b} into \eqref{IV''_b} we get

\begin{align}\label{IV_b}
|IV|\le&\varepsilon\int_{\Omega}\eta^2(x)\left(\mu^2+\left|Du(x)\right|^2+\left|Du(x+h)\right|^2\right)^{\frac{p-2}{2}}\left|\tau_hDu(x)\right|^2dx\notag\\
&+c_\varepsilon |h|^{2\alpha}\left(\int_{B_R}\left(g_k(x)+g_k(x+h)\right)^{\frac{n}{\alpha}}dx\right)^\frac{2\alpha}{n}\notag\\
&\quad\cdot\left[ 1+\int_{B_{2R}}\left|D\psi(x)\right|^\frac{np}{n-2\alpha}dx+\left(\int_{B_{2R}}\left|Du(x)\right|^pdx\right)^\frac{n}{n-2\alpha}\right]^\frac{n-2\alpha}{n}.
\end{align}

In order to estimate the term $V$, we recall the properties of $\eta$, consider $2^{-k}\frac{R}{4}\le|h|\le2^{-k+1}\frac{R}{4}$ for $k\in\N$ and use \eqref{A5}, Young's Inequality with exponents $\left(2, 2\right)$, and Lemma \ref{lemma6GP}, thus getting 

\begin{align}\label{V_b'}
|V|\le&|h|^\alpha\int_{\Omega}\eta^2(x)\left(g_k(x)+g_k(x+h)\right)\left(\mu^2+\left|Du(x)\right|^2\right)^\frac{p-1}{2}\left|\tau_{h}D\psi(x)\right|dx\notag\\
\le&|h|^{\alpha}\int_{B_t}\left(g_k(x)+g_k(x+h)\right)\left(\mu^2+\left|Du(x)\right|^2\right)^\frac{p-1}{2}\left|\tau_{h}D\psi(x)\right|\notag\\
&\cdot\left(\mu^2+\left|D\psi(x)\right|^2+\left|D\psi(x+h)\right|^2\right)^\frac{p-2}{4}\cdot \left(\mu^2+\left|D\psi(x)\right|^2+\left|D\psi(x+h)\right|^2\right)^\frac{2-p}{4}dx\notag\\
\le& c|h|^{2\alpha}\int_{B_t}\left(g_k(x)+g_k(x+h)\right)^2\cdot\left(\mu^2+\left|Du(x)\right|^2\right)^{p-1}\notag\\
&\quad\cdot\left(\mu^2+\left|D\psi(x)\right|^2+\left|D\psi(x+h)\right|^2\right)^\frac{2-p}{2}dx\notag\\
&+c\int_{B_t}\left(\mu^2+\left|D\psi(x)\right|^2+\left|D\psi(x+h)\right|^2\right)^\frac{p-2}{2}\left|\tau_{h}D\psi(x)\right|^2dx\notag\\
\le& c|h|^{2\alpha}\int_{B_t}\left(g_k(x)+g_k(x+h)\right)^2\cdot\left(\mu^2+\left|Du(x)\right|^2\right)^{p-1}\notag\\
&\quad\cdot\left(\mu^2+\left|D\psi(x)\right|^2+\left|D\psi(x+h)\right|^2\right)^\frac{2-p}{2}dx+c\int_{B_R}\left|\tau_hV_p(D\psi(x))\right|^2dx.
\end{align}

By H\"{o}lder's Inequality with exponents $\left(\frac{n}{2\alpha}, \frac{n}{n-2\alpha}\right)$ and then with exponents $\left(\frac{p}{2(p-1)}, \frac{p}{2-p}\right)$, \eqref{V_b'} gives

\begin{align}\label{V_b''}
|V|\le& c|h|^{2\alpha}\left(\int_{B_t}\left(g_k(x)+g_k(x+h)\right)^\frac{n}{\alpha}dx\right)^\frac{2\alpha}{n}\notag\\
&\quad\cdot\left(\int_{B_{t}}\left(\mu^2+\left|Du(x)\right|^2\right)^{\frac{n(p-1)}{n-2\alpha}}\cdot\left(\mu^2+\left|D\psi(x)\right|^2+\left|D\psi(x+h)\right|^2\right)^{\frac{n(2-p)}{2(n-2\alpha)}}dx\right)^\frac{n-2\alpha}{n}\notag\\
&+c\int_{B_R}\left|\tau_hV_p(D\psi(x))\right|^2dx\notag\\
\le& c|h|^{2\alpha}\left(\int_{B_R}\left(g_k(x)+g_k(x+h)\right)^\frac{n}{\alpha}dx\right)^\frac{2\alpha}{n}\cdot\left(\int_{B_{R}}\left(\mu^2+\left|Du(x)\right|^2\right)^{\frac{np}{2(n-2\alpha)}}dx\right)^{\frac{2(p-1)}{p}\cdot\frac{n-2\alpha}{n}}\notag\\
&\quad\cdot\left(\int_{B_t}\left(\mu^2+\left|D\psi(x)\right|^2+\left|D\psi(x+h)\right|^2\right)^{\frac{np}{2(n-2\alpha)}}dx\right)^{\frac{2-p}{p}\cdot\frac{n-2\alpha}{n}}\notag\\
&+c\int_{B_R}\left|\tau_hV_p(D\psi(x))\right|^2dx.\notag\\
\le&c|h|^{2\alpha}\left(\int_{B_R}\left(g_k(x)+g_k(x+h)\right)^\frac{n}{\alpha}dx\right)^\frac{2\alpha}{n}\cdot\left(\int_{B_{R}}\left(\mu^{\frac{np}{n-2\alpha}}+\left|Du(x)\right|^{\frac{np}{n-2\alpha}}\right)dx\right)^{\frac{2(p-1)}{p}\cdot\frac{n-2\alpha}{n}}\notag\\
&\quad\cdot\left(\int_{B_R}\left(\mu^{\frac{np}{n-2\alpha}}+\left|D\psi(x)\right|^{\frac{np}{n-2\alpha}}\right)dx\right)^{\frac{2-p}{p}\cdot\frac{n-2\alpha}{n}}+c\int_{B_R}\left|\tau_hV_p(D\psi(x))\right|^2dx,
\end{align}

where we also used Lemma \ref{le1}. Using Young's Inequality with exponents $\left(\frac{p}{2(p-1)}, \frac{p}{2-p}\right)$, we get\\

\begin{align}\label{V_b'''}
|V|\le& c|h|^{2\alpha}\left(\int_{B_R}\left(g_k(x)+g_k(x+h)\right)^\frac{n}{\alpha}dx\right)^\frac{2\alpha}{n}\cdot\left[\left(\int_{B_{R}}\left(\mu^{\frac{np}{n-2\alpha}}+\left|Du(x)\right|^{\frac{np}{n-2\alpha}}\right)dx\right)^{\frac{n-2\alpha}{n}}\right.\notag\\
&\left.\quad+\left(\int_{B_R}\left(\mu^{\frac{np}{n-2\alpha}}+\left|D\psi(x)\right|^{\frac{np}{n-2\alpha}}\right)dx\right)^{\frac{n-2\alpha}{n}}\right]+c\int_{B_R}\left|\tau_hV_p(D\psi(x))\right|^2dx.
\end{align}

Using \eqref{BCOestimateIV_b} to estimate the second integral of the right-hand side of \eqref{V_b'''}, we get

\begin{align}\label{V_b}
|V|\le& c|h|^{2\alpha}\left(\int_{B_R}\left(g_k(x)+g_k(x+h)\right)^\frac{n}{\alpha}dx\right)^\frac{2\alpha}{n}\notag\\
&\quad\cdot\left[1+\int_{B_{2R}}\left|D\psi(x)\right|^\frac{np}{n-2\alpha}dx+\left(\int_{B_{2R}}\left|Du(x)\right|^pdx\right)^\frac{n}{n-2\alpha}\right]^{\frac{n-2\alpha}{n}}\notag\\
&+c\int_{B_R}\left|\tau_hV_p(D\psi(x))\right|^2dx.
\end{align}

Now we consider the term $VI$. Recalling \eqref{A5}, taking $2^{-k}\frac{R}{4}\le|h|\le2^{-k+1}\frac{R}{4}$ for $k\in\N$, the properties of $\eta$ and using H\"{o}lder's Inequality with exponents $\left(\frac{n}{2\alpha}, \frac{n}{n-2\alpha}\right)$, and $\left(p,\frac{p}{p-1}\right)$ we get

\begin{align}\label{VI_b'}
|VI|\le&|h|^\alpha\int_{\Omega}\eta(x)\left|D\eta(x)\right|\left(g_k(x)+g_k(x+h)\right)\left(\mu^2+\left|Du(x)\right|^2\right)^\frac{p-1}{2}\left|\tau_{h}(u-\psi)(x)\right|dx\notag\\
\le& \frac{c|h|^\alpha}{R}\left(\int_{B_R}\left(g_k(x)+g_k(x+h)\right)^\frac{n}{2\alpha}dx\right)^\frac{2\alpha}{n}\notag\\
&\cdot\left(\int_{B_R}\left(\mu^2+\left|Du(x)\right|^2\right)^\frac{n(p-1)}{2(n-2\alpha)}\left|\tau_{h}\left(u-\psi(x)\right)\right|^\frac{n}{n-2\alpha}dx\right)^\frac{n-2\alpha}{n}\notag\\
\le&\frac{c|h|^\alpha}{R}\left(\int_{B_R}\left(g_k(x)+g_k(x+h)\right)^\frac{n}{2\alpha}dx\right)^\frac{2\alpha}{n}\cdot\left[\left(\int_{B_R}\left(\mu^2+\left|Du(x)\right|^2\right)^\frac{np}{2(n-2\alpha)}dx\right)^{\frac{p-1}{p}\cdot\frac{n-2\alpha}{n}}\right.\notag\\
&\left.\quad\cdot\left(\int_{B_R}\left|\tau_{h}(u-\psi)(x)\right|^\frac{np}{n-2\alpha}dx\right)^\frac{n-2\alpha}{np}\right].
\end{align}

By virtue of Lemma \ref{le1} we have

\begin{align}\label{VI_b*}
|VI|\le& \frac{c|h|^{\alpha+1}}{R}\left(\int_{B_R}\left(g_k(x)+g_k(x+h)\right)^\frac{n}{2\alpha}dx\right)^\frac{2\alpha}{n}\cdot\left[\left(\int_{B_{R}}\left(\mu^2+\left|Du(x)\right|^2\right)^\frac{np}{2(n-2\alpha)}dx\right)^{\frac{p-1}{p}\cdot\frac{n-2\alpha}{n}}\right.\notag\\
&\left.\cdot\left(\int_{B_{2R}}\left|D(u-\psi)(x)\right|^\frac{np}{n-2\alpha}dx\right)^\frac{n-2\alpha}{np}\right]\notag\\
\le& \frac{c|h|^{\alpha+1}}{R^{1-\alpha}}\left(\int_{B_R}\left(g_k(x)+g_k(x+h)\right)^\frac{n}{\alpha}dx\right)^\frac{\alpha}{n}\cdot\left[\left(\int_{B_R}\left(\mu^\frac{np}{n-2\alpha}+\left|Du(x)\right|^\frac{np}{n-2\alpha}\right)dx\right)^{\frac{p-1}{p}\cdot\frac{n-2\alpha}{n}}\right.\notag\\
&\left.\cdot\left(\int_{B_{\lambda R}}\left|D(u-\psi)(x)\right|^\frac{np}{n-2\alpha}dx\right)^\frac{n-2\alpha}{np}\right],
\end{align}

where we used the fact that $\LBrace g_k\RBrace_k\subset L^\frac{n}{\alpha}(\Omega)\subset L^\frac{n}{2\alpha}(\Omega)$, with the following estimate

\[
\lnorm g_k\rnorm_{L^\frac{n}{2\alpha}\left(B_R\right)}\le cR^\alpha\lnorm g_k\rnorm_{L^\frac{n}{\alpha}\left(B_R\right)}.
\]

Now, by Young's Inequality with exponents $\left(p, \frac{p}{p-1}\right)$ and \eqref{BCOestimateIV_b}, \eqref{VI_b*} becomes

\begin{align}\label{VI_b}
|VI|\le& \frac{c|h|^{\alpha+1}}{R^{1-\alpha}}\left(\int_{B_R}\left(g_k(x)+g_k(x+h)\right)^\frac{n}{\alpha}dx\right)^\frac{\alpha}{n}\notag\\
&\cdot\left[1+\int_{B_{2\lambda R}}\left|D\psi(x)\right|^\frac{np}{n-2\alpha}+\left(\int_{B_{2\lambda R}}\left|Du(x)\right|^pdx\right)^\frac{n}{n-2\alpha}\right]^\frac{n-2\alpha}{n}.
\end{align}

Plugging \eqref{I_b}, \eqref{II_b}, \eqref{III_b}, \eqref{IV_b}, \eqref{V_b} and \eqref{VI_b} into \eqref{differenceinequality}, recalling the properties of $\eta$ and choosing $\varepsilon=\frac{\nu}{6}$, and using Lemma \ref{lemma6GP} to estimate the left-hand side, we get

\begin{align}\label{full_b'}
&\int_{B_{\frac{R}{2}}}\left|\tau_{h}V_p\left(Du(x)\right)\right|^2dx\le cR^{2\alpha}\left[\int_{B_{R}}\left(1+\left|D\psi(x)\right|^\frac{np}{n-2\alpha}\right)dx\right]^\frac{n-2\alpha}{n}\notag\\
&+\frac{c|h|^p}{R^{p-2\alpha}}\left(\int_{B_{R}}\left|D(u-\psi)(x)\right|^\frac{np}{n-2\alpha}dx\right)^\frac{n-2\alpha}{n}+c|h|^{2\alpha}\left(\int_{B_R}\left(g_k(x)+g_k(x+h)\right)^{\frac{n}{\alpha}}dx\right)^\frac{2\alpha}{n}\notag\\
&\quad\cdot\left[ 1+\int_{B_{2R}}\left|D\psi(x)\right|^\frac{np}{n-2\alpha}dx+\left(\int_{B_{2R}}\left|Du(x)\right|^pdx\right)^\frac{n}{n-2\alpha}\right]^\frac{n-2\alpha}{n}\notag\\
&+c\int_{B_R}\left|\tau_hV_p(D\psi(x))\right|^2dx+\frac{c|h|^{\alpha+1}}{R^{1-\alpha}}\left(\int_{B_R}\left(g_k(x)+g_k(x+h)\right)^\frac{n}{\alpha}dx\right)^\frac{\alpha}{n}\notag\\
&\quad\cdot\left[ 1+\int_{B_{2\lambda R}}\left|D\psi(x)\right|^\frac{np}{n-2\alpha}dx+\left(\int_{B_{2\lambda R}}\left|Du(x)\right|^pdx\right)^\frac{n}{n-2\alpha}\right]^\frac{n-2\alpha}{n}.
\end{align}

Now, for $\beta\in(0, 1)$, for suffinciently small values of $|h|$ that will be made clear later, we can choose $R=\lAbs h\rAbs^\beta$, so \eqref{full_b'} becomes

\begin{align}\label{full_b_}
&\int_{B_{\frac{R}{2}}}\left|\tau_{h}V_p\left(Du(x)\right)\right|^2dx\le c|h|^{2\alpha\beta}\left[\int_{B_{R}}\left(1+\left|D\psi(x)\right|^\frac{np}{n-2\alpha}\right)dx\right]^\frac{n-2\alpha}{n}\notag\\
&+c|h|^{p(1-\beta)+2\alpha\beta}\left(\int_{B_{R}}\left|D(u-\psi)(x)\right|^\frac{np}{n-2\alpha}dx\right)^\frac{n-2\alpha}{n}+c|h|^{2\alpha}\left(\int_{B_R}\left(g_k(x)+g_k(x+h)\right)^{\frac{n}{\alpha}}dx\right)^\frac{2\alpha}{n}\notag\\
&\quad\cdot\left[ 1+\int_{B_{2R}}\left|D\psi(x)\right|^\frac{np}{n-2\alpha}dx+\left(\int_{B_{2R}}\left|Du(x)\right|^pdx\right)^\frac{n}{n-2\alpha}\right]^\frac{n-2\alpha}{n}\notag\\
&+c\int_{B_R}\left|\tau_hV_p(D\psi(x))\right|^2dx+c|h|^{\alpha-\beta+\alpha\beta+1}\left(\int_{B_R}\left(g_k(x)+g_k(x+h)\right)^\frac{n}{\alpha}dx\right)^\frac{\alpha}{n}\notag\\
&\quad\cdot \left[ 1+\int_{B_{2\lambda R}}\left|D\psi(x)\right|^\frac{np}{n-2\alpha}dx+\left(\int_{B_{2\lambda R}}\left|Du(x)\right|^pdx\right)^\frac{n}{n-2\alpha}\right]^\frac{n-2\alpha}{n}.
\end{align}

Now, since $\alpha, \beta\in(0, 1)$, if we set

\begin{align*}\label{piDefinition}
	&p_1=2\alpha\beta\in(0, 2),\qquad p_2=p(1-\beta)+2\alpha\beta\in(0, 4),\notag\\&p_3=2\alpha\in(0, 2),\qquad p_4=\alpha-\beta+\alpha\beta+1=(\alpha+1)(1-\beta)+2\alpha\beta\in(0, 3)
\end{align*}

we have

\begin{equation*}
\min_{i\in\set{1, 2, 3, 4}} p_i=p_1=2\alpha\beta.
\end{equation*}

Now let us divide both sides of \eqref{full_b_} by $|h|^{2\alpha\beta}$.\\

\begin{align}\label{full_b''}
\int_{B_{\frac{R}{2}}}&\frac{\left|\tau_{h}V_p\left(Du(x)\right)\right|^2}{|h|^{2{\alpha\beta}}}dx\le c\left[\int_{B_{R}}\left(1+\left|D\psi(x)\right|^\frac{np}{n-2\alpha}\right)dx\right]^\frac{n-2\alpha}{n}\notag\\
&+c|h|^{p(1-\beta)}\left(\int_{B_{R}}\left|D(u-\psi)(x)\right|^\frac{np}{n-2\alpha}dx\right)^\frac{n-2\alpha}{n}\notag\\
&+c|h|^{2\alpha(1-\beta)}\left(\int_{B_R}\left(g_k(x)+g_k(x+h)\right)^{\frac{n}{\alpha}}dx\right)^\frac{2\alpha}{n}\notag\\
&\quad\cdot\left[ 1+\int_{B_{2R}}\left|D\psi(x)\right|^\frac{np}{n-2\alpha}dx+\left(\int_{B_{2R}}\left|Du(x)\right|^pdx\right)^\frac{n}{n-2\alpha}\right]^\frac{n-2\alpha}{n}\notag\\
&+c\int_{B_R}\frac{\left|\tau_hV_p(D\psi(x))\right|^2}{|h|^{2\alpha}}dx\notag\\
&+c|h|^{(\alpha+1)(1-\beta)}\left(\int_{B_R}\left(g_k(x)+g_k(x+h)\right)^\frac{n}{\alpha}dx\right)^\frac{\alpha}{n}\notag\\
&\quad\cdot \left[ 1+\int_{B_{2\lambda R}}\left|D\psi(x)\right|^\frac{np}{n-2\alpha}dx+\left(\int_{B_{2\lambda R}}\left|Du(x)\right|^pdx\right)^\frac{n}{n-2\alpha}\right]^\frac{n-2\alpha}{n},
\end{align}

where we also used the fact that, if $|h|<1$ and $\beta, \alpha\in(0,1)$, then $|h|^{-2\alpha\beta}\le|h|^{-2\alpha}$.\\
In order to conclude, we have to take the $L^q$ norm with the measure $\frac{dh}{|h|^n}$ restricted to the ball $B\left(0, \frac{R}{4}\right)$ on the $h-$space, of the $L^2$ norm of the difference quotient of order $\alpha\beta$ of the function $V_p(Du)$. Since we have to integrate with respect to the measure $\frac{dh}{|h|^n}$ on the ball $B\left(0, \frac{R}{4}\right)$ and, for each $k\in \N$, the integral in the second-last line of \eqref{full_b''} is taken for $2^{-k}\frac{R}{4}\le|h|\le2^{-k+1}\frac{R}{4}$, it is useful to notice what follows

\begin{equation*}
B\left(0, \frac{R}{4}\right)=\bigcup_{k=1}^{\infty}\left(B\left(0, 2^{-k+1}\frac{R}{4}\right)\setminus B\left(0, 2^{-k}\frac{R}{4}\right)\right)=:\bigcup_{k=1}^\infty E_k,
\end{equation*}

and it is also worth noticing that the choice of the radius $R=\lAbs h\rAbs^\beta$ is possible for small values of $|h|$, since, for $k\in\N$, $2^{-k}\frac{R}{4}\le|h|\le2^{-k+1}\frac{R}{4}$ if and only if $2^{-\frac{k+2}{1-\beta}}\le|h|\le2^{-\frac{k+1}{1-\beta}}$.\\ 
We obtain the following estimate

\begin{align}\label{full_b*}
&\int_{B_{\frac{R}{4}(0)}}\left(\int_{B_{\frac{R}{2}}}\frac{\left|\tau_{h}V_p\left(Du(x)\right)\right|^2}{|h|^{2\alpha\beta}}dx\right)^\frac{q}{2}\frac{dh}{|h|^n}\le c\int_{B_{\frac{R}{4}(0)}}\left[\int_{B_{R}}\left(1+\left|D\psi(x)\right|^\frac{np}{n-2\alpha}\right)dx\right]^\frac{q(n-2\alpha)}{2n}\frac{dh}{|h|^n}\notag\\
&+c\int_{B_{\frac{R}{4}(0)}}|h|^\frac{qp(1-\beta)}{2}\frac{dh}{|h|^n}\cdot\left(\int_{B_{R}}\left|D(u-\psi)(x)\right|^\frac{np}{n-2\alpha}dx\right)^\frac{q(n-2\alpha)}{2n}\notag\\
&+c\sum_{k=1}^\infty\int_{E_k}\left|h\right|^{q\alpha(1-\beta)}\left(\int_{B_R}\left(g_k(x)+g_k(x+h)\right)^{\frac{n}{\alpha}}dx\right)^\frac{q\alpha}{n}\frac{dh}{|h|^n}\notag\\
&\quad\cdot\left[ 1+\int_{B_{2R}}\left|D\psi(x)\right|^\frac{np}{n-2\alpha}dx+\left(\int_{B_{2R}}\left|Du(x)\right|^pdx\right)^\frac{n}{n-2\alpha}\right]^\frac{q(n-2\alpha)}{2n}\notag\\
&+c\int_{B_{\frac{R}{4}(0)}}\left(\int_{B_R}\frac{\left|\tau_hV_p(D\psi(x))\right|^2}{|h|^{2\alpha}}dx\right)^\frac{q}{2}\frac{dh}{|h|^n}\notag\\
&+c\sum_{k=1}^\infty\int_{E_k}\left|h\right|^\frac{q(\alpha+1)(1-\beta)}{2}\left(\int_{B_R}\left(g_k(x)+g_k(x+h)\right)^\frac{n}{\alpha}dx\right)^\frac{q\alpha}{2n}\frac{dh}{|h|^n}\notag\\
&\quad\cdot \left[ 1+\int_{B_{2\lambda R}}\left|D\psi(x)\right|^\frac{np}{n-2\alpha}dx+\left(\int_{B_{2\lambda R}}\left|Du(x)\right|^pdx\right)^\frac{n}{n-2\alpha}\right]^\frac{q(n-2\alpha)}{2n}.
\end{align}

Now, in order to simplify the notations, we set

\begin{equation}\label{N}
\tilde{N}=\int_{B_{2\lambda R}}\left(1+\left|Du(x)\right|^p+\left|Du(x)\right|^{\frac{np}{n-2\alpha}}+\left|D\psi(x)\right|^p+\left|D\psi(x)\right|^{\frac{np}{n-2\alpha}}\right)dx,
\end{equation}

and write \eqref{full_b*} as follows

\begin{align}\label{full_b**}
&\int_{B_{\frac{R}{4}(0)}}\left(\int_{B_{\frac{R}{2}}}\frac{\left|\tau_{h}V_p\left(Du(x)\right)\right|^2}{|h|^{2\alpha\beta}}dx\right)^\frac{q}{2}\frac{dh}{|h|^n}\le C\int_{B_{\frac{R}{4}(0)}}|h|^\frac{qp(1-\beta)}{2}\frac{dh}{|h|^n}\notag\\
&+C\sum_{k=1}^\infty\int_{E_k}\left|h\right|^{q\alpha(1-\beta)}\left(\int_{B_R}\left(g_k(x)+g_k(x+h)\right)^{\frac{n}{\alpha}}dx\right)^\frac{q\alpha}{n}\frac{dh}{|h|^n}\notag\\
&+C\int_{B_{\frac{R}{4}(0)}}\left(\int_{B_R}\frac{\left|\tau_hV_p(D\psi(x))\right|^2}{|h|^{2\alpha}}dx\right)^\frac{q}{2}\frac{dh}{|h|^n}\notag\\
&+C\sum_{k=1}^\infty\int_{E_k}\left|h\right|^\frac{q(\alpha+1)(1-\beta)}{2}\left(\int_{B_R}\left(g_k(x)+g_k(x+h)\right)^\frac{n}{\alpha}dx\right)^\frac{q\alpha}{2n}\frac{dh}{|h|^n},
\end{align}

where the constant $C$ now depends on $\nu, \ell, L, n, p, q, \alpha, R, \tilde{N}.$\\
Applying Young's Inequality with exponents $\left(2, 2\right)$ to the second and the fourth integral of the right-hand side of \eqref{full_b**}, we get

\begin{align}\label{full_b****}
&\int_{B_{\frac{R}{4}(0)}}\left(\int_{B_{\frac{R}{2}}}\frac{\left|\tau_{h}V_p\left(Du(x)\right)\right|^2}{|h|^{2\alpha\beta}}dx\right)^\frac{q}{2}\frac{dh}{|h|^n}\le C\int_{B_{\frac{R}{4}(0)}}|h|^\frac{qp(1-\beta)}{2}\frac{dh}{|h|^n}\notag\\
&+C\int_{B_{\frac{R}{4}(0)}}\left|h\right|^{2q\alpha(1-\beta)}\frac{dh}{|h|^n}+C\sum_{k=1}^\infty\int_{E_k}\left(\int_{B_R}\left(g_k(x)+g_k(x+h)\right)^{\frac{n}{\alpha}}dx\right)^\frac{2q\alpha}{n}\frac{dh}{|h|^n}\notag\\
&+C\int_{B_{\frac{R}{4}(0)}}\left|h\right|^{q(\alpha+1)(1-\beta)}\frac{dh}{|h|^n}+C\sum_{k=1}^\infty\int_{E_k}\left(\int_{B_R}\left(g_k(x)+g_k(x+h)\right)^\frac{n}{\alpha}dx\right)^\frac{q\alpha}{n}\frac{dh}{|h|^n}\notag\\
&+C\int_{B_{\frac{R}{4}(0)}}\left(\int_{B_R}\frac{\left|\tau_hV_p(D\psi(x))\right|^2}{|h|^{2\alpha}}dx\right)^\frac{q}{2}\frac{dh}{|h|^n}.
\end{align}

Now let us observe that, since $\alpha, \beta\in (0, 1)$ and $1<p<2$, if we set $p_1=\frac{p(1-\beta)}{2}$, $p_2=2\alpha(1-\beta)$ and $p_3=(\alpha+1)(1-\beta)$ and, for each $i=1, 2, 3$,  $q_i=q\cdot p_i$, we have

$$\kappa:=\min_{i\in\set{1, 2, 3}}q_i>0$$

and since $|h|<1$ we can write \eqref{full_b****} as follows

\begin{align}\label{full_b*****}
&\int_{B_{\frac{R}{4}(0)}}\left(\int_{B_{\frac{R}{2}}}\frac{\left|\tau_{h}V_p\left(Du(x)\right)\right|^2}{|h|^{2\beta}}dx\right)^\frac{q}{2}\frac{dh}{|h|^n}\le
\notag\\&C\sum_{k=1}^\infty\int_{E_k}\left(\int_{B_R}\left(g_k(x)+g_k(x+h)\right)^{\frac{n}{\alpha}}dx\right)^\frac{2q\alpha}{n}\frac{dh}{|h|^n}\notag\\
&+C\sum_{k=1}^\infty\int_{E_k}\left(\int_{B_R}\left(g_k(x)+g_k(x+h)\right)^\frac{n}{\alpha}dx\right)^\frac{q\alpha}{n}\frac{dh}{|h|^n}\notag\\
&+C\int_{B_{\frac{R}{4}(0)}}\left(\int_{B_R}\frac{\left|\tau_hV_p(D\psi(x))\right|^2}{|h|^{2\alpha}}dx\right)^\frac{q}{2}\frac{dh}{|h|^n}\notag\\
&+C\int_{B_{\frac{R}{4}(0)}}\left|h\right|^\kappa\frac{dh}{|h|^n}=I_1+I_2+I_3+I_4
\end{align}

Now we notice that 

\begin{equation}\label{I_3}
I_3\le\left\Arrowvert V_p\left(D\psi\right)\right\Arrowvert_{B^\alpha_{2,q}(B_R)},
\end{equation}

which is finite by hypothesis.\\
For what concerns the term $I_4$, by calculating it in polar coordinates, we get

\begin{equation}\label{I_4}
I_4=C\int_0^\frac{R}{4}\rho^{\kappa-1}d\rho=C(n, p, q, \alpha, R),
\end{equation}

since $\kappa>0.$\\
Now let us write the integral $I_1$ in polar coordinates, so $h\in E_k$ if and only if $h=\rho\xi$ for $2^{-k}\frac{R}{4}\le \rho<2^{-k+1}\frac{R}{4}$ and some $\xi$ in the unit sphere $S^{n-1}$ on $\R^n.$ Denoting by $d\sigma(\xi)$ the surface measure on $S^{n-1}$, we have

\begin{align}\label{I_1'}
I_1=&C\sum_{k=1}^\infty\int_{r_{k}}^{r_{k-1}}\int_{S^{n-1}}\left(\int_{B_R}\left(g_k(x+\rho\xi)-g_k(x)\right)^\frac{n}{\alpha}\right)^\frac{2\alpha q}{n}d\sigma(\xi)\frac{d\rho}{\rho}\notag\\
\le& C\sum_{k=1}^\infty\int_{r_{k}}^{r_{k-1}}\int_{S^{n-1}}\left\Arrowvert \tau_{\rho\xi}g_k+g_k\right\Arrowvert_{L^\frac{n}{\alpha}(B_R)}^{2q} d\sigma(\xi)\frac{d\rho}{\rho},
\end{align}

where we set $r_k=2^{-k}\frac{R}{4}$. Let us note that, for each $\xi\in S^{n-1}$ and $r_k\le\rho\le r_{k-1},$

\begin{equation}\label{I_1''}
\left\Arrowvert \tau_{\rho\xi}g_k+g_k\right\Arrowvert_{L^\frac{n}{\alpha}(B_R)}\le\left\Arrowvert g_k\right\Arrowvert_{L^\frac{n}{\alpha}(B_{R-r_k\xi})}+\left\Arrowvert g_k\right\Arrowvert_{L^\frac{n}{\alpha}(B_R)}\le 2\left\Arrowvert g_k\right\Arrowvert_{L^\frac{n}{\alpha}\left(B_{R+\frac{R}{4}}\right)}.
\end{equation}

So, recalling the continuous embedding $\ell^q\left(L^\frac{n}{\alpha}\left(B_{2R}\right)\right)\subset\ell^{2q}\left(L^\frac{n}{\alpha}\left(B_{2R}\right)\right)$, by \eqref{I_1'} and \eqref{I_1''}, we get

\begin{equation}\label{I_1}
I_1\le C\left\Arrowvert\left\lbrace g_k\right\rbrace_k\right\Arrowvert_{\ell^q\left(L^\frac{n}{\alpha}\left(B_{2R}\right)\right)}^{2q}.
\end{equation}

We can argue in a similiar way to estimate the term $I_2$, thus getting

\begin{equation}\label{I_2}
I_2\le C\left\Arrowvert\left\lbrace g_k\right\rbrace_k\right\Arrowvert_{\ell^q\left(L^\frac{n}{\alpha}\left(B_{2R}\right)\right)}^q.
\end{equation}

Inserting \eqref{I_1}, \eqref{I_2}, \eqref{I_3}, and \eqref{I_4} in \eqref{full_b*****}, we have

\begin{align}\label{conclusion2}
\left\Arrowvert\frac{\tau_h V_p\left(Du\right)}{|h|^{\alpha\beta}}\right\Arrowvert_{L^q\left(\frac{dh}{|h|^n}; L^2\left(B_{\frac{R}{2}}\right)\right)}\le& C\left(1+\left\Arrowvert V_p\left(D\psi\right)\right\Arrowvert_{B^\alpha_{2,q}(B_R)}+\left\Arrowvert\left\lbrace g_k\right\rbrace_k\right\Arrowvert_{\ell^q\left(L^\frac{n}{\alpha}\left(B_{2R}\right)\right)}^{2q}\right).
\end{align}

Recalling explicitly the dependence of the constant $C$ on the value of $\tilde{N}$ given by \eqref{N}, for a suitable exponent $\sigma=\sigma(n, p, q, \alpha)>0$, using the fact that $\frac{np}{n-2\alpha}>p$ recalling \eqref{ByunChoOkStima} and  using Lemma \ref{lemma2.4EP} and Lemma \ref{EP2.2}, we can conclude with the estimate

\begin{align}\label{estimate2pf}
\left\Arrowvert\frac{\tau_h V_p\left(Du\right)}{|h|^{\alpha\beta}}\right\Arrowvert_{L^q\left(\frac{dh}{|h|^n}; L^2\left(B_{\frac{R}{2}}\right)\right)}\le C&\Bigg(1+\left\Arrowvert Du\right\Arrowvert_{L^p(B_{4R})}+\left\Arrowvert V_p\left(D\psi\right)\right\Arrowvert_{B^\alpha_{2,q}(B_{4R})}\notag\\
&+\left\Arrowvert\left\lbrace g_k\right\rbrace_k\right\Arrowvert_{\ell^q\left(L^\frac{n}{\alpha}\left(B_{2R}\right)\right)}\Bigg)^\sigma,
\end{align}

that is the conclusion.
\end{proof}

\section{Proof of Theorem \ref{thm3}}\label{Thm3Pf}

This section is devoted to the proof of Theorem \ref{thm3}, which is obtained using the same arguments of the previous section, taking into account that, here, the assumption \ref{A5} is replaced by the assumption \ref{A6}.

\begin{proof}
	Since, by the hypothesis, $V_p(D\psi)\in B^\gamma_{2,\infty,\mathrm{loc}}(\Omega)$ with $\alpha<\gamma<1$ then, recalling the definition \eqref{B-LInfsemiorm} and  using Lemma \ref{Lemma2.9EP}, we have $V_p(D\psi)\in L^{\frac{2n}{n-2\alpha}}_{\mathrm{loc}}(\Omega)$, and so $D\psi\in L^{\frac{np}{n-2\alpha}}_{\mathrm{loc}}(\Omega)$.
	This proof goes exactly like the one of Theorem \ref{thm2} until we arrive at the estimate \eqref{differenceinequality}, and the terms $I$, $II$ and $III$ can be treated in the same way, using \eqref{I_b}, \eqref{II_b} and \eqref{III_b} respectively. We just need to use the assumption \eqref{A6} instead of \eqref{A5}, in order to estimate the terms $IV$, $V$ and $VI$.\\
	For what concerns the term $IV$, using the assumption \eqref{A6}, Young's Inequality with exponents $\left(2, 2\right)$, H\"{o}lder's Inequality with exponents $\left(\frac{n}{2\alpha}, \frac{n}{n-2\alpha}\right)$, and Lemma \ref{le1} we get, for $|h|<\frac{R}{4}$,
	
	\begin{align}\label{IV'_c}
	|IV|&\le\varepsilon\int_{\Omega}\eta^2(x)\left(\mu^2+\left|Du(x)\right|^2+\left|Du(x+h)\right|^2\right)^{\frac{p-2}{2}}\left|\tau_hDu(x)\right|^2dx\notag\\
	&\quad+c_\varepsilon |h|^{2\alpha}\left(\int_{B_t}\left(g(x)+g(x+h)\right)^{\frac{n}{\alpha}}dx\right)^\frac{2\alpha}{n}\notag\\
	&\quad\quad\cdot\left(\int_{B_R}\left(\mu^{\frac{np}{n-2\alpha}}+\left|Du(x)\right|^{\frac{np}{n-2\alpha}}\right)dx\right)^\frac{n-2\alpha}{n}.
	\end{align}
	
	and using \eqref{BCOestimateIV_b}, we obtain

\begin{align}\label{IV_c}
|IV|&\le\varepsilon\int_{\Omega}\eta^2(x)\left(\mu^2+\left|Du(x)\right|^2+\left|Du(x+h)\right|^2\right)^{\frac{p-2}{2}}\left|\tau_hDu(x)\right|^2dx\notag\\
&\quad+c_\varepsilon |h|^{2\alpha}\left(\int_{B_t}\left(g(x)+g(x+h)\right)^{\frac{n}{\alpha}}dx\right)^\frac{2\alpha}{n}\notag\\
&\quad\quad\cdot\left[ 1+\int_{B_{2R}}\left|D\psi(x)\right|^\frac{np}{n-2\alpha}dx+\left(\int_{B_{2R}}\left|Du(x)\right|^pdx\right)^\frac{n}{n-2\alpha}\right]^\frac{n-2\alpha}{n}.
\end{align}

Let us consider, now, the term $V$ to which we apply the assumption \eqref{A6} in place of \eqref{A5}, and by the same arguments that we used in the previous section in order to obtain \eqref{V_b}, we have, for all $h\in B_{\frac{R}{4}}(0)$,

\begin{align}\label{V_c}
|V|&\le c|h|^{2\alpha}\left(\int_{B_R}\left(g(x)+g(x+h)\right)^\frac{n}{\alpha}dx\right)^\frac{2\alpha}{n}\notag\\
&\quad\quad\cdot\left[1+\int_{B_{2R}}\left|D\psi(x)\right|^\frac{np}{n-2\alpha}dx+\left(\int_{B_{2R}}\left|Du(x)\right|^pdx\right)^\frac{n}{n-2\alpha}\right]^{\frac{n-2\alpha}{n}}\notag\\
&\quad+c\int_{B_R}\left|\tau_hV_p(D\psi(x))\right|^2dx.
\end{align}

For what concerns the term $VI$, again, using the assumption \eqref{A6}, and the same arguments we used in the previous section in order to get \eqref{VI_b}, for $|h|<\frac{R}{4}$, we obtain

\begin{align}\label{VI_c}
|VI|\le& \frac{c|h|^{\alpha+1}}{R^{1-\alpha}}\left(\int_{B_R}\left(g(x)+g(x+h)\right)^\frac{n}{\alpha}dx\right)^\frac{\alpha}{n}\notag\\
&\cdot\left[1+\int_{B_{2\lambda R}}\left|D\psi(x)\right|^\frac{np}{n-2\alpha}+\left(\int_{B_{2\lambda R}}\left|Du(x)\right|^pdx\right)^\frac{n}{n-2\alpha}\right]^\frac{n-2\alpha}{n}.
\end{align}

Now we plug \eqref{I_b}, \eqref{II_b}, \eqref{III_b}, \eqref{IV_c}, \eqref{V_c} and \eqref{VI_c} into \eqref{differenceinequality}, choose $\varepsilon=\frac{\nu}{6}$, recall the properties of $\eta$ and use Lemma \ref{lemma6GP} and Lemma \ref{le1}, thus getting

\begin{align}\label{full_c}
&\int_{B_{\frac{R}{2}}}\left|\tau_{h}V_p\left(Du(x)\right)\right|^2dx\le cR^{2\alpha}\left[\int_{B_{R}}\left(1+\left|D\psi(x)\right|^\frac{np}{n-2\alpha}\right)dx\right]^\frac{n-2\alpha}{n}\notag\\
&\quad+\frac{c|h|^p}{R^{p-2\alpha}}\left(\int_{B_{R}}\left|D(u-\psi)(x)\right|^\frac{np}{n-2\alpha}dx\right)^\frac{n-2\alpha}{n}+c|h|^{2\alpha}\left(\int_{B_R}\left(g(x)+g(x+h)\right)^{\frac{n}{\alpha}}dx\right)^\frac{2\alpha}{n}\notag\\
&\quad\quad\cdot\left[ 1+\int_{B_{2R}}\left|D\psi(x)\right|^\frac{np}{n-2\alpha}dx+\left(\int_{B_{2R}}\left|Du(x)\right|^pdx\right)^\frac{n}{n-2\alpha}\right]^\frac{n-2\alpha}{n}\notag\\
&\quad+c\int_{B_R}\left|\tau_hV_p(D\psi(x))\right|^2dx+\frac{c|h|^{\alpha+1}}{R^{1-\alpha}}\left(\int_{B_R}\left(g(x)+g(x+h)\right)^\frac{n}{\alpha}dx\right)^\frac{\alpha}{n}\notag\\
&\quad\quad\cdot \left[ 1+\int_{B_{2\lambda R}}\left|D\psi(x)\right|^\frac{np}{n-2\alpha}dx+\left(\int_{B_{2\lambda R}}\left|Du(x)\right|^pdx\right)^\frac{n}{n-2\alpha}\right]^\frac{n-2\alpha}{n}.
\end{align}

Now let us notice that, since, for any $\beta\in(0, 1)$, $|h|\le\frac{|h|^\beta}{4}$ if and only if $|h|\le2^{- \frac{2}{1-\beta}}$, we can choose $R=|h|^{\beta}$ and divide both sides of \eqref{full_c} by $|h|^{2\alpha\beta}$, so we get \\

\begin{align}\label{full_c'}
&\int_{B_{\frac{R}{2}}}\frac{\left|\tau_{h}V_p\left(Du(x)\right)\right|^2}{|h|^{2\alpha\beta}}dx\le c\left[\int_{B_{R}}\left(1+\left|D\psi(x)\right|^\frac{np}{n-2\alpha}\right)dx\right]^\frac{n-2\alpha}{n}\notag\\
&\quad+c|h|^{p(1-\beta)}\left(\int_{B_{R}}\left|D(u-\psi)(x)\right|^\frac{np}{n-2\alpha}dx\right)^\frac{n-2\alpha}{n}+c|h|^{2\alpha(1-\beta)}\left(\int_{B_{2R}}g^{\frac{n}{\alpha}}(x)dx\right)^\frac{2\alpha}{n}\notag\\
&\quad\quad\cdot\left[ 1+\int_{B_{2R}}\left|D\psi(x)\right|^\frac{np}{n-2\alpha}dx+\left(\int_{B_{2R}}\left|Du(x)\right|^pdx\right)^\frac{n}{n-2\alpha}\right]^\frac{n-2\alpha}{n}\notag\\
&\quad+c\int_{B_R}\frac{\left|\tau_hV_p(D\psi(x))\right|^2}{|h|^{2\alpha}}dx+c|h|^{(1-\beta)(\alpha+1)}\left(\int_{B_R}g^\frac{n}{\alpha}(x)dx\right)^\frac{\alpha}{n}\notag\\
&\quad\quad\cdot \left[ 1+\int_{B_{2\lambda R}}\left|D\psi(x)\right|^\frac{np}{n-2\alpha}dx+\left(\int_{B_{2\lambda R}}\left|Du(x)\right|^pdx\right)^\frac{n}{n-2\alpha}\right]^\frac{n-2\alpha}{n},
\end{align}

where we also used Lemma \ref{le1} and the fact that, for $|h|<\frac{R}{4}<R<1$, since $\alpha, \beta\in(0,1)$, $|h|^{-2\alpha\beta}<|h|^{-2\alpha}$.\\
Using Young's Inequality with exponents $\left(\frac{n}{2\alpha}, \frac{n}{n-2\alpha}\right)$, \eqref{full_c'} becomes

\begin{align}\label{full_c'''}
&\int_{B_{\frac{R}{2}}}\frac{\left|\tau_{h}V_p\left(Du(x)\right)\right|^2}{|h|^{2\alpha\beta}}dx\le c\left[\int_{B_{R}}\left(1+\left|D\psi(x)\right|^\frac{np}{n-2\alpha}\right)dx\right]^\frac{n-2\alpha}{n}\notag\\
&\quad+c|h|^{p(1-\beta)}\left(\int_{B_{R}}\left|D(u-\psi)(x)\right|^\frac{np}{n-2\alpha}dx\right)^\frac{n-2\alpha}{n}+c|h|^{2\alpha(1-\beta)}\left[\int_{B_{2R}}g^{\frac{n}{\alpha}}(x)dx\right.\notag\\
&\left.\quad\quad+ 1+\int_{B_{2R}}\left|D\psi(x)\right|^\frac{np}{n-2\alpha}dx+\left(\int_{B_{2R}}\left|Du(x)\right|^pdx\right)^\frac{n}{n-2\alpha}\right]\notag\\
&\quad+c\int_{B_R}\frac{\left|\tau_hV_p(D\psi(x))\right|^2}{|h|^{2\alpha}}dx+c|h|^{(1-\beta)(\alpha+1)}\left[\left(\int_{B_{2R}}g^\frac{n}{\alpha}(x)dx\right)^{\frac{1}{2}}\right.\notag\\
&\left.\quad\quad+ 1+\int_{B_{2\lambda R}}\left|D\psi(x)\right|^\frac{np}{n-2\alpha}dx+\left(\int_{B_{2\lambda R}}\left|Du(x)\right|^pdx\right)^\frac{n}{n-2\alpha}\right].
\end{align}
 
 By Lemma \ref{lemmapreliminare}, the hypothesis $V_p(D\psi)\in B^{\gamma}_{2, \infty, \mathrm{loc}}(\Omega)$ implies that $D\psi\in B^{\gamma}_{p, \infty, \mathrm{loc}}(\Omega)$, and since $0<\alpha<\gamma<1$, by Lemma \ref{EP2.3}, $V_p(D\psi)\in B^{\alpha}_{2, \infty, \mathrm{loc}}(\Omega)$ and $D\psi\in B^{\alpha}_{p, \infty, \mathrm{loc}}(\Omega)$.\\
 
 So we can take the supremum for $h\in B_{\frac{R}{4}}(0)$ at the both sides of \eqref{full_c'''}, thus getting
 
 \begin{align}\label{full_c*}
 &\left[V_p\left(Du\right)\right]_{\dot{B}^{\alpha\beta}_{2,\infty}\left(B_{\frac{R}{2}}\right)}\le C\left[V_p\left(D\psi\right)\right]_{\dot{B}^\alpha_{2,\infty}\left(B_{R}\right)}\notag\\
 &\quad+C\left[1+\int_{B_{2R}}g^{\frac{n}{\alpha}}(x)dx+\int_{B_{2\lambda R}}\left|D\psi(x)\right|^\frac{np}{n-2\alpha}dx+\left(\int_{B_{2\lambda R}}\left|Du(x)\right|^pdx\right)^\frac{n}{n-2\alpha}\right]^\sigma,
 \end{align}
 
 where the exponent $\sigma>0$ depends on $n, p$ and $\alpha$ and the constant $c>0$ depends on $n, p, \alpha, \nu, L$, and $R$.\\
 Recalling the definition of the norm in Besov-Lipschitz spaces and using Lemma \ref{EP2.3}, we have 
 
 \begin{align}\label{conclusion3}
 &\left[V_p\left(Du\right)\right]_{\dot{B}^{\alpha\beta}_{2,\infty}\left(B_{\frac{R}{2}}\right)}\le C\left\Arrowvert V_p\left(D\psi\right)\right\Arrowvert_{B^\gamma_{2,\infty}\left(B_{R}\right)}\notag\\
 &\quad+C\left[1+\int_{B_{2R}}g^{\frac{n}{\alpha}}(x)dx+ \int_{B_{2\lambda R}}\left|D\psi(x)\right|^\frac{np}{n-2\alpha}dx+\left(\int_{B_{2\lambda R}}\left|Du(x)\right|^pdx\right)^\frac{n}{n-2\alpha}\right]^\sigma.
 \end{align}
 
Recalling that, for $0<\alpha<\gamma<1$, we have $p<\frac{np}{n-2\alpha}<\frac{np}{n-2\gamma}$, we get 

\begin{align}\label{estimate3pf*}
\left[V_p\left(Du\right)\right]_{\dot{B}^{\alpha\beta}_{2,\infty}\left(B_{\frac{R}{2}}\right)}\le& C\left(1+\left\Arrowvert Du\right\Arrowvert_{L^p(B_{2\lambda R})}+\left\Arrowvert D\psi\right\Arrowvert_{L^\frac{np}{n-2\gamma}(B_{2\lambda R})}\right.\notag\\
&\left.\quad+\left\Arrowvert V_p\left(D\psi\right)\right\Arrowvert_{B^\gamma_{2,\infty}(B_{R})}+\left\Arrowvert g\right\Arrowvert_{L^\frac{n}{\alpha}\left(B_{2R}\right)}\right)^\sigma,
\end{align}

and applying Lemma \ref{Lemma2.9EP} to the function $V_p\left(D\psi\right)$, we get

\begin{align}\label{estimate3pf}
\left[V_p\left(Du\right)\right]_{\dot{B}^{\alpha\beta}_{2,\infty}\left(B_{\frac{R}{2}}\right)}\le& C\left(1+\left\Arrowvert Du\right\Arrowvert_{L^p(B_{4R})}+\left\Arrowvert V_p\left(D\psi\right)\right\Arrowvert_{B^\gamma_{2,\infty}(B_{4R})}\right.\notag\\
&\left.\quad+\left\Arrowvert g\right\Arrowvert_{L^\frac{n}{\alpha}\left(B_{2R}\right)}\right)^\sigma,
\end{align}

that is \eqref{estimate3}.
\end{proof}

\end{document}